\newcommand{\llambda}{\log\lambda}
\newtheorem{theorem}{Theorem}
\newtheorem{proposition}[theorem]{Proposition}
\newtheorem{lemma}[theorem]{Lemma}
\theoremstyle{definition}
\newtheorem{remark}[theorem]{Remark}
\newtheorem{conjecture}{Conjecture}
\newtheorem*{ackno}{Acknowledgement}
\numberwithin{equation}{section}
\numberwithin{theorem}{section}
\author[A. S. Feigenbaum]{Ahram S. Feigenbaum\textsuperscript{\dag}}
\address[AF\&DH]{Department of Mathematics, Vanderbilt University,
  1326 Stevenson Center, Nashville, TN 37240, USA}
\email{ahram.s.feigenbaum@Vanderbilt.Edu}
\email{doug.hardin@Vanderbilt.Edu}
\author[P. J. Grabner]{Peter J. Grabner\textsuperscript{*}}
\address[PG]{Institute of Analysis and Number Theory,
  Graz University of Technology,
  Kopernikusgasse 24.
8010 Graz,
Austria}
\email{peter.grabner@tugraz.at}
\thanks{\textsuperscript{*}This author is supported by the Austrian Science Fund
  FWF   project F5503 part of the Special Research Program
  (SFB) ``Quasi-Monte Carlo Methods: Theory and Applications''}
 \thanks{\textsuperscript{\dag}The research of these authors was supported, in part,
by the U. S. National Science Foundation under grant DMS-1516400.}
\author[D. P. Hardin]{Douglas P. Hardin\textsuperscript{\dag}}
\begin{document}
\title[Fourier eigenfunctions]
{Eigenfunctions of the Fourier Transform with specified zeros}
\date{\today}
\maketitle

\section{Introduction}\label{sec:intro}
The sphere packing problem is one of those mathematical problems which are
easy to state and notoriously difficult to solve. A sphere packing
$\mathcal{P}$ of $\mathbb{R}^d$ is a collection of congruent non-overlapping
balls. Its (upper) density
\begin{equation*}
  \Delta(\mathcal{P})=\limsup_{r\to\infty}\frac{\mathrm{vol}_d
    \left(\mathcal{P}\cap
      B(\mathbf{0},r)\right)}{\mathrm{vol}_d\left(B(\mathbf{0},r)\right)}
\end{equation*}
is the amount of space covered by $\mathcal{P}$ (here $B(\mathbf{0},r)$ denotes
the euclidean ball of radius $r$ centered at $\mathbf{0}$; $\mathrm{vol}_d$ is
the $d$-dimensional Lebesgue measure). The sphere packing problem asks for the
maximal value of $\Delta(\mathcal{P})$ and for which packing it is
attained. Until 2017 the answer was only known for dimensions $1$, $2$
(see~\cite{Fejes1943:uber_dichteste_kugellagerung}), and $3$. In 1611 Kepler
conjectured that no sphere packing in $\mathbb{R}^3$ has density greater than
$\pi\sqrt{2}/6$ which is the density of the \emph{face-centered cubic} lattice
and the \emph{hexagonal close packing}. The proof of the Kepler conjecture by
Hales \cite{Hales2005:proof_kepler_conjecture} was a major achievement and an
endpoint of a long development (for a historical exposition
see~\cite{Aste_Weaire2008:pursuit_perfect_packing}).

The solution of the sphere packing problem in dimension $8$ in March 2016 by
Viazovska \cite{Viazovska2017:sphere_packing_8} and soon after in dimension
$24$ by Cohn, Kumar, Miller, Radchenko, and Viazovska
\cite{Cohn-Kumar-Miller+2017:sphere_packing_24} brought an enormous
breakthrough in the application of linear programming techniques.  For a
comprehensive overview of the proof and more background information we refer to
\cite{Cohn2017:conceptual_breakthrough_packing,
  Laat_Vallentin2016:breakthrough_sphere_packing}.

The proof was based on earlier work by Cohn and Elkies
\cite{Cohn_Elkies2003:new_upper_bounds,Cohn2002:new_upper_bounds}, who provided
the underlying linear programming technique. More precisely, the following
result was proved there.
\begin{theorem}[Theorem~3.2 in \cite{Cohn_Elkies2003:new_upper_bounds}]
  Suppose $f : \mathbb{R}^d \to\mathbb{R}$ is an admissible function satisfying
  the following three conditions for some $r>0$:
  \begin{enumerate}
  \item $f(\mathbf{0})=\widehat{f}(\mathbf{0})>0$,
  \item $f(\mathbf{x})\leq0$ for $\|\mathbf{x}\|\geq r$
  \item $\widehat{f}(\mathbf{t})\geq0$ for all $\mathbf{t}\in\mathbb{R}^d$.
  \end{enumerate}
  Then the density of sphere packings in $\mathbb{R}^d$ is bounded above by
  $(r/2)^dB_d$, where $B_d$ denotes the volume of the $d$-dimensional unit
  ball.
\end{theorem}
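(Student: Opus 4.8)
The plan is to combine the Poisson summation formula with a standard reduction to periodic packings. First I would recall that the supremal packing density $\Delta$ in $\mathbb{R}^d$ can be approximated arbitrarily well by periodic packings, that is, by finite unions $\bigcup_{j=1}^N(\Lambda+v_j)$ of translates of a full-rank lattice $\Lambda$. For such a packing of balls of radius $r/2$ the density equals $N(r/2)^dB_d/|\Lambda|$, where $|\Lambda|$ denotes the covolume of $\Lambda$; hence it suffices to prove $N\le|\Lambda|$ whenever the centers are pairwise at distance at least $r$.

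The heart of the argument is to evaluate the double sum
\[
  S=\sum_{j,k=1}^N\sum_{x\in\Lambda}f(x+v_j-v_k)
\]
in two complementary ways. On one side, the admissibility of $f$ supplies the decay needed for Poisson summation over $\Lambda$, giving
\[
  S=\frac{1}{|\Lambda|}\sum_{t\in\Lambda^*}\widehat f(t)
  \Bigl|\sum_{j=1}^N e^{2\pi i\langle t,v_j\rangle}\Bigr|^2,
\]
with $\Lambda^*$ the dual lattice. On the other side, I would bound $S$ directly from its defining expression.

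For the upper bound, the only vanishing argument $x+v_j-v_k=\mathbf0$ occurs when $j=k$ and $x=\mathbf0$, contributing $Nf(\mathbf0)$; every remaining term satisfies $\|x+v_j-v_k\|\ge r$ and is therefore $\le0$ by hypothesis~(2), so $S\le Nf(\mathbf0)$. For the lower bound, hypothesis~(3) makes each summand on the spectral side nonnegative, and keeping only the $t=\mathbf0$ term gives $S\ge\widehat f(\mathbf0)N^2/|\Lambda|$. Combining the two estimates with $f(\mathbf0)=\widehat f(\mathbf0)>0$ yields $Nf(\mathbf0)\ge f(\mathbf0)N^2/|\Lambda|$, whence $N\le|\Lambda|$ and therefore $\Delta\le(r/2)^dB_d$.

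The principal difficulty is analytic rather than combinatorial: one must confirm that the definition of admissibility furnishes sufficient decay of both $f$ and $\widehat f$ to justify the interchange of summations in Poisson summation and the term-by-term positivity estimates, and one must secure the reduction from an arbitrary packing to a periodic one so that no density is lost in passing to the limit.
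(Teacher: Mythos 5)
Your argument is correct and is precisely the Poisson-summation proof of Cohn and Elkies that the paper cites rather than reproves (the paper only remarks that admissibility guarantees Poisson summation, which "plays a central role in the proof"). Your reduction to periodic packings, the two-sided evaluation of the double sum $S$, and the conclusion $N\le|\Lambda|$ reproduce the original argument faithfully.
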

A function $f$ is admissible, if there exists a constant $\delta>0$ such that
$|f(\mathbf{x})|$ and $|\widehat{f}(\mathbf{x})|$ are bounded above by a
constant times $(1+\|\mathbf{x}\|)^{-d-\delta}$. The admissibility condition
implies the validity the Poisson summation formula for $f$ which plays a
central role in the proof of the above result and further shows that the bound
is attained for the lattice packing
\begin{equation*}
  \mathcal{P}=\bigcup_{\mathbf{x}\in\Lambda}B\left(\mathbf{x},\frac r2\right)
\end{equation*}
for a lattice $\Lambda$, if and only if $f(\mathbf{x})=0$ for all
$\mathbf{x}\in\Lambda\setminus\{\mathbf{0}\}$ and $\widehat{f}(\mathbf{t})=0$
for all $\mathbf{t}\in\Lambda^*\setminus\{\mathbf{0}\}$, where $\Lambda^*$
denotes the dual lattice of $\Lambda$, and $r$ is the minimal distance of
$\Lambda$.

Of course, Schwartz functions
are admissible. It is an important feature of the space of real valued radial
Schwartz functions that every element $f$ can be written as
$f=f_++f_-$, where $\widehat{f}_+=f_+$ and $\widehat{f}_-=-f_-$ are
eigenfunctions of the Fourier transform. This is one of the key ingredients in
the construction of functions $f$ satisfying the assumptions of the theorem.

The functions constructed in
\cite{Cohn-Kumar-Miller+2017:sphere_packing_24,Viazovska2017:sphere_packing_8}
were the first examples of Fourier eigenfunctions with prescribed double zeros
at the distances of a lattice. The construction was based on Laplace transforms
of certain weakly holomorphic modular forms and quasimodular forms. The aim of
the present paper is to provide a unifying view on the modular forms behind
these constructions, to construct the Fourier eigenfunctions for all dimensions
divisible by $4$, and to show that the underlying modular and quasimodular
forms are uniquely determined by the requirements that their transform should
be an eigenfunction of the Fourier transform belonging to the Schwartz class.

Bourgain, Clozel, and Kahane
\cite{Bourgain_Clozel_Kahane2010:principe_dheisenberg} studied an uncertainty
principle for the last sign change of even functions and their Fourier
transforms on $\mathbb{R}$. 
More precisely, for dimension $d\geq1$ let $\mathcal{A}_+(d)$ denote the set of
functions $f:\mathbb{R}^d\to\mathbb{R}$ satisfying
\begin{enumerate}
\item $f,\widehat{f}\in L^1(\mathbb{R}^d)$ and $\widehat{f}$
      real valued, thus $f$ is even
\item $f$ is eventually non-negative, while $\widehat{f}(\mathbf{0})\leq0$
\item $\widehat{f}$ is eventually non-negative, while $f(\mathbf{0})\leq0$.
\end{enumerate}
Similarly, denote by $\mathcal{A}_-(d)$ denote the set of functions
$f:\mathbb{R}^d\to\mathbb{R}$ satisfying
\begin{enumerate}
\item $f,\widehat{f}\in L^1(\mathbb{R}^d)$ and $\widehat{f}$
      real valued, thus $f$ is even
\item $f$ is eventually non-negative, while $\widehat{f}(\mathbf{0})\leq0$
\item $\widehat{f}$ is eventually non-positive, while $f(\mathbf{0})\geq0$.
\end{enumerate}
For such functions define
\begin{equation*}
  r(f)=\inf\left\{R\geq0\mid f(\mathbf{x})\text{ has the same sign for }
  \|\mathbf{x}\|\geq R\right\}
\end{equation*}
and set
\begin{equation}
  \label{eq:A+A-}
  \begin{split}
    A_+(d)&=\inf_{f\in\mathcal{A}_+(d)}\sqrt{r(f)r(\widehat{f})}\\
    A_-(d)&=\inf_{f\in\mathcal{A}_-(d)}\sqrt{r(f)r(\widehat{f})}.
  \end{split}
\end{equation}
This question was originally motivated by the study of zeta functions of number
fields which have a real zero between $0$ and $1$.

Gon\c{c}alves, Oliveira e Silva, and Steinerberger
\cite{Goncalves_Oliveira_Steinerberger2017:hermite_polynomials_linear} studied
the problem further and proved that the extremal functions for the above
properties are eigenfunctions of the Fourier transform. Cohn and Gon\c{c}alves
\cite{Cohn_Goncalves2019:optimal_uncertainty_principle} used a construction
similar to the ones in
\cite{Cohn-Kumar-Miller+2017:sphere_packing_24,Viazovska2017:sphere_packing_8}
to provide the optimal function for the above uncertainty principle in
dimension $12$. They found $A_+(12)=\sqrt2$. Exact values of $A_+(d)$ and
$A_-(d)$ are known only for very few dimensions
(see~\cite{Cohn_Goncalves2019:optimal_uncertainty_principle}).

A point configuration $\mathcal{C}\subset\mathbb{R}^d$ is said to have density
$\rho$, if
\begin{equation*}
  \rho=\lim_{r\to\infty}\frac{\#(\mathcal{C}\cap B(\mathbf{0},r))}
  {\mathrm{vol}_d(B(\mathbf{0},r))},
\end{equation*}
meaning that $\mathcal{C}$ contains $\rho$ points per unit volume. For a
completely monotone function $p:(0,\infty)\to\mathbb{R}$ the $p$-energy of
$\mathcal{C}$ is given by
\begin{equation*}
  E_p(\mathcal{C})=\liminf_{r\to\infty}
  \frac1{\#\left(\mathcal{C}\cap B(\mathbf{0},r)\right)}
  \sum\limits_{\substack{\mathbf{x},\mathbf{y}\in\mathcal{C}\cap
     B(\mathbf{0},r)\\\mathbf{x}\neq\mathbf{y}}}p(\|\mathbf{x}-\mathbf{y}\|),
\end{equation*}
which can be viewed as a thermodynamic limit of the sum of all mutual
$p$-interactions of distinct points in $\mathcal{C}$. A configuration
$\mathcal{C}$ of density $\rho$ is called \emph{universally optimal}, if
it minimises $E_p(\cdot)$ amongst all configurations of density $\rho$ and for
all completely monotone functions $p$ simultaneously. Such configurations seem
to exist only for special values of the dimension; only very few examples are
known.

Radchenko and Viazovska \cite{Radchenko_Viazovska2019:fourier_interpolation}
proved a remarkable interpolation theorem for functions on the real line, with
prescribed values of $f$ and $\widehat f$ in the points $\pm\sqrt{n}$
($n\in\mathbb{N}_0$). This idea was taken further by Cohn, Kumar, Miller, Radchenko,
and Viazovska \cite{Cohn_Kumar_Miller+2019:universal_optimality} in their proof
of universal optimality of the $E_8$ and Leech lattices in respective
dimensions $8$ and $24$.  The main ingredient of their proof is an
interpolation formula for radial Schwartz functions in these dimensions, which
allows to interpolate values and first derivatives of $f$ and $\hat f$ in the
points $\sqrt{2n}$ ($n\in\mathbb{N}$).

As we were completing this manuscript we became aware of the work by Rolen and
Wagner~\cite{Rolen_Wagner2020:note_schwartz_functions}, who studied similar
questions for dimensions divisible by $8$. They were focused on applications
for proving packing bounds in these dimensions. These bounds turn out to be
asymptotically weaker than the bounds known from work of Kabatjanski\u\i{} and
Leven\v{s}te\u\i{}n \cite{Kabatjanskii_Levenstein1978:bounds_for_packings}. Our
paper gives more explicit results especially for the underlying modular and
quasimodular functions, in particular we find recurrence relations defining
these functions.

In this paper the dimension $d$ will always be a multiple of $4$.  It is
organised as follows. In Section~\ref{sec:infra} we provide a general study of
functions of the form
\begin{equation}\label{eq:Us-Laplace}
  U(s)=4i\sin\left(\frac\pi2s\right)^2\int_0^{i\infty}\psi(z)e^{i\pi sz}\,dz,\end{equation}
  for a class of functions $\psi$.  
Notice that the integral can be viewed as a Laplace transform after
replacing $z=it$ for $t>0$. We study the analytic continuation of such
functions, which is already given in Viazovska's work
\cite{Viazovska2017:sphere_packing_8}. In Proposition~\ref{prop-Feval} we
formulate conditions on $\psi$ so that the function $U(\|\mathbf{x}\|^2)$ is an
eigenfunction of the Fourier transform. These conditions turn out to be
functional equations for $\psi$ and conditions on the asymptotic behaviour of
$\psi(z)$ for $z\to0$ and $z\to i\infty$. Our main aim is to find the function
$\psi$ so that the last sign change of $U$ is as small as possible. This is
motivated by the choice of functions in
\cite{Cohn-Kumar-Miller+2017:sphere_packing_24,
  Viazovska2017:sphere_packing_8}, as well as by the uncertainty principle
\cite{Bourgain_Clozel_Kahane2010:principe_dheisenberg} mentioned above.

In Section~\ref{sec:even-eigen} we study the set of solutions of the functional
equations given in Section~\ref{sec:infra} for the case of the eigenvalue
$(-1)^{\frac d4}$. We show that the solutions are weakly holomorphic
quasimodular forms of weight $4-\frac d2$ and depth $2$. In order for the
function $U(\|\mathbf{x}\|^2)$ to have the desired properties, we find
conditions on these forms and show that these are uniquely satisfied.

In Section~\ref{sec:odd} the solutions of the functional equations from
Section~\ref{sec:infra} for the case of the eigenvalue $(-1)^{\frac d4+1}$ are
investigated. These turn out to be weakly holomorphic modular forms of weight $2-\frac d2$
for $\Gamma(2)$, a principal congruence subgroup of
$\mathrm{SL}(2,\mathbb{Z})$. Again we characterise the functions $\psi$ that
yield the desired properties for the function $U(\|\mathbf{x}\|^2)$.

In Section~\ref{sec:modular-diff-equat} we find differential equations
satisfied by the forms obtained in Sections~\ref{sec:even-eigen}
and~\ref{sec:odd} and characterise them as certain solutions. Differential
equations turn out to be a convenient method to control vanishing orders of
quasimodular forms. As a byproduct we find linear recurrences for the forms.

In Section~\ref{sec:posit-coeff} we prove that all but possibly finitely many
Fourier coefficients of the quasimodular forms obtained in
Section~\ref{sec:even-eigen} are positive.

In Section~\ref{sec:concluding-remarks} we discuss the modular and
quasimodular forms obtained in Sections~\ref{sec:even-eigen} and~\ref{sec:odd}
for several small dimensions, where the corresponding Fourier eigenfunction
exhibits remarkable behaviour. These cases of course include dimensions $8$,
$12$, and $24$.

In Appendix~\ref{sec:modular} we provide some basic information on modular
functions and forms, as well as quasimodular forms and derivatives of modular
forms, that are needed for Sections~\ref{sec:even-eigen} and~\ref{sec:odd}.

\subsection*{Notation}\label{sec:notation}
Throughout this paper we will use notations that are common in the context of
modular forms. Especially, we denote the two generators of the modular group
$\Gamma=\mathrm{PSL}(2,\mathbb{Z})$ by
\begin{align*}
  &S:z\mapsto -\frac1z\\
  &T:z\mapsto z+1.
\end{align*}
Furthermore, we denote $q=e^{2\pi iz}$, the \emph{nome}, and use a slightly
modified notation for derivatives
\begin{equation*}
f'=\frac1{2\pi i}\frac{df}{dz}=q\frac{df}{dq}.
\end{equation*}
We will freely switch between dependence on $z$ and $q$. The Landau symbol
$\mathcal{O}(q^\ell)$ is always understood for $z\to i\infty$ which is $q\to0$.

In this paper we use the following notation for the Fourier transform of a
function $f\in L^1(\mathbb{R}^d)$:
\begin{equation}
  \label{eq:fourier-def}
  \mathcal{F}(f)(\mathbf{t})=\widehat{f}(\mathbf{t}):=
  \int_{\mathbb{R}^d}f(\mathbf{x})e^{-2\pi
    i\langle\mathbf{x},\mathbf{t}\rangle}\,
  dx_1\cdots dx_d,
\end{equation}
where $\langle\mathbf{x},\mathbf{t}\rangle$ denotes the standard scalar product
in $\mathbb{R}^d$. With this setting we have
\begin{equation*}
  \mathcal{F}\left(e^{\pi i z\|\mathbf{x}\|^2}\right)
  =(-1)^{d/4}z^{-\frac d2}e^{\pi iSz \|\mathbf{t}\|^2}
\end{equation*}
for the Fourier transform of a Gaussian for $\Im z>0$. Here and throughout this
paper $\|\mathbf{x}\|^2=\langle\mathbf{x},\mathbf{x}\rangle$ denotes the
euclidean norm.


\section{Laplace transforms and Fourier
   eigenfunctions}\label{sec:infra}
 In this section we give a general study of functions given in the form
 \eqref{eq:Us-Laplace}. This representation is one of the key ingredient of
 Viazovska's construction of eigenfunctions of the Fourier transform. We
 analyse these functions in some detail and provide their analytic continuation
 to a right half-plane containing the imaginary axis. After these preparations
 we compute the Fourier transform of the function $U(\|\mathbf{x}\|^2)$ and use
 this to obtain necessary and sufficient conditions for this function to be an
 eigenfunction of the Fourier transform (Proposition~\ref{prop-Feval}). This is
 the starting point for the considerations in Sections~\ref{sec:even-eigen}
 and~\ref{sec:odd}. We denote the non-negative imaginary axis by
 $i\mathbb{R}_+:=i(0,\infty)$ and let $L^1_{\rm loc}(i\mathbb{R}_+)$ denote the
 complex valued functions that are absolutely integrable with respect to
 Lebesgue measure on any bounded interval $i(0,b]$.
 \begin{proposition}\label{prop-psiW}
   Suppose $\psi\in L^1_{\rm loc}(i\mathbb{R}_+)$ is such that for some $C>0$
   and constants $a_k$, $b_k\in \mathbb{C}$, $k=0,1,\ldots,n$,
   \begin{equation}\label{eq:psi-infty} \psi(z)=\sum_{k=0}^n a_ke^{-2\pi
       ikz}-iz\sum_{k=0}^n b_k e^{-2\pi ikz} +\mathcal{O}(e^{iCz})
     \quad\text{as }z\to i\infty.
  \end{equation} 
  For $\Re(s)>2n$, let
  \begin{equation}
    \label{eq:W-int}
    W(s):=-i\int_0^{i\infty}\psi(z)e^{i\pi sz}\,dz.
  \end{equation}
Then 
 \begin{equation}\label{eq:W}
 \begin{split}
   &W(s)=\sum_{k=0}^n \left(\frac{a_k}{\pi(s-2k)}+
     \frac{b_k}{\pi^2(s-2k)^2}\right)\\&-i\int_0^{i\infty} \left( \psi (z)-
     \left(\sum_{k=0}^n a_ke^{-2\pi ikz}+z\sum_{k=0}^n b_k e^{-2\pi ikz}
     \right)\right)e^{i\pi sz}\,dz.
  \end{split}
  \end{equation}
  gives an analytic continuation of $W$ to the half-plane  $\Re{(s)}> -C/\pi$.  
  \end{proposition}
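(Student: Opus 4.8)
The plan is to read \eqref{eq:W-int} as a Laplace transform by substituting $z=it$, $t\in(0,\infty)$, which turns $-i\int_0^{i\infty}\psi(z)e^{i\pi sz}\,dz$ into $\int_0^\infty\psi(it)e^{-\pi st}\,dt$. Under \eqref{eq:psi-infty} each term $e^{-2\pi ikz}$ becomes $e^{2\pi kt}$, so the integrand grows like $e^{(2\pi k-\pi s)t}$; the fastest growth is governed by $k=n$, and the integral converges at $t=\infty$ exactly when $\Re(s)>2n$, while near $t=0^+$ convergence is guaranteed by $\psi\in L^1_{\rm loc}(i\mathbb{R}_+)$ together with the boundedness of $e^{-\pi st}$. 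This confirms that $W$ is well defined and holomorphic on $\Re(s)>2n$.

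First I would write $\psi=P+R$, where $P(z)$ is the explicit principal part occurring in \eqref{eq:psi-infty} (the finite combination of the $e^{-2\pi ikz}$ and the $z\,e^{-2\pi ikz}$) and $R(z)=\psi(z)-P(z)=\mathcal{O}(e^{iCz})$. For $\Re(s)>2n$ both pieces are separately integrable, so $W(s)=-i\int_0^{i\infty}P\,e^{i\pi sz}\,dz-i\int_0^{i\infty}R\,e^{i\pi sz}\,dz$. The first integral is evaluated termwise using the elementary identities $\int_0^\infty e^{\alpha t}\,dt=-\alpha^{-1}$ and $\int_0^\infty t\,e^{\alpha t}\,dt=\alpha^{-2}$ with $\alpha=2\pi k-\pi s$ (valid for $\Re\alpha<0$), which produce exactly the simple poles $a_k/\bigl(\pi(s-2k)\bigr)$ and double poles $b_k/\bigl(\pi^2(s-2k)^2\bigr)$ appearing in \eqref{eq:W}; tracking the factors of $i$ here is precisely what fixes the coefficients of the $z$-linear part. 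Since these are rational functions of $s$, they define a meromorphic function on all of $\mathbb{C}$ whose only singularities lie at $s=2k$, $k=0,\dots,n$.

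It then remains to show that the remainder integral $-i\int_0^{i\infty}R\,e^{i\pi sz}\,dz$ is holomorphic on $\Re(s)>-C/\pi$. Passing again to $z=it$, the estimate $R(it)=\mathcal{O}(e^{-Ct})$ gives $|R(it)e^{-\pi st}|=\mathcal{O}\bigl(e^{-(C+\pi\Re s)t}\bigr)$, so the integral converges absolutely and locally uniformly in $s$ on every half-plane $\Re(s)\geq-C/\pi+\varepsilon$, while near $t=0^+$ the local integrability of $\psi$ (hence of $R$, since $P$ is bounded there) again suffices. Holomorphy on $\Re(s)>-C/\pi$ then follows either from Morera's theorem, interchanging the $t$- and contour integrals by Fubini, or by exhibiting the integral as a locally uniform limit of the entire truncated integrals $\int_\delta^{1/\delta}R(it)e^{-\pi st}\,dt$. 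Combining the two pieces, the right-hand side of \eqref{eq:W} is meromorphic on $\Re(s)>-C/\pi$ with poles only at $s=2k$, agrees with $W$ on the overlap $\Re(s)>2n$, and hence furnishes the asserted continuation by the identity theorem.

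The main obstacle I anticipate is the bookkeeping at the two ends of the contour: legitimately splitting $\int\psi=\int P+\int R$ requires that each piece converge on the common region $\Re(s)>2n$, and the holomorphy of the remainder term must be argued using only $\psi\in L^1_{\rm loc}(i\mathbb{R}_+)$ near the cusp $z=0$, where no decay is assumed. I would therefore keep the truncation at $t=\delta$ explicit, establishing the uniform bounds first and letting $\delta\to0$ only at the end.
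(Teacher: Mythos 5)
Your proposal is correct and follows essentially the same route as the paper: split $\psi$ into the explicit exponential--polynomial principal part plus an $\mathcal{O}(e^{iCz})$ remainder, evaluate the principal part termwise via the elementary Laplace integrals to produce the simple and double poles at $s=2k$, and observe that the remainder integral converges absolutely and locally uniformly for $\Re(s)>-C/\pi$, so the two expressions agree on $\Re(s)>2n$ and the right-hand side of \eqref{eq:W} gives the continuation. Your extra care with the $\delta$-truncation near $z=0$ and the factor-of-$i$ bookkeeping in the $z$-linear term is sensible but does not change the argument.
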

 \begin{proof}
   Let $\widetilde W(s)$ be given by the right-hand side of \eqref{eq:W}.  Then
   the local integrability of $\psi$ and the condition \eqref{eq:psi-infty}
   imply that $\widetilde W(s)$ is a well-defined meromorphic function on the
   half plane $\Re(s)>-\frac C\pi$ with (at most) double poles at $s=2k$,
   $k=0,\ldots,n$.  For an integer $k$ and $\Re(s)>2k$, elementary computations
   show
 $$
    -i\int_0^{i\infty} e^{-2\pi ikz}e^{i\pi sz}\,dz = \frac{1}{\pi(s-2k)},   $$
    and
    $$
   \quad -\int_0^{i\infty} z e^{-2\pi ikz} e^{i\pi sz}\,dz
    =\frac{1}{\pi^2(s-2k)^2},
$$
  and hence that $\widetilde W(s)$ agrees with $W(s)$ for $\Re(s)>2n$.
\end{proof}

We next assume that $\psi$ is holomorphic on the upper half-plane.
   
\begin{proposition}\label{prop-psiU}
  Let $\psi:\mathbb{H}\to \mathbb{C}$ be holomorphic on $\mathbb{H}$ and
  bounded on the angular region
  $R_{\alpha,\epsilon}:=\{re^{ it}:0<r<\epsilon, \, \alpha<t<\pi-\alpha\}$ for
  some $\epsilon>0$ and some $0<\alpha<\pi/4$.  Further suppose the restriction
  of $\psi$ to $i\mathbb{R}_+$ and $W$ are as in Proposition~\ref{prop-psiW}
  and for $\Re{(s)}> -C/\pi$ let $U(s)$ be defined by
 \begin{equation}\label{eq:F1}
  U(s):=-4\sin\left(\frac\pi2 s\right)^2W(s).
\end{equation}
Then $U(s)$ is holomorphic for  $\Re{(s)}> -C/\pi$ and 
  \begin{equation}
    \label{eq:U-deformed}
    \begin{split}
         &iU(s) = \int_{-1}^i\psi(Tz)e^{i\pi sz}\,dz 
      +\int_{1}^i\psi(T^{-1}z)e^{i\pi sz}\,dz\\&
       -2\int_0^i\psi(z)e^{i\pi sz}\,dz+
       \int_i^{i\infty}\left(\psi(Tz)-2\psi(z)+\psi(T^{-1}z)\right)
       e^{i\pi sz}\,dz  ,
    \end{split}
    \end{equation}
  where the integrals are along straight line segments joining the endpoints. 
\end{proposition}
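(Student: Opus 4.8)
The plan is to handle holomorphy first and then establish \eqref{eq:U-deformed} by a contour deformation that is legitimate for $\Re(s)$ large and then extended by analytic continuation. For the holomorphy of $U$, I would simply combine the conclusion of Proposition~\ref{prop-psiW} with the structure of the prefactor: there $W$ is meromorphic on $\Re(s)>-C/\pi$ with at most double poles at the even integers $s=2k$, $k=0,\ldots,n$. Since $\sin\left(\frac\pi2 s\right)$ vanishes to first order at every even integer, the factor $\sin\left(\frac\pi2 s\right)^2$ has a double zero at each such $s=2k$, exactly cancelling the double pole of $W$. Hence $U(s)=-4\sin\left(\frac\pi2 s\right)^2 W(s)$ is holomorphic throughout $\Re(s)>-C/\pi$.

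For the integral representation I would begin from the elementary identity $-4\sin\left(\frac\pi2 s\right)^2=e^{i\pi s}-2+e^{-i\pi s}$ and work first in the region $\Re(s)>2n$, where $W(s)=-i\int_0^{i\infty}\psi(z)e^{i\pi sz}\,dz$ converges. The factors $e^{i\pi s}$ and $e^{-i\pi s}$ combine with $e^{i\pi sz}$ to produce $e^{i\pi s(z+1)}$ and $e^{i\pi s(z-1)}$; the substitutions $w=z+1$ and $w=z-1$ move the contour to the vertical lines $\Re(w)=1$ and $\Re(w)=-1$ and turn $\psi(z)$ into $\psi(T^{-1}w)$ and $\psi(Tw)$, respectively (recall $Tw=w+1$). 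This gives
\begin{equation*}
  iU(s)=\int_{-1}^{-1+i\infty}\psi(Tw)e^{i\pi sw}\,dw+\int_{1}^{1+i\infty}\psi(T^{-1}w)e^{i\pi sw}\,dw-2\int_0^{i\infty}\psi(z)e^{i\pi sz}\,dz,
\end{equation*}
all three integrals running along vertical lines.

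The main step is then to deform each vertical contour to the bent path through $i$. For the first integral I would close the line $\Re(w)=-1$ (truncated at height $R$) against the bent path $-1\to i\to i\infty$ by a horizontal segment at height $R$, and apply Cauchy's theorem in the enclosed region, where $\psi(Tw)$ is holomorphic because $\Im(w)>0$ there. The horizontal segment contributes $\mathcal{O}(e^{(2\pi n-\pi\Re(s))R})\to0$ as $R\to\infty$ precisely because $\Re(s)>2n$ (the same threshold that makes the defining integral of $W$ converge). Near the corner $w=-1$ the image $Tw=t\sqrt2\,e^{i\pi/4}$ approaches the origin along the ray of angle $\pi/4$, which lies in the angular region $R_{\alpha,\epsilon}$ since $\alpha<\pi/4$; the assumed boundedness of $\psi$ there guarantees convergence of $\int_{-1}^i$ and lets me indent harmlessly around the corner. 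The analogous deformation of $\int_1^{1+i\infty}$ uses the image ray of angle $3\pi/4$ (again in $R_{\alpha,\epsilon}$ as $\alpha<\pi/4$), and $\int_0^{i\infty}$ is simply split at $i$. Splitting each deformed contour at $i$ and regrouping the three tails over $[i,i\infty)$ into $\psi(Tz)-2\psi(z)+\psi(T^{-1}z)$ yields exactly \eqref{eq:U-deformed}.

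Finally I would upgrade the identity from $\Re(s)>2n$ to all of $\Re(s)>-C/\pi$ by analytic continuation. The three segment integrals are entire in $s$, while the tail integral converges and is holomorphic for $\Re(s)>-C/\pi$ because the second difference annihilates both the $a_k e^{-2\pi ikz}$ and the $b_k z\,e^{-2\pi ikz}$ terms of \eqref{eq:psi-infty} (here one uses $k\in\mathbb{Z}$, so $e^{\pm2\pi ik}=1$), leaving $\psi(Tz)-2\psi(z)+\psi(T^{-1}z)=\mathcal{O}(e^{iCz})$. Since both sides of \eqref{eq:U-deformed} are then holomorphic on $\Re(s)>-C/\pi$ and coincide on $\Re(s)>2n$, they agree throughout. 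I expect the main obstacle to be the careful justification of pushing the contours past the corners at $\pm1$, which is exactly the point where the angular-boundedness hypothesis on $\psi$ is indispensable.
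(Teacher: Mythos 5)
Your proposal is correct and follows essentially the same route as the paper: the identity $-4\sin\left(\frac\pi2 s\right)^2=e^{i\pi s}-2+e^{-i\pi s}$, the shift to the three vertical contours, deformation through $i$ justified by holomorphy, the boundedness of $\psi$ on $R_{\alpha,\epsilon}$ near the corners $\pm1$, and the cancellation of the growth terms \eqref{eq:psi-infty} under the second difference to extend the tail integral to $\Re(s)>-C/\pi$. Your additional direct observation that the double zeros of $\sin\left(\frac\pi2 s\right)^2$ cancel the (at most) double poles of $W$ at $s=2k$ is a valid and slightly more explicit way to see the holomorphy of $U$ than the paper's reliance on the deformed representation.
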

\begin{proof} 
  Starting from \eqref{eq:W-int} we derive a second form of the analytic
  continuation of $-4\sin(\frac\pi2s)^2W(s)$, which is more suitable for the
  proof and will also be used later. We write
  \begin{align*}
    iU(s) &=   \int_0^{i\infty}\psi(z)\left(e^{i\pi s(z-1)}-2e^{i\pi sz}+
      e^{i\pi s(z+1)}\right)\,dz\\
    &= 
      \int_{-1}^{-1+i\infty}\psi(Tz)e^{i\pi sz}\,dz -
    2\int_0^{i\infty}\psi(z)e^{i\pi sz}\,dz \\&
    +
       \int_{1}^{1+i\infty}\psi(T^{-1}z)e^{i\pi sz}\,dz,
  \end{align*}
  which follows by expressing the sine in terms of the exponential, expanding
  the square and substituting in the integral. This expression is valid for
  $\Re(s)>2n$. Since $\psi$ is holomorphic on $\mathbb{H}$, bounded on
  $R_{\alpha,\epsilon}$ and satisfies the growth condition
  \eqref{eq:psi-infty}, we may deform the contours of integration as follows:
  the path from $-1$ to $-1+i\infty$ is deformed into a straight line from $-1$
  to $i$ and then along the imaginary axis from $i$ to $i\infty$; similarly,
  the contour from $1$ to $1+i\infty$ is deformed into a straight line from $1$
  to $i$ and then again along the imaginary axis (see
  Figure~\ref{fig:viazovska}).
  \begin{figure}
    \begin{centering}
      \includegraphics[height=6cm]{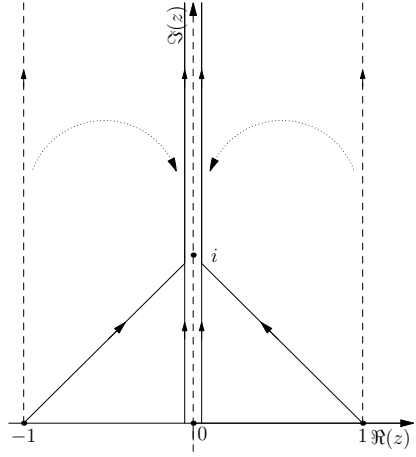}
    \end{centering}
    \caption{Deforming the contour of integration}\label{fig:viazovska}
  \end{figure}

  Collecting terms with matching paths of integration gives
  \eqref{eq:U-deformed} valid for $\Re(s)>2n$.  Since the exponential terms in
  the asymptotic expansion \eqref{eq:psi-infty} for $z\to i\infty$ cancel in
  the last integral, the new expression is also valid for $\Re(s)>-\frac C\pi$
  providing an alternative form for expressing the analytic continuation of
  $U(s)$. The integrals are all absolutely and uniformly convergent for
  $\Re(s)\geq 0$.
\end{proof}

\begin{proposition} \label{prop-FFhat} Let $\psi$ and $U$ be as in
  Proposition~\ref{prop-psiU}, and let $F:\mathbb{R}^d\to\mathbb{C}$ be defined
  by
\begin{equation}\label{Fdef}
F(\mathbf{x}):=U(\|\mathbf{x}\|^2), \qquad (\mathbf{x}\in \mathbb{R}^d).
\end{equation}
If, in addition,  $\psi$  satisfies
\begin{equation}
  \psi(z)=\mathcal{O}(e^{iCSz})\quad\text{as }z\to 0
  \quad\text{non-tangentially in }\mathbb{H},\label{eq:psi-0-exp}
\end{equation}
then $F$ is a Schwartz function and   can be written in the form
 \begin{equation}\label{eq:F-PS}
  \begin{split}
    F&(\mathbf{x}) =
 -i\left[\int_{-1}^i\psi(Tz)e^{i\pi \|\mathbf{x}\|^2z}\,dz \right.
      +\int_{1}^i\psi(T^{-1}z)e^{i\pi \|\mathbf{x}\|^2z}\,dz\\&
       \left.-2\int_0^i\psi(z)e^{i\pi \|\mathbf{x}\|^2z}\,dz+
     \int_i^{i\infty}\!\!\!\!\left(\psi(Tz)-2\psi(z)+\psi(T^{-1}z)\right)e^{i\pi
          \|\mathbf{x}\|^2z}\,dz  \right].
    \end{split}
  \end{equation}
Consequently, the Fourier transform of $F$ is given by
  \begin{equation}\label{eq:Fhat-PS}
  \begin{split}
    & \hat{F} (\mathbf{t}) =
 -i(-1)^{d/4}\left[\int_{-1}^i \psi(T^{-1}Sz)e^{i\pi \|\mathbf{t}\|^2z} z^{d/2-2}\,
   dz\right.
 \\&   +2\int_i^{i\infty}\psi(Sz)e^{i\pi \|\mathbf{t}\|^2z} z^{d/2-2}\, dz+
 \int_{1}^i \psi(TSz)e^{i\pi \|\mathbf{t}\|^2z} z^{d/2-2}\, dz \\
 &\left.-\int_{0}^i (\psi(T^{-1}Sz)-2\psi(Sz)+\psi(TSz))e^{i\pi \|\mathbf{t}\|^2z}
 z^{d/2-2}\, dz \right].  \end{split}
  \end{equation}
\end{proposition}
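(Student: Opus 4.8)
The plan is to establish the Schwartz property first, then derive the two integral representations in the order $F$ then $\widehat F$, using analytic continuation from a region where absolute convergence is transparent. First I would verify that $F(\mathbf x)=U(\|\mathbf x\|^2)$ is Schwartz. By Proposition~\ref{prop-psiU} the function $U(s)$ is holomorphic for $\Re(s)>-C/\pi$, so in particular $F$ is smooth on all of $\mathbb R^d$; the issue is decay of $F$ and all its derivatives, which reduces to controlling $U^{(j)}(s)$ as $s\to+\infty$ along the positive reals (where $s=\|\mathbf x\|^2$). From the form \eqref{eq:U-deformed}, the first three integrals run over bounded contours and the integrand decays like $e^{i\pi sz}$ with $\Im z>0$ bounded below on the relevant segments except near the real endpoints $\pm1$; there the new hypothesis \eqref{eq:psi-0-exp} controls $\psi$ near $0$ after applying $Tz,T^{-1}z$, and this is precisely where the non-tangential bound $\psi(z)=\mathcal O(e^{iCSz})$ is needed to guarantee integrability and rapid decay. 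The tail integral from $i$ to $i\infty$ decays exponentially in $s$ because of the $e^{i\pi sz}$ factor together with the cancellation of the polynomial-times-exponential terms noted in Proposition~\ref{prop-psiU}. Differentiating in $s$ brings down powers of $z$, which the exponential decay absorbs, so every derivative inherits the same rapid decay; this gives the Schwartz property.

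Next I would obtain \eqref{eq:F-PS} simply by substituting $s=\|\mathbf x\|^2$ into \eqref{eq:U-deformed} and multiplying by $-i$, which is immediate once the representation of $U$ is in hand and the resulting integrals are seen to converge absolutely and uniformly for $\mathbf x$ in compacts (again using \eqref{eq:psi-0-exp} at the endpoints and the growth at $i\infty$). The substantive computation is \eqref{eq:Fhat-PS}. The strategy is to compute $\widehat F$ termwise from \eqref{eq:F-PS} by interchanging the Fourier integral with the $z$-integral and using the Gaussian Fourier transform identity stated in the introduction, namely $\mathcal F(e^{\pi i z\|\mathbf x\|^2})=(-1)^{d/4}z^{-d/2}e^{\pi iSz\|\mathbf t\|^2}$ for $\Im z>0$. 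Each occurrence of $e^{i\pi\|\mathbf x\|^2 z}$ in \eqref{eq:F-PS} is a Gaussian in $\mathbf x$, so applying $\mathcal F$ and pulling it under the $z$-integral produces $(-1)^{d/4}z^{-d/2}e^{i\pi\|\mathbf t\|^2 Sz}$ in each term. The Schwartz property and uniform convergence justify the interchange (Fubini together with the absolute integrability already established).

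After this interchange each term becomes an integral in $z$ of the form $\int\psi(\cdot)\,z^{-d/2}e^{i\pi\|\mathbf t\|^2 Sz}\,dz$ along the image of the original contour under $S:z\mapsto-1/z$. The key bookkeeping step is the change of variables $z\mapsto Sz=-1/z$: under this substitution $dz\mapsto z^{-2}\,dz$, the factor $z^{-d/2}$ transforms so that the combined weight becomes $z^{d/2-2}$ as in \eqref{eq:Fhat-PS}, the Gaussian $e^{i\pi\|\mathbf t\|^2 Sz}$ becomes $e^{i\pi\|\mathbf t\|^2 z}$, and the argument of $\psi$ is composed with $S$. One must also track how $S$ maps the straight segments: $S$ sends the segment from $-1$ to $i$ to the segment from $-1$ to $i$ (since $S(-1)=-1$, $S(i)=i$), sends $1$ to $-1$, $S(1)=-1$, and maps $i\mathbb R_+$ to itself reversing orientation, while the vertical ray from $i$ to $i\infty$ maps to the segment from $i$ to $0$; the reversal of orientation on the imaginary pieces produces the factor $2$ and the sign in front of the last integral in \eqref{eq:Fhat-PS}, and composing the outer $T,T^{-1}$ with the substitution accounts for the arguments $T^{-1}Sz,\,TSz,\,Sz$. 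I expect the main obstacle to be exactly this contour-and-orientation accounting under $S$: matching each of the four terms of \eqref{eq:F-PS} to the correct term of \eqref{eq:Fhat-PS} with the right sign, multiplicity, and composed argument, and confirming that the deformed contours for $\widehat F$ coincide with those written. Establishing that the interchange of integration is legitimate is routine given the Schwartz bounds, so the real work is the careful geometric tracking of how $S$ acts on the four integration paths.
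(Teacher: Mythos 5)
Your overall strategy is the same as the paper's: establish the Schwartz property from the vanishing of $\psi$ at the cusp $0$ (the paper does this via the Laplace-transform representation \eqref{eq:W-int} rather than via \eqref{eq:U-deformed}, but both work), obtain \eqref{eq:F-PS} by substitution, and compute $\widehat F$ term by term via Fubini, the Gaussian identity, and the change of variables $z\mapsto Sz$. So the route is sound and matches the paper.

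However, your contour bookkeeping under $S$ — which you correctly identify as the real work — contains a concrete error: you assert $S(-1)=-1$, whereas $S(-1)=-1/(-1)=1$. Consequently $S$ does \emph{not} map the segment from $-1$ to $i$ to itself; it interchanges the two endpoints $-1$ and $1$ (while fixing $i$), so the term $\int_{-1}^i\psi(Tz)\,\cdots$ becomes, after substitution, an integral from $1$ to $i$ of $\psi(TSz)\,\cdots$, and the term $\int_{1}^i\psi(T^{-1}z)\,\cdots$ becomes an integral from $-1$ to $i$ of $\psi(T^{-1}Sz)\,\cdots$. This swap is exactly what makes the result agree with \eqref{eq:Fhat-PS}; with your stated values the $TS$ and $T^{-1}S$ terms would land on the wrong contours and the formula would not match. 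A second, minor point: the images under $S$ of the straight segments $[-1,i]$ and $[1,i]$ are circular arcs (e.g.\ the arc through $1$, $1+i$, $i$), not straight segments, so to write the integrals over the straight segments appearing in \eqref{eq:Fhat-PS} you need one further appeal to holomorphy of the integrand (Cauchy's theorem) to deform the arcs back to chords. Neither issue requires a new idea, but as written the key computation would not reproduce the stated formula.
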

\begin{proof}
  The representation \eqref{eq:F-PS} follows immediately from the definition
  \eqref{Fdef} and the relation \eqref{eq:U-deformed} of
  Proposition~\ref{prop-psiU}.  Condition \eqref{eq:psi-0-exp} implies that
  $\psi$ vanishes to arbitrary order at $z=0$.  Hence, using \eqref{eq:W-int}
  it follows using well known properties of the Laplace transform
  (see~\cite{Widder1941:laplace_transform}) that $F$ and its derivatives all
  decay faster than any negative power of $\|\mathbf{x}\|$. Since $U$ is
  analytic, it follows that $F$ is a Schwartz function.
  
Thus we can compute the Fourier transform of $F$ by Fubini's theorem
  \begin{align*}
    \widehat{F}(\mathbf{t})=
    &-i\left[
\int_{-1}^i\psi(Tz)e^{i\pi\|\mathbf{t}\|^2Sz}(-iz)^{-\frac d2}\,dz\right.\\
      &+\int_{1}^i\!\!\psi(T^{-1}z)e^{i\pi\|\mathbf{t}\|^2Sz}(-iz)^{-\frac d2}\,dz
       -2\int_0^i\!\!\psi(z)e^{i\pi\|\mathbf{t}\|^2Sz}(-iz)^{-\frac d2}\,dz\\&+
       \left. \int_i^{i\infty}\left(\psi(Tz)-2\psi(z)+\psi(T^{-1}z)\right)
         e^{i\pi\|\mathbf{t}\|^2Sz}(-iz)^{-\frac d2}\,dz
    \right].
  \end{align*}
Substituting $Sz$ in this expression and collecting signs gives
\eqref{eq:Fhat-PS}.
\end{proof}

\begin{proposition}\label{prop-Feval}
Let $\psi$  and  $F$ be as in
Proposition~\ref{prop-FFhat} and $\varepsilon\in \{-1,1\}$. 
Then 
 $\hat{F}  =\varepsilon(-1)^{\frac d4} F$ if and only if
 \begin{align}
     z^{\frac d2-2}\psi(T^{-1}Sz)&=\varepsilon\psi(Tz)\label{eq:psi1}\\
2z^{\frac d2-2}\psi(Sz)&=\varepsilon\left(\psi(Tz)-2\psi(z)+\psi(T^{-1}z)\right),
\label{eq:psi2} 
 \end{align}
 for all $z\in \mathbb{H}$.
\end{proposition}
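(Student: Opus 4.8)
The plan is to read off the two functional equations by comparing the contour representations \eqref{eq:F-PS} of $F$ and \eqref{eq:Fhat-PS} of $\widehat F$ contour by contour. Write $s$ for the squared radius $\|\cdot\|^2$, and note that since $4\mid d$ we have $(-1)^{d/2}=1$, so the asserted relation $\widehat F=\varepsilon(-1)^{d/4}F$ is equivalent to $(-1)^{d/4}\widehat F=\varepsilon F$. Both sides are then integrals of the common kernel $e^{i\pi s z}$ against holomorphic $\psi$-combinations over the four contours $[-1,i]$, $[1,i]$, $[0,i]$, $[i,i\infty]$. A bookkeeping remark makes the two sides comparable: because $d/2-2$ is an \emph{even} integer, $(Sz)^{d/2-2}=z^{2-d/2}$ with no sign ambiguity, so substituting $z\mapsto Sz$ in \eqref{eq:psi1} and using $S^2=\mathrm{id}$ yields the companion identity $z^{d/2-2}\psi(TSz)=\varepsilon\psi(T^{-1}z)$. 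Thus \eqref{eq:psi1} governs both the $[-1,i]$ and the $[1,i]$ contour, while \eqref{eq:psi2} governs $[i,i\infty]$.

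For the direction "$\Leftarrow$" I would substitute \eqref{eq:psi1}, its companion, and \eqref{eq:psi2} into \eqref{eq:Fhat-PS}. Equation \eqref{eq:psi1} rewrites the $[-1,i]$ integrand $z^{d/2-2}\psi(T^{-1}Sz)$ as $\varepsilon\psi(Tz)$, the companion rewrites the $[1,i]$ integrand $z^{d/2-2}\psi(TSz)$ as $\varepsilon\psi(T^{-1}z)$, and \eqref{eq:psi2} rewrites the $[i,i\infty]$ integrand $2z^{d/2-2}\psi(Sz)$ as $\varepsilon(\psi(Tz)-2\psi(z)+\psi(T^{-1}z))$; feeding all three into the $[0,i]$ integrand shows $(\psi(T^{-1}Sz)-2\psi(Sz)+\psi(TSz))z^{d/2-2}$ collapses to $2\varepsilon\psi(z)$. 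Term by term this is exactly $\varepsilon$ times the corresponding integrand of \eqref{eq:F-PS}, giving $(-1)^{d/4}\widehat F=\varepsilon F$.

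The substance is "$\Rightarrow$". Introduce the holomorphic defects on $\mathbb H$, namely $A(z)=z^{d/2-2}\psi(T^{-1}Sz)-\varepsilon\psi(Tz)$, $B(z)=z^{d/2-2}\psi(TSz)-\varepsilon\psi(T^{-1}z)$, and $P(z)=2z^{d/2-2}\psi(Sz)-\varepsilon(\psi(Tz)-2\psi(z)+\psi(T^{-1}z))$; their vanishing is precisely \eqref{eq:psi1}, the companion, and \eqref{eq:psi2}. Directly from the definitions one checks that the $[0,i]$ defect equals $E:=P-A-B$ and that $B(z)=-\varepsilon z^{d/2-2}A(Sz)$, so that $A\equiv0$ already forces $B\equiv0$. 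The hypothesis then says a sum of four convergent integrals vanishes, which I rearrange as
$$\int_i^{i\infty}P(z)e^{i\pi s z}\,dz=-\int_{-1}^iA(z)e^{i\pi s z}\,dz-\int_1^iB(z)e^{i\pi s z}\,dz-\int_0^iE(z)e^{i\pi s z}\,dz.$$
The right-hand side runs over finite contours lying in $\{\,0\le\Im z\le1\,\}$, hence defines an entire function of $s$ whose growth as $\Re s\to-\infty$ is at most $e^{\pi|\Re s|}$; the left-hand side, however, is a Laplace transform over the ray $[i,i\infty]$, a priori analytic only in a half-plane. By a Paley--Wiener argument the density $t\mapsto P(it)$ must vanish for $t>1$, and since $P$ is holomorphic on $\mathbb H$ the identity theorem gives $P\equiv0$, which is \eqref{eq:psi2}. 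Now $E=-(A+B)$, and letting $s\to+i\infty$ the only contour reaching $\Re z=-1$ is $[-1,i]$, so $\int_{-1}^iA\,e^{i\pi s z}dz$ is the sole source of growth of order $e^{\pi\Im s}$; as the total vanishes identically this growth must be absent, forcing $A$ to vanish to infinite order at $z=-1$ and hence $A\equiv0$, i.e.\ \eqref{eq:psi1}. Then $B\equiv0$ follows.

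I expect the main obstacle to be this last separation in "$\Rightarrow$": a single scalar identity (one equation per $s$) must be shown to encode \emph{two} functional equations. The leverage is that the four contours carry different analytic fingerprints, the imaginary ray giving a genuine half-plane Laplace transform and the finite segments giving entire functions of exponential type, so one first isolates $P$ by Paley--Wiener and then isolates $A$ by matching growth in a single direction of the $s$-plane. Care is required in verifying convergence and in justifying these asymptotics near the boundary point $z=0$, where $Sz,T^{\pm1}Sz\to i\infty$; here one uses that the second difference $\psi(Sz-1)-2\psi(Sz)+\psi(Sz+1)$ annihilates every polynomial-times-exponential term of \eqref{eq:psi-infty}, so that $E$ and $P$ decay and all integrals above converge.
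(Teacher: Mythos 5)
Your proposal is correct and follows essentially the same route as the paper: both arguments split $F-\varepsilon(-1)^{d/4}\widehat F$ into the four contour pieces, isolate the $[i,i\infty]$ defect by a Paley--Wiener/exponential-type argument to get \eqref{eq:psi2}, observe that the $[0,i]$ defect is the $z\mapsto Sz$ image of the others, and then separate the $[-1,i]$ and $[1,i]$ defects by their growth in opposite directions of the $s$-plane to get \eqref{eq:psi1}. The only (harmless) variations are that you eliminate $B$ via $B(z)=-\varepsilon z^{d/2-2}A(Sz)$ before the growth argument rather than after, and you phrase the directional growth as $s\to\pm i\infty$ where the paper writes $t\to\pm\infty$.
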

\begin{proof}
  We consider the auxiliary function
  \begin{equation}
    \label{eq:aux}
    H(s)=\sum_{\ell=1}^4I_\ell(s),
  \end{equation}
  where
  \begin{equation}
    \label{eq:I1-I4}
    \begin{split}
      I_1(s)&=\int_{i}^{i\infty}\left(\psi(Tz)-2\psi(z)+\psi(T^{-1}z)-
        2\varepsilon z^{d/2-2}\psi(Sz))\right)e^{i\pi sz}\,dz\\
      I_2(s)&=\int_0^i\left(\varepsilon
  \left(\psi(TSz)-2\psi(Sz)+\psi(T^{-1}Sz)\right)z^{d/2-2}-2\psi(z)\right)e^{i\pi
  sz}\,dz\\
I_3(s)&=\int_{-1}^i\left(\psi(Tz)-\varepsilon\psi(T^{-1}Sz)z^{d/2-2}\right)
e^{i\pi sz}\,dz\\
I_4(s)&=\int_{1}^i\left(\psi(T^{-1}z)-\varepsilon\psi(TSz)z^{d/2-2}\right)
e^{i\pi sz}\,dz.
    \end{split}
  \end{equation}
  Then
  $H(\|\mathbf{x}\|^2)=F(\mathbf{x})-\varepsilon(-1)^{\frac
    d4}\widehat{F}(\mathbf{x})$. If $F$ is an eigenfunction for the Fourier
  transform with eigenvalue $\varepsilon(-1)^{\frac d4}$, then $H(s)$ vanishes
  on the positive real axis and thus for all $s\in\mathbb{C}$.

  Now the functions $I_2,I_3,I_4$ are entire functions of exponential type
  $\pi$ (see~\cite{Boas1954:entire_functions}). Thus for $H(s)$ to vanish
  identically, $I_1(s)$ also has to be of exponential type. By the Paley-Wiener
  theorem (see\cite{Paley_Wiener1934:fourier_transformations}) this implies
  that the integrand defining $I_1$ has to have compact support. Since the
  integrand is analytic it has to vanish identically giving
    \begin{equation}
    \label{eq:psi2b}
    2z^{\frac d2-2}\psi(Sz)=\varepsilon
    \left(\psi(Tz)-2\psi(z)+\psi(T^{-1}z)\right).
  \end{equation}
Since the integrand in the definition of $I_2$ equals the
  integrand for $I_1$ after a substitution $z\mapsto Sz$, also $I_2$ vanishes
  identically.

  Now it remains $H(s)=I_3(s)+I_4(s)=0$. We observe that
  \begin{equation}
    \label{eq:lim}
    \begin{split}
      &\lim_{t\to-\infty}I_3(t)=\infty,\quad\lim_{t\to\infty}I_3(t)=0\\
      &\lim_{t\to+\infty}I_4(t)=\infty,\quad\lim_{t\to-\infty}I_4(t)=0.
    \end{split}
  \end{equation}
  Thus $H(s)=0$ can only hold, if $I_3$ and $I_4$ vanish identically. Again
  both integrals $I_3$ and $I_4$ can be rewritten as Laplace transforms after a
  change of variables. It follows that
    \begin{align}
    z^{\frac d2-2}\psi(T^{-1}Sz)&=  \varepsilon\psi(Tz)\label{eq:psi1a}\\
   z^{\frac d2-2}\psi(TSz)&= \varepsilon\psi(T^{-1}z)\label{eq:psi1b}.
 \end{align}
 
 It is immediate that \eqref{eq:psi1a} and \eqref{eq:psi1b} are equivalent by
 substituting $z\mapsto Sz$.
\end{proof}

\section{The $(-1)^{\frac d4}$ eigenfunction}
\label{sec:even-eigen}
In this section we first prove that the function $\psi$ in \eqref{eq:W-int} has
to be a quasimodular form in order to make $F$ given by \eqref{eq:F-PS} an
eigenfunction of the Fourier transform for the eigenvalue $(-1)^{\frac
  d4}$. The main ingredient for this fact is Proposition~\ref{prop-even} below,
which is essentially equivalent to
\cite[Proposition~4.7]{Cohn_Kumar_Miller+2019:universal_optimality} except for
different growth conditions on the function $\psi$. We developed this result
independently, and keep the proof in order to make the paper self-contained. In
Section~\ref{sec:determining-psi} we then use dimension arguments for the
underlying spaces of quasimodular forms to achieve the existence of such forms
that are useful for the construction of eigenfunctions with certain extremal
properties.

\begin{proposition}\label{prop-even}
  Let $\psi$ be as in Proposition~\ref{prop-psiU}. Then the corresponding
  function $F$ given by \eqref{Fdef} is an eigenfunction for the Fourier
  transform with eigenvalue $(-1)^{\frac d4}$, if and only if
  $z^{\frac d2-2}\psi(Sz)$ is a weakly holomorphic quasimodular form of weight $4-\frac d2$ and
  depth $2$. More precisely, there are weakly holomorphic modular forms
  $\psi_1$, $\psi_2$, $\psi_3$ of respective weights $4-\frac d2$,
  $2-\frac d2$, and $-\frac d2$ such that
  \begin{equation}
    \label{eq:psi-quasi}
    z^{\frac d2-2}\psi(Sz)=\psi_1(z)-2E_2(z)\psi_2(z)+E_2(z)^2\psi_3(z).
  \end{equation}
  This gives
  \begin{equation}
    \label{eq:psi-quasi-expl}
    \begin{split}
      \psi(z)&=z^2\left(\psi_1(z)-2E_2(z)\psi_2(z)+E_2(z)^2\psi_3(z)\right)\\
      &+ z\frac{12i}{\pi}\left(\psi_2(z)-E_2(z)\psi_3(z)\right)-
      \frac{36}{\pi^2}\psi_3(z).
    \end{split}
  \end{equation}
  Furthermore, $\psi_1$, $\psi_2$, and $\psi_3$ have to satisfy
  \begin{equation}
    \label{eq:psi-infty-even}
    \psi_1(z)-2E_2(z)\psi_2(z)+E_2(z)^2\psi_3(z)=\mathcal{O}(e^{2\pi iz})
  \end{equation}
for $z\to i\infty$ in order to fulfil \eqref{eq:psi-infty} and
\eqref{eq:psi-0-exp}.
\end{proposition}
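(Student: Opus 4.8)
The plan is to apply Proposition~\ref{prop-Feval} with $\varepsilon=1$, so that the eigenvalue equation $\widehat F=(-1)^{d/4}F$ becomes the pair of functional equations \eqref{eq:psi1} and \eqref{eq:psi2}, and then to show these are equivalent to the asserted quasimodularity of
$$g(z):=z^{\frac d2-2}\psi(Sz)=z^{\frac d2-2}\psi(-1/z).$$
Write $m=\tfrac d2-2$; since $4\mid d$ the exponent $m$ is even, so $(-1)^m=1$, a fact I will use repeatedly. From the definition of $g$ one has $\psi(-1/z)=z^{-m}g(z)$, and replacing $z$ by $-1/z$ gives the symmetric relation $\psi(z)=z^{m}g(-1/z)$. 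The goal is to read off from \eqref{eq:psi1}--\eqref{eq:psi2} that $g$ is $1$-periodic and that its $S$-transform is a degree-$2$ polynomial in $1/z$, which is exactly the defining behaviour of a quasimodular form of weight $4-\tfrac d2=2-m$ and depth at most $2$.

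First I would extract periodicity of $g$ from \eqref{eq:psi1}. Substituting $\psi(u)=u^{m}g(-1/u)$ into both sides of $z^{m}\psi(T^{-1}Sz)=\psi(Tz)$ and simplifying the powers of $z$ and $z+1$ (here $(-1)^m=1$ is essential) reduces \eqref{eq:psi1} to $g\!\left(\tfrac{z}{z+1}\right)=g\!\left(-\tfrac1{z+1}\right)$; since $\tfrac{z}{z+1}=T\!\left(-\tfrac1{z+1}\right)$ and $-1/(z+1)$ ranges over all of $\mathbb{H}$, this says precisely $g(z+1)=g(z)$. Equation \eqref{eq:psi2} is more direct: its left-hand side $2z^{m}\psi(Sz)$ is by definition $2g(z)$, so \eqref{eq:psi2} reads
$$\psi(z+1)-2\psi(z)+\psi(z-1)=2g(z).$$

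The core is then a second-difference computation. Writing $\mathcal{D}h(z):=h(z+1)-2h(z)+h(z-1)$, the periodicity of $g$ gives $\mathcal{D}(z^2g)=2g$, because the three shifted values of $g$ coincide and $(z+1)^2-2z^2+(z-1)^2=2$. Subtracting this from the displayed form of \eqref{eq:psi2} yields $\mathcal{D}(\psi-z^2g)=0$. A function holomorphic on $\mathbb{H}$ with vanishing second difference is affine-linear along the lattice direction, i.e. $\psi(z)-z^2g(z)=z\,c_1(z)+c_2(z)$ with $c_1,c_2$ holomorphic and $1$-periodic (obtained by telescoping the first-difference equation). Dividing by $z^2$ and using $\psi(z)=z^mg(-1/z)$ gives
$$z^{-(4-\frac d2)}g(-1/z)=g(z)+\frac{c_1(z)}{z}+\frac{c_2(z)}{z^2},$$
which, together with $g(z+1)=g(z)$ and the weak holomorphy supplied by the growth hypotheses, is exactly the characterisation (from Appendix~\ref{sec:modular}, using that $S,T$ generate $\Gamma$) of a weakly holomorphic quasimodular form of weight $4-\tfrac d2$ and depth at most $2$. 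The structure theorem then lets me write $g=\psi_1-2E_2\psi_2+E_2^2\psi_3$ with $\psi_1,\psi_2,\psi_3$ weakly holomorphic modular of weights $4-\tfrac d2,\,2-\tfrac d2,\,-\tfrac d2$, and a short computation with $E_2(-1/z)=z^2E_2(z)-\tfrac{6i}{\pi}z$ identifies $c_1=\tfrac{12i}{\pi}(\psi_2-E_2\psi_3)$ and $c_2=-\tfrac{36}{\pi^2}\psi_3$, giving \eqref{eq:psi-quasi-expl}. The converse direction runs these equivalences backwards: given such a $g$, the $E_2$-transformation reproduces the $S$-relation and periodicity reproduces \eqref{eq:psi1}--\eqref{eq:psi2}, so Proposition~\ref{prop-Feval} makes $F$ an eigenfunction.

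It remains to match the growth conditions, which I expect to be the main obstacle in bookkeeping rather than in ideas. Because $g(z)=z^{m}\psi(-1/z)$, the non-tangential decay \eqref{eq:psi-0-exp} of $\psi$ at $0$ is equivalent to $g$ vanishing at $i\infty$; as $g$ is weakly holomorphic and $1$-periodic this forces $g=\mathcal{O}(q)$, which is precisely \eqref{eq:psi-infty-even}. Conversely, once $g=\mathcal{O}(q)$ the representation $\psi(z)=z^2g(z)+z\,c_1(z)+c_2(z)$ shows that the $z^2$-term decays, so the expansion of $\psi$ at $i\infty$ carries only the constant and linear powers of $z$ displayed in \eqref{eq:psi-infty}, with $a_k$ and $b_k$ read off from the principal parts of $-\tfrac{36}{\pi^2}\psi_3$ and $\tfrac{12i}{\pi}(\psi_2-E_2\psi_3)$; the absence of a growing $z^2$-term in \eqref{eq:psi-infty} is exactly \eqref{eq:psi-infty-even}. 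The genuinely delicate points are the careful appeal to the quasimodular structure theorem (so that checking the transformation on the generators $S$ and $T$ suffices for all of $\Gamma$) and the precise handling of the $\mathcal{O}(e^{iCz})$ and $\mathcal{O}(e^{iCSz})$ error terms; the algebraic heart --- periodicity from $(-1)^m=1$ together with the identity $\mathcal{D}(z^2g)=2g$ --- is short.
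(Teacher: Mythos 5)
Your proposal is correct and follows essentially the same route as the paper: after invoking Proposition~\ref{prop-Feval} with $\varepsilon=1$, the paper likewise derives the period-$1$ invariance of $\phi(z)=z^{\frac d2-2}\psi(Sz)$ from \eqref{eq:psi1} via $(TS)^3=\mathrm{id}$, and then produces exactly your decomposition $\psi=z^2\phi+zc_1+c_2$ with $1$-periodic coefficients (its $f$ and $g$ are your $c_1$ and $c_2$, obtained by the same telescoping of the second difference). The one step you defer to a ``characterisation'' --- showing that $c_2$ is modular of weight $-\frac d2$ and that $c_1-\frac{\pi i}{3}E_2c_2$ is modular of weight $2-\frac d2$, which is what actually licenses the appeal to the structure theorem --- is carried out in the paper by short explicit substitutions using $S^2=\mathrm{id}$ and $(TS)^3=\mathrm{id}$ (equations \eqref{eq:gS} and \eqref{eq:f-mod}), so nothing essential is missing from your outline.
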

\begin{proof}
  By Proposition~\ref{prop-Feval} a function $F$ given in the form
  \eqref{eq:F1} is an eigenfunction of the Fourier transform for the eigenvalue
  $(-1)^{\frac d4}$ (this is $\varepsilon=1$) if and only if \eqref{eq:psi1}
  and \eqref{eq:psi2} hold.  From \eqref{eq:psi1} we obtain
  \begin{equation*}
    \psi(z)=(z+1)^{\frac d2-2}\psi(TSTz)
  \end{equation*}
  and then
  \begin{equation*}
    (z+1)^{\frac d2-2}\psi(STz)=z^{\frac d2-2}\psi(TSTSTz)=z^{\frac d2-2}\psi(Sz),
  \end{equation*}
  where we have used that $(TS)^3=\mathrm{id}$. Thus the function
  \begin{equation*}
    \phi(z)=z^{\frac d2-2}\psi(Sz)
  \end{equation*}
  is periodic with period $1$.

  Now we write \eqref{eq:psi2} as
  \begin{equation}\label{eq:psi4a}
    \psi(Tz)-2\psi(z)+\psi(T^{-1}z)=2\phi(z)
  \end{equation}
  and set
  \begin{equation}\label{eq:fdef}
    f(z)=\psi(Tz)-\psi(z)-(2z+1)\phi(z).
  \end{equation}
  Then we have
  \begin{multline*}
    f(z)-f(T^{-1}z)\\=
    \psi(Tz)-2\psi(z)+\psi(T^{-1}z)-(2z+1)\phi(z)+(2z-1)\phi(T^{-1}z).
  \end{multline*}
  Using the periodicity of $\phi$ and \eqref{eq:psi4a} gives the periodicity of
  $f$. Now we set
  \begin{equation}\label{eq:gdef}
    g(z)=\psi(z)-z^2\phi(z)-zf(z).
  \end{equation}
  We compute
  \begin{equation*}
    g(Tz)-g(z)=\psi(Tz)-\psi(z)-((z+1)^2-z^2)\phi(z)-((z+1)-z)f(z)=0,
  \end{equation*}
  where we have used the periodicity of $\phi$ and $f$ as well as the
  definition of $f$. This shows that also $g$ is periodic.

  Thus $\psi$ satisfies the relation
  \begin{equation}
    \label{eq:psi-quasi1}
    \psi(z)=z^{\frac d2}\psi(Sz)+zf(z)+g(z)
  \end{equation}
  for two (yet unknown) periodic functions $f$ and $g$. We now use definitions
  \eqref{eq:fdef} and \eqref{eq:gdef} to express $g$ in terms of $\psi$
  \begin{equation}\label{eq:gpsi}
    g(z)=(z+1)\psi(z)-z\psi(Tz)+z(z+1)z^{\frac d2-2}\psi(Sz).
  \end{equation}
  Substituting $STz$ and multiplying through the denominator yields
  \begin{equation}
    \label{eq:gST}
    \begin{split}
      (z+1)^{\frac d2}g(STz)&=z(z+1)(z+1)^{\frac d2-2}\psi(STz)\\&+
      (z+1)(z+1)^{\frac d2-2}\psi(ST^{-1}Sz)-z\psi(Tz),
    \end{split}
  \end{equation}
  where we have used $TST=ST^{-1}S$. We have already established the
  periodicity of $\phi(z)=z^{\frac d2-2}\psi(Sz)$. This allows to replace the
  first and the second term to yield
  \begin{equation*}
    g(STz)=(z+1)z^{\frac d2-1}\psi(Sz)+(z+1)\psi(z)-z\psi(Tz)=g(z).
  \end{equation*}
  Using the already established periodicity of $g$ this gives
  \begin{equation}
    \label{eq:gS}
    z^{\frac d2}g(Sz)=g(z);
  \end{equation}
  together with the growth condition \eqref{eq:psi-infty} this shows that $g$
  is a weakly holomorphic modular form of weight $-\frac d2$.

  Applying $z\mapsto Sz$ to \eqref{eq:psi-quasi1} and adding this to
  \eqref{eq:psi-quasi1} (divided by $z^{\frac d2}$) yields
  \begin{equation}
    \label{eq:f-mod}
    z^{\frac d2-2}f(Sz)=f(z)+\frac2zg(z);
  \end{equation}
  $f$ is a weakly holomorphic quasimodular form of weight $2-\frac d2$ and
  depth $1$ (again using \eqref{eq:psi-infty}).

  We set
  \begin{equation*}
    h(z)=f(z)-\frac{\pi i}3E_2(z)g(z)
  \end{equation*}
  and use $z^{-2}E_2(z)=E_2(z)-\frac{6i}{\pi z}$ to obtain
  \begin{equation*}
    z^{\frac d2-2}h(Sz)=h(z),
  \end{equation*}
  which together with the obvious periodicity and the growth
  condition~\eqref{eq:psi-infty} yields that $h$ is a weakly holomorphic
  modular form of weight $2-\frac d2$. Inserting this into
  \eqref{eq:psi-quasi1} gives the quasimodularity of $z^{\frac d2-2}\psi(Sz)$
  with weight $4-\frac d2$ and depth $2$. By the structure theorem of
  quasimodular forms (see \cite{Royer2012:quasimodular_forms_introduction,
    Zagier2008:elliptic_modular_forms,
    Kaneko_Zagier1995:generalized_jacobi_theta}), $z^{\frac d2-2}\psi(Sz)$ can
  then be written as \eqref{eq:psi-quasi}, where we have set
  \begin{align*}
    \psi_1(z)&=z^{\frac d2-2}\psi(Sz)-E_2(z)h(z)-E_2(z)^2g(z)\\
    \psi_2(z)&=-\frac{\pi i}{12}h(z)\\
    \psi_3(z)&=-\frac{\pi^2}{36}g(z).
  \end{align*}

  In order to satisfy condition \eqref{eq:psi-infty}, the term multiplied by
  $z^2$ in \eqref{eq:psi-quasi-expl} has to tend to $0$ for $z\to i\infty$,
  which gives \eqref{eq:psi-infty-even}. By \eqref{eq:psi-quasi} this implies
  that \eqref{eq:psi-infty} and \eqref{eq:psi-0-exp} are satisfied for any
  $0<C<2\pi$.
\end{proof}

\subsection{Determining $\psi$}\label{sec:determining-psi}
In a next step we want to determine $\psi$ (or equivalently
$\psi_1$, $\psi_2$, $\psi_3$) to satisfy \eqref{eq:psi-infty-even}. Since
$\psi_1$, $\psi_2$, and $\psi_3$ are weakly holomorphic modular forms of
respective weights $4-\frac d2$, $2-\frac d2$, and $-\frac d2$, we 
express these forms as
\begin{equation}
  \label{eq:psi-ansatz}
  \begin{split}
    \psi_1&=\frac1{\Delta^\ell}\omega_{k+2} P_n^{(k)}(j)\\
    \psi_2&=\frac1{\Delta^\ell}\omega_{k+1} Q_n^{(k)}(j)\\
    \psi_3&=\frac1{\Delta^\ell}\omega_k R_n^{(k)}(j),
  \end{split}
\end{equation}
for $\ell\in\mathbb{N}$ chosen so that $\psi_m\Delta^\ell$ ($m=1,2,3$) are
weakly holomorphic modular forms of positive weight; $P_n^{(k)}$, $Q_n^{(k)}$,
and $R_n^{(k)}$ are polynomials, which have to be determined. The parameter $n$
is an ordering parameter related to the degrees of these polynomials. 
It will be determined by \eqref{eq:nmin} in the course of the
following discussion.

The minimal possible choice of $\ell$ is then
\begin{equation*}
  \ell=\left\lceil\frac d{24}\right\rceil.
\end{equation*}
Furthermore, we set
\begin{equation*}
  k=6\ell-\frac d4,
\end{equation*}
which gives $0\leq k\leq5$.  The forms $\omega_m$ in \eqref{eq:psi-ansatz} are
modular forms of weight $2m$ ($m=0,\ldots,7$), which are given in
Table~\ref{tab:omega}; these forms are uniquely determined by the requirement
to be holomorphic, or to have a pole of minimal order at $i\infty$.
The parameter $n$ refers to the order of the pole of
$\omega_{k+2} P_n^{(k)}(j)$, $\omega_{k+1} Q_n^{(k)}(j)$, or
$\omega_k R_n^{(k)}(j)$. Notice that for $m=1$ the form $\omega_m$ has a simple
pole at $i\infty$, whereas for $m=6,7$ it has a simple zero there. This affects
the possible degrees of the polynomials $P_n^{(k)}$, $Q_n^{(k)}$, and
$R_n^{(k)}$, see Table~\ref{tab:PQR}. This table also gives the dimension of
the space $\mathcal{Q}_n^{(2k+2)}$ of weakly holomorphic quasimodular forms of
weight $2k+2$ and depth $2$, which have a pole of order at most $n$ at
$i\infty$. The table also gives the definition of the quantity $a(k)$, which
will be needed in the sequel.

\begin{table}[h]
  \setlength{\tabulinesep}{1mm}
  \begin{tabu}[h]{|l|c|}
    \hline
    $m$&$\omega_m$\\\hline
    $0$&1\\\hline
    $1$&$-j'=\frac{E_4^2E_6}{\Delta}=\frac{E_6}{E_4}j$\\\hline
    $2$&$E_4$\\\hline
    $3$&$E_6$\\\hline
    $4$&$E_4^2$\\\hline
    $5$&$E_4E_6$\\\hline
    $6$&$\Delta=\Delta\omega_0$\\\hline
    $7$&$E_4^2E_6=\Delta\omega_1$\\\hline
  \end{tabu}
  \medskip
  \caption{The choices of $\omega_m$}\label{tab:omega}
\end{table}

\begin{table}[h]
  \setlength{\tabulinesep}{1mm}
  \centering
  \begin{tabu}[h]{|l|c|c|c|c|c|}
    \hline
    $k$&$\deg P_n^{(k)}$&$\deg Q_n^{(k)}$
    &$\deg R_n^{(k)}$&$\dim\mathcal{Q}_n^{(2k+2)}$&$a(k)$\\
    \hline
    $0$&$n$&$n-1$&$n$&$3n+2$&$1$\\
    \hline
    $1$&$n$&$n$&$n-1$&$3n+2$&$1$\\
    \hline
    $2$&$n$&$n$&$n$&$3n+3$&$2$\\
    \hline
    $3$&$n$&$n$&$n$&$3n+3$&$2$\\
    \hline
    $4$&$n+1$&$n$&$n$&$3n+4$&$3$\\
    \hline
    $5$&$n$&$n+1$&$n$&$3n+4$&$3$\\
    \hline
  \end{tabu}
  \medskip
  \caption{Degrees of the polynomials $P_n^{(k)}$,
    $Q_n^{(k)}$, and $R_n^{(k)}$}\label{tab:PQR}
\end{table}

In light of \eqref{eq:psi-quasi-expl} and the asymptotic behaviour of $\psi$
\eqref{eq:psi-infty} used in Proposition~\ref{prop-psiW} we require that the
polar order of $\psi_2(z)-E_2(z)\psi_3(z)$ (the term multiplied by $z$ in
\eqref{eq:psi-quasi-expl}) is $1$ less than the polar order of
$\psi_3(z)$. This ensures by \eqref{eq:W} that the largest real second order
pole of $W(s)$ is $2$ less than the largest real first order pole. Notice that
condition \eqref{eq:psi-infty-even} ensures that $W(s)$ has no third order
poles in the right half plane. Together this ensures that the polar order of
$\psi$ at $i\infty$ corresponds to the desired sign change of the function $F$
given by \eqref{Fdef}.

In order to achieve the behaviour of $\psi$ described in the last paragraph, we
use the degrees of freedom given by $\dim\mathcal{Q}_n^{(2k+2)}$ to first
ensure that
\begin{equation}
  \label{eq:f-ord}
  \omega_{k+1}Q_n^{(k)}(j)-E_2\omega_kR_n^{(k)}(j)=\mathcal{O}(q^{-n+1})
\end{equation}
and second to eliminate as many Laurent series coefficients of
\begin{equation*}
\omega_{k+2}P_n^{(k)}(j)-2E_2\omega_{k+1}Q_n^{(k)}(j)+E_2^2\omega_kR_n^{(k)}(j)
\end{equation*}
as possible. By solving the according linear equations we can achieve
\begin{equation}
  \label{eq:psi-order}
  \omega_{k+2}P_n^{(k)}(j)-2E_2\omega_{k+1}Q_n^{(k)}(j)+E_2^2\omega_kR_n^{(k)}(j)=
  \mathcal{O}(q^{2n+a(k)-1}).
\end{equation}
In order for $\psi$ to satisfy \eqref{eq:psi-infty-even} we have to choose $n$
so that
\begin{equation*}
  2n+a(k)-1>\ell;
\end{equation*}
the minimal possible choice for $n$ is then
\begin{equation}\label{eq:nmin}
  n=\left\lceil\frac{\ell-a(k)+2}2\right\rceil.
\end{equation}

Condition \eqref{eq:f-ord} ensures that there is a sign change of
$F(\mathbf{x})$ at $\|\mathbf{x}\|^2=2n+2\ell$ and $F(\mathbf{x})\neq0$ for
$\|\mathbf{x}\|^2=2n+2\ell-2$. Expressing $\ell$, $k$, and $n$ in terms of $d$
yields $2n+2\ell=2\lfloor\frac{d+4}{16}\rfloor+2$.

Summing up, we have proved most of the following theorem. For the sake of
simplicity, we abuse notation by writing $f(\mathbf{x})=f(\|\mathbf{x}\|)$,
whenever $f$ is a radial function and the context is clear.
\begin{theorem}\label{thm-even}
  For $d\equiv0\pmod4$ set $n_+=\lfloor\frac{d+4}{16}\rfloor+1$. Then there
  exists a radial Schwartz function $F_+:\mathbb{R}^d\to\mathbb{R}$ satisfying
  \begin{equation}\label{eq:f-plus}
    \begin{split}
      F_+(\mathbf{x})&=(-1)^{\frac d4}\widehat{F}_+(\mathbf{x})
      \quad\text{for all }\mathbf{x}\in\mathbb{R}^d\\
   F_+(\sqrt{2n_+})&=0\quad\text{and }F_+'(\sqrt{2n_+})\neq0\\
   F_+(\sqrt{2m})&=F'_+(\sqrt{2m})=0\quad\text{for }m>n_+,\quad m\in\mathbb{N}.
    \end{split}
  \end{equation}
  If Conjecture~\ref{conj1} stated below holds, then $\sqrt{2n_+}$ is the last
  sign change of the function $F_+$. This is the case for all dimensions
  $\leq312$ by Remark~\ref{rem1} stated below.
\end{theorem}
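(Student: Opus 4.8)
The plan is to collect the facts established in the preceding discussion. By Proposition~\ref{prop-even}, constructing a radial Schwartz eigenfunction with eigenvalue $(-1)^{d/4}$ amounts to producing a weakly holomorphic quasimodular form $\phi=\psi_1-2E_2\psi_2+E_2^2\psi_3$ of weight $4-\frac d2$ and depth $2$ satisfying~\eqref{eq:psi-infty-even}. Inserting the ansatz~\eqref{eq:psi-ansatz}, I would reduce this to a linear system in the coefficients of $P_n^{(k)},Q_n^{(k)},R_n^{(k)}$: with $n$ minimal as in~\eqref{eq:nmin}, the requirements~\eqref{eq:f-ord} and~\eqref{eq:psi-order} impose $3n+a(k)$ equations on the $(3n+a(k)+1)$-dimensional space $\mathcal{Q}_n^{(2k+2)}$ (Table~\ref{tab:PQR}), leaving a nonzero solution. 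Taking the coefficients rational makes $\psi(it)$ real for $t>0$, whence $W$ and $U$ are real on the positive axis and $F_+$ is real-valued; that $F_+$ is Schwartz is Proposition~\ref{prop-FFhat}, whose hypothesis~\eqref{eq:psi-0-exp} holds by the final line of Proposition~\ref{prop-even}.

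Next I would set up the dictionary between the poles of $W$ and the zeros of $F_+$. By~\eqref{eq:F1} we have $U(s)=-4\sin(\frac\pi2 s)^2W(s)$ and $F_+(\mathbf{x})=U(\|\mathbf{x}\|^2)$; comparing~\eqref{eq:psi-quasi-expl} with the principal part in~\eqref{eq:W} shows that the coefficient of $q^{-m}$ in $-\frac{36}{\pi^2}\psi_3$ is the simple-pole weight $a_m$, while that of $q^{-m}$ in $-\frac{12}{\pi}(\psi_2-E_2\psi_3)$ is the double-pole weight $b_m$. Since $\sin(\frac\pi2 s)^2$ vanishes to order two at each even integer, expanding near $s=2m$ gives three cases: $b_m\neq0$ (genuine double pole) yields $U(2m)\neq0$; $b_m=0$ with $a_m\neq0$ (simple pole) yields a simple zero of $U$; and $a_m=b_m=0$ (regular point) yields a double zero. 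As $F_+$ is radial, $F_+'(\sqrt{2m})=2\sqrt{2m}\,U'(2m)$, so these three cases read off as $F_+(\sqrt{2m})\neq0$, a sign change, and a double zero, respectively.

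To locate the sign change I would compute pole orders using Tables~\ref{tab:omega} and~\ref{tab:PQR}: in each of the six cases $k=0,\dots,5$ the forms $\psi_2$ and $\psi_3$, hence also $E_2\psi_3$, have pole order exactly $\ell+n$, and~\eqref{eq:f-ord} is precisely the single cancellation lowering the pole order of $\psi_2-E_2\psi_3$ to $\ell+n-1$. Setting $n_+=\ell+n=\lfloor\frac{d+4}{16}\rfloor+1$, the ansatz forces $a_m=b_m=0$ for $m>n_+$ (double zeros), while~\eqref{eq:f-ord} gives $b_{n_+}=0$, so a simple zero (sign change) occurs at $\sqrt{2n_+}$ provided $a_{n_+}\neq0$; this is~\eqref{eq:f-plus}. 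The last clause is conditional: beyond $\sqrt{2n_+}$ the prescribed zeros are double, so constancy of sign there reduces, via the large-$s$ behaviour of $U$ governed by $\phi$ through the last integral in~\eqref{eq:U-deformed}, to eventual positivity of the $q$-coefficients of $\phi$; this is Conjecture~\ref{conj1}, verified for $d\le312$ in Remark~\ref{rem1}.

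The step I expect to be the main obstacle is the nondegeneracy $a_{n_+}\neq0$, equivalently that $\psi_3$ genuinely has pole order $\ell+n$ (its leading Laurent coefficient does not vanish). The dimension count only produces a nonzero $\phi$; it does not by itself exclude that every solution has vanishing leading coefficient, in which case the last simple zero would move inward and fail to be simple. I would settle this — together with uniqueness of $\phi$ up to scaling and the exact pole and vanishing orders — using the explicit recursive and differential-equation description of these forms developed in Section~\ref{sec:modular-diff-equat}, which pins these orders down unambiguously.
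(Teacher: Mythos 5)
Your proposal is correct and follows essentially the same route as the paper: existence via the ansatz \eqref{eq:psi-ansatz} and the dimension count of Table~\ref{tab:PQR}, the pole--zero dictionary between \eqref{eq:W} and \eqref{eq:F1} translating simple poles of $W$ at $s=2m$ into simple zeros of $U$, and the correct identification of the one genuine gap --- the nonvanishing $a_{n_+}\neq0$ (exactness of the pole order of $\psi_3$) together with $b_{n_+}=0$ --- which the paper, like you, resolves by the differential-equation/recurrence analysis of Section~\ref{sec:diff-equat-quasi} via \eqref{eq:hw-ord} and \eqref{eq:gw-ord}. The conditional last-sign-change clause is handled the same way in both, by deferring to Conjecture~\ref{conj1} and Remark~\ref{rem1}.
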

\begin{proof}
  It only remains to show that $F_+'(\sqrt{2n_+})\neq0$. This follows from the
  discussion of the vanishing orders in Section~\ref{sec:diff-equat-quasi},
  especially equation~\eqref{eq:hw-ord} and the explanation following. More
  precisely, \eqref{eq:hw-ord} implies that $a_{n_+}\neq0$ in
  \eqref{eq:psi-infty}, whereas \eqref{eq:gw-ord} implies that
  $b_{n_+}=0$. This shows by \eqref{eq:W} that $W$ has a simple pole at
  $s=2n_+$, which gives a simple zero of $U$ at this point.
\end{proof}
\begin{remark}\label{rem:extra-even}
  Notice that by \eqref{eq:psi-order} the order increases by $2$, if $n$
  increases by $1$. Together with \eqref{eq:nmin} this gives that for certain
  $d\pmod{48}$ the order of vanishing of $\psi$ is one more than required. This
  means that for these values of the dimension there is one extra degree of
  freedom, that can be used for instance to require $F(\mathbf{0})=0$. This is
  the case for dimensions
  \begin{equation}
    \label{eq:extra-even}
    d\equiv\{0,12,16,28,32,44\}\pmod{48}.
  \end{equation}
\end{remark}
\begin{table}
  \setlength{\tabulinesep}{1mm}
  \begin{tabu}[h]{|l|>{\raggedleft}p{6mm}|>{\raggedleft}p{6mm}|>{\raggedleft}p{5mm}|>{\raggedright}p{2.2cm}|>{\raggedright}p{2.2cm}|>{\raggedright}p{2.2cm}|}
    \cline{2-7}
    \multicolumn{1}{c}{}&\multicolumn{2}{|c|}{$d$}&$n$&\small$P_n^{(k)}(w)$&\small$Q_n^{(k)}(w)$&\small$R_n^{(k)}(w)$\\
    \hline
    \multirow{2}{*}[2mm]{$k=0$}&&$24$&$1$&\tiny$w-3528$&\tiny $1$&\tiny $w+1800$\\
    \cline{2-7} &$48$&$72$&$2$&\tiny$175w^2-1840638 w$ $-475793136$
    &\tiny$175 w+497922$
    &\tiny$175w^2+2534082 w$ $+111078000$\\
    \hline \hline
    \multirow{2}{*}[2mm]{$k=1$}  &  &$20$&$1$&\tiny$w-1008$&\tiny $w-1368$ &\tiny$1$\\
    \cline{2-7} &$44$&$68$&$2$&\tiny$25 w^2-167286 w$ $-10456992$
    &\tiny$25 w^2-18966 w$ $-41044752$
    &\tiny$25 w+172554$\\
    \hline \hline
    \multirow{3}{*}[5mm]{$k=2$}  &  &&$0$&\tiny$1$&\tiny$1$&\tiny$1$\\
    \cline{2-7}
    &$16$&$40$&$1$&\tiny$w-5628$&\tiny$w+420$ &\tiny$w+4740$\\
    \cline{2-7} &$64$&$88$&$2$&\tiny$21 w^2-277373 w$ $-147949620$
    &\tiny$21 w^2+104155 w$ $+2942940$
    &\tiny$21 w^2+449395 w$ $+62398380$\\
    \hline \hline
    \multirow{3}{*}[5mm]{$k=3$}  &    &&$0$&\tiny$1$&\tiny$1$&\tiny$1$\\
    \cline{2-7}
    & $12$&$36$&$1$&\tiny$w-2548$ &\tiny $w-1588$ &\tiny $w+1100$ \\
    \cline{2-7} & $60$&$84$&$2$&\tiny$7 w^2-63953 w$ $-13216476$&\tiny
    $7 w^2+3079 w$ $-26138316$
    &\tiny$7 w^2+82207 w$ $+2838660$\\
    \hline \hline
    \multirow{2}{*}[2mm]{$k=4$}  &    &$8$&$0$&\tiny$ w-1728$&\tiny$1$&\tiny$1$\\
    \cline{2-7}
    &$32$&$56$&$1$&\tiny$5 w^2-39879 w-3302208$&\tiny $5 w+6741$ &\tiny $5 w+44721$\\
    \hline \hline
    \multirow{2}{*}[2mm]{$k=5$}  &    &$4$&$0$&\tiny$1$&\tiny $w-864$ &\tiny$1$\\
    \cline{2-7}
    &$28$&$52$&$1$&\tiny $w-4473$ &\tiny $w^2-1413 w-453600$ &\tiny $w+3375$\\
    \hline
  \end{tabu}
  \caption{The polynomials $P_n^{(k)}$, $Q_n^{(k)}$, and $R_n^{(k)}$;  the
    dimensions in the left column are those covered by
    Remark~\ref{rem:extra-even}.}
  \label{tab:PQR1}
\end{table}


\section{Eigenfunctions for eigenvalue
  $(-1)^{\frac{d}{4}+1}$}\label{sec:odd}
In this section we consider eigenfunctions of the Fourier transform with
eigenvalue $(-1)^{\frac{d}{4}+1}$ of the form \eqref{eq:Us-Laplace}. We show in
Proposition~\ref{prop4.1} that, in this case, the function $\psi$ can be
expressed in terms of weakly holomorphic modular forms for $\Gamma$ and
$\Gamma(2)$. This proposition is essentially equivalent to
\cite[Proposition~4.8]{Cohn_Kumar_Miller+2019:universal_optimality}, except for
different growth conditions in the assumptions. This result was found
independently; we keep the proof in order to keep the presentation
self-contained. We then explore explicit
representations of these forms and show the existence of eigenfunctions
satisfying similar extremal properties as in Section~\ref{sec:even-eigen}.

In the following the function $\llambda$ ($\lambda$ being the Hauptmodul for
$\Gamma(2)$) will play an important role. Properties of this function are
discussed in Appendix~\ref{sec:modular}.
\begin{proposition}\label{prop4.1}
  Let $\psi$ be as in Proposition~\ref{prop-psiU}. Then the corresponding
  function $F$ given by \eqref{Fdef} is an eigenfunction of the Fourier
  transform with eigenvalue $(-1)^{\frac{d}{4}+1}$ if and only if there exists
  a weakly holomorphic modular form $f$ of weight $2-\frac{d}{2}$ for $\Gamma$
  and $\omega$ a weakly holomorphic modular form of weight $2-\frac{d}{2}$ for
  $\Gamma(2)$ such that
\begin{align}
 \psi(z) & = f(z)\cdot \log\lambda(z) + \omega(z),\label{eq:psi-sol} \\
\omega(z) & = z^{\frac{d}{2}-2}\omega(Sz) + \omega(Tz),\label{eq:omega-func}
\end{align} 
where $\log\lambda$ is defined in \eqref{logexp}. 
\end{proposition}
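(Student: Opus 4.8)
The plan is to invoke Proposition~\ref{prop-Feval} with $\varepsilon=-1$, reducing the claim to the assertion that \eqref{eq:psi1} and \eqref{eq:psi2} (equivalently \eqref{eq:psi1a}, \eqref{eq:psi1b} and \eqref{eq:psi2}) hold if and only if $\psi$ has the form \eqref{eq:psi-sol} with the stated properties. Write $\phi(z)=z^{\frac d2-2}\psi(Sz)$, as in the proof of Proposition~\ref{prop-even}. The computation carried out there, using $(TS)^3=\mathrm{id}$ but now retaining the sign, gives $\phi(z+1)=\varepsilon\phi(z)$, so for $\varepsilon=-1$ the function $\phi$ is \emph{anti}-periodic. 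I will use the transformation behaviour of $\log\lambda$ recorded in Appendix~\ref{sec:modular}: with the branch fixed in \eqref{logexp}, the relations $\lambda(Sz)=1-\lambda(z)$ and $\lambda(Tz)=\lambda(z)/(\lambda(z)-1)$ give $\log\lambda(Sz)=\log(1-\lambda(z))$ and $\log\lambda(T^{\pm1}z)=\log\lambda(z)-\log(1-\lambda(z))\pm\pi i$, hence $\log\lambda(z+2)=\log\lambda(z)+2\pi i$ together with the invariance of $\log\lambda$ under $ST^2S^{-1}\in\Gamma(2)$.

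For necessity, assume \eqref{eq:psi1} and \eqref{eq:psi2}. Adding \eqref{eq:psi2} to its $T$-translate and using $\phi(z+1)=-\phi(z)$ cancels $\phi$ and leaves the second-difference identity $\psi(z+2)-\psi(z)=\psi(z+1)-\psi(z-1)$. I then set $f(z)=\frac{1}{2\pi i}\left(\psi(z+1)-\psi(z-1)\right)$. The identity just obtained shows $f(z+1)=f(z)$, while \eqref{eq:psi1a} and \eqref{eq:psi1b} give $z^{\frac d2-2}f(Sz)=\frac{1}{2\pi i}\left(\varepsilon\psi(T^{-1}z)-\varepsilon\psi(Tz)\right)=-\varepsilon f(z)=f(z)$. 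Thus $f$ is invariant under the generators $S$ and $T$ of $\Gamma$, i.e. a modular form of weight $2-\frac d2$ for $\Gamma$; it is weakly holomorphic since its $q$-expansion, read off from \eqref{eq:psi-infty} and \eqref{eq:psi-0-exp}, has only finitely many negative terms.

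Next put $\omega(z)=\psi(z)-f(z)\log\lambda(z)$. Since $f$ has period $1$, $\omega(z+2)=\psi(z+2)-f(z)(\log\lambda(z)+2\pi i)=(\psi(z+2)-2\pi i f(z))-f(z)\log\lambda(z)=\omega(z)$, using $\psi(z+2)-\psi(z)=2\pi i f(z)$; the same bookkeeping, now with the $ST^2S^{-1}$-invariance of $\log\lambda$ and of $\psi$ (the latter obtained by iterating \eqref{eq:psi1}), shows $\omega$ is invariant under $ST^2S^{-1}$. As these two elements generate $\Gamma(2)$, $\omega$ is modular of weight $2-\frac d2$ for $\Gamma(2)$, and it is weakly holomorphic because the growth $\log\lambda\sim\pi i z$ makes $f\log\lambda$ cancel exactly the linear term $-iz\sum_k b_ke^{-2\pi ikz}$ of \eqref{eq:psi-infty}. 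Finally I substitute $\psi=f\log\lambda+\omega$ into \eqref{eq:psi2}: because $\log\lambda(Tz)-2\log\lambda(z)+\log\lambda(T^{-1}z)=-2\log\lambda(Sz)$ and $z^{\frac d2-2}f(Sz)=f(z)$, every term carrying $f$ cancels and \eqref{eq:psi2} collapses to $\omega(Tz)-2\omega(z)+\omega(T^{-1}z)=-2z^{\frac d2-2}\omega(Sz)$; combined with $\omega(Tz)=\omega(T^{-1}z)$ (from $T^2\in\Gamma(2)$) this is precisely \eqref{eq:omega-func}, completing the necessity.

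For sufficiency I reverse these steps. Assuming \eqref{eq:psi-sol}, \eqref{eq:omega-func} and the stated modularity of $f$ and $\omega$ (so that $\omega(Tz)=\omega(T^{-1}z)$ by $T^2\in\Gamma(2)$), substituting $\psi=f\log\lambda+\omega$ and using the transformation laws above makes every $f$-term cancel, turning \eqref{eq:omega-func} back into \eqref{eq:psi2} and the consequence $z^{\frac d2-2}\omega(T^{-1}Sz)=-\omega(Tz)$ of \eqref{eq:omega-func} into \eqref{eq:psi1}; Proposition~\ref{prop-Feval} then identifies $F$ as an eigenfunction for the eigenvalue $(-1)^{\frac d4+1}$. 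I expect the one genuinely delicate point to be the monodromy bookkeeping for $\log\lambda$: one must pin down the branches and additive constants in its $S$- and $T$-transformations so that the period $2\pi i$ under $z\mapsto z+2$ produces exactly the coefficient $f$ of $\log\lambda$ while the constants $\pm\pi i$ cancel in the second difference, and one must verify that $\omega$ is holomorphic at all three cusps of $\Gamma(2)$, for which the decay hypotheses \eqref{eq:psi-infty} and \eqref{eq:psi-0-exp} are used.
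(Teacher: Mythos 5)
Your proof follows essentially the same route as the paper's: reduce to the functional equations of Proposition~\ref{prop-Feval}, derive $\psi(T^2z)-\psi(Tz)-\psi(z)+\psi(T^{-1}z)=0$, take $f$ to be a constant multiple of $\psi(Tz)-\psi(T^{-1}z)$ (shown to be modular for $\Gamma$), and peel it off via $\omega=\psi-f\log\lambda$. The two places where you deviate are in fact corrections of slips in the printed text: your normalization $f=\frac1{2\pi i}\bigl(\psi(Tz)-\psi(T^{-1}z)\bigr)$ (rather than $\frac1{\pi i}$) is the one that makes $\omega$ genuinely $2$-periodic given $\log\lambda(T^2z)=\log\lambda(z)+2\pi i$, and your identity $\log\lambda(Tz)-2\log\lambda(z)+\log\lambda(T^{-1}z)=-2\log\lambda(Sz)$ carries the sign consistent with \eqref{logexp} and \eqref{logexps}, opposite to \eqref{logform2} as printed.
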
 
\begin{proof}
  By Proposition~\ref{prop-Feval} with $\epsilon = -1$, $F$ is an eigenfunction
  of the Fourier transform with eigenvalue $(-1)^{\frac{d}{4}+1}$ iff $\psi$
  satisfies the two equations:
  \begin{align}
\label{eqn1} z^{\frac{d}{2}-2}\psi(TSz) & = -\psi(T^{-1}z),\\
\label{eqn2} 2z^{\frac{d}{2}-2}\psi(Sz) & =
-(\psi(Tz) -2\psi(z) + \psi(T^{-1}z)).
\end{align} 
To solve these we first consider $H(z) : = z^{\frac{d}{2}-2}\psi(Sz)$ which by
\eqref{eqn1} gives
\begin{equation} \begin{split}
H(Tz) & = (Tz)^{\frac{d}{2}-2}\psi(STz) = (Tz)^{\frac{d}{2}-2}\psi(T^{-1}TSTz)\\
          &= -(Tz)^{\frac{d}{2}-2}(TSTz)^{\frac{d}{2}-2}\psi(TSTSTz)\\
          &= -z^{\frac{d}{2}-2}\psi(Sz)= -H(z),
          \end{split}
\end{equation} 
Where we used the property $(TS)^{3} = \mathrm{id}$ in the middle
line. Iterating this property once gives that $H(z+2) = H(z)$ and unravelling
this statement in terms of $\psi$ gives \begin{equation}
  (2z-1)^{\frac{d}{2}-2}\psi(ST^2Sz) = \psi(z).
\end{equation}
Substituting $z \rightarrow STz$ in  \eqref{eqn2} and applying \eqref{eqn1}
repeatedly to get
 \begin{equation} \begin{split}
     2\psi(Tz) & = -(Tz)^{\frac{d}{2}-2}\left(\psi(T^{-1}STz) - 2\psi(STz) +
       \psi(TSTz)\right) \\
 & = \psi(T^{2}z) +2(Tz)^{\frac{d}{2}-2}\psi(STz) + \psi(z)\\
 & = \psi(T^{2}z) -2z^{\frac{d}{2}-2}\psi(Sz)+\psi(z)\\
 & = \psi(T^{2}z)  + \psi(Tz) -\psi(z) + \psi(T^{-1}z).
\end{split}
\end{equation} 
So, altogether we have that
$\psi(T^{2}z) -\psi(Tz) -\psi(z) + \psi(T^{-1}z) = 0$. Defining
$G(z) := \psi(Tz) - \psi(T^{-1}z)$ implies $G(z+1) = G(z)$. Furthermore using
\eqref{eqn1} we obtain
\begin{equation} \begin{split}
    z^{\frac{d}{2}-2}G(Sz) & = z^{\frac{d}{2}-2}(\psi(TSz) - \psi(T^{-1}Sz))\\
    & = -\psi(T^{-1}z) + \psi(Tz)\\
    & = G(z).
  \end{split}
\end{equation} 
Therefore, $G$ is weakly holomorphic modular of weight $2-\frac{d}{2}$ for the
full modular group using the growth condition \eqref{eq:psi-infty}.
We now define
\begin{equation}
  \omega(z) := \psi(z) - \frac{1}{\pi i}G(z)\cdot \log\lambda(z)
\end{equation} 
and from \eqref{eq:log-lambda} we see that $\omega$ is a weakly holomorphic
modular form of weight $2-\frac{d}{2}$ for $\Gamma(2)$ (again using
\eqref{eq:psi-infty}). Moreover, plugging this relationship into \eqref{eqn1}
gives
\begin{equation} \label{eq-omega} \omega(z) =
  z^{\frac{d}{2}-2}\omega(Sz) + \omega(Tz).
\end{equation}
Finally, setting $f : = \frac{1}{\pi i} \cdot G$ we get the desired
conclusions.
\end{proof}
\subsection{Determining $\psi$}\label{sec:determining-psi-odd}
In this step our goal will be determining $\psi$ given its representation in
terms of $f$ and $\omega$. We use the fact that $\mathbb{C}(\lambda)$ is a
field extension of $\mathbb{C}(j)$ to characterise the solutions of
\eqref{eq-omega}. Then using linear algebra, we ensure that conditions
\eqref{eq:psi-infty} and \eqref{eq:psi-0-exp} hold. We will show that due to
\eqref{eq-omega}, achieving the former condition will give the latter.

To begin, we recall $f$ and $\omega$ are weakly holomorphic modular forms of
weight $2-\frac{d}{2}$ for the groups $\Gamma$ and $\Gamma(2)$
respectively. There are no modular forms of negative weight because such forms
must have poles on either $\mathbb{H}$ or at the cusps. The contour integration
arguments from Proposition~\ref{prop-psiU} rule out the former and so $f$ and
$\omega$ must and can only have poles at the cusps. To continue,
define
\begin{align*}
  \ell & = \left \lceil \frac{d-4}{24} \right \rceil \\
  k & = 6\ell - \frac{d-4}{4},
\end{align*}
which gives $0 \leq k \leq 5$. From this we set
\begin{align}
f & = \frac{\omega_{k}}{\Delta^{\ell}}P^{(k)}(j) \\
\label{romega}\omega & = \frac{\omega_{k}}{\Delta^{\ell}}R^{(k)}(\lambda),
\end{align} 
where we recall from Table~\ref{tab:omega} that $\omega_{k}$ is a weakly
holomorphic modular form for the full modular group of weight $2k$.  $P^{(k)}$
is a polynomial associated with each $k$, and $R^{(k)}$ is a rational function
depending on our choice of $k$. This representation follows because
$f \cdot \frac{\Delta^{\ell}}{\omega_{k}}$ is a weakly holomorphic form of
weight 0 and using the fact that $j$ is Hauptmodul for $\Gamma$, this implies
that it must be rational function in $j$. Moreover, since such a rational
function can only have poles at $\infty$, it must be a
polynomial. Analogously, since $\lambda$ is Hauptmodul for $\Gamma(2)$ we can
similarly conclude that $\omega \cdot \frac{\Delta^{\ell}}{\omega_{k}}$ must be
a rational function in $\lambda$. What differs here however is that $\Gamma(2)$
has three cusps (namely 0, 1, and $i\infty$). From our contour integration
argument in Proposition~\ref{prop-psiU} we see that we cannot have a pole at
the origin (in fact \eqref{eq:psi-0-exp} implies we must have a zero here), we
can (in fact must) have a pole at $i\infty$, and we may have unprescribed
behaviour at $\pm 1$.  This implies that the most we can conclude is that the
denominator of such a rational function, say $R(x)$, can only have factors of
the form $x^{a}(1-x)^{b}$ because $\lambda(0) =1$, $\lambda(1) = \infty$, and
$\lambda(i\infty) = 0$.

To continue, we will use  \eqref{eq-omega} to analyse the possible choices for
$R^{(k)}$. Combining \eqref{eq-omega} and \eqref{romega} yields
\begin{equation} \label{eq:funcr} 
R^{(k)}(\lambda(z)) = R^{(k)}(\lambda(Sz)) + R^{(k)}(\lambda(Tz)) 
\end{equation} 
We note that the field of meromorphic functions $\mathbb{C}(\lambda)$ is a
degree 6 field extension over the field of meromorphic functions
$\mathbb{C}(j)$ with the minimal polynomial of $\lambda$ over $\mathbb{C}(j)$
given by:
\begin{equation}
\label{eq-lambda}
\lambda^{6}-3\lambda^{5}+(6-j)\lambda^{4}-(7-2j)\lambda^{3}
+(6-j)\lambda^{2}-3\lambda+1 = 0
\end{equation}
Therefore, $R^{(k)}$ can be expressed in a unique way as
\begin{equation}
R^{(k)}(\lambda) = \sum^{5}_{m=0}R^{(k)}_{m}(j)\lambda^{m}
\end{equation}
for rational functions $R^{(k)}_{m}$. Inserting this into \eqref{eq:funcr} we
get
\begin{equation}
  \sum_{m=0}^{5}((1-\lambda)^{5}\lambda^{m}-(1-\lambda)^{5+m}+
  (-1)^{m}\lambda^{m}(1-\lambda)^{5-m})R^{(k)}_{m}(j) = 0
\end{equation}
We can use the minimal polynomial \eqref{eq-lambda} to write all powers of
$\lambda$ larger than 5 by linear combinations of
$\{1,\lambda,\ldots,\lambda^{5}\}$. This gives a linear system of 6 equations
for the 6 unknown functions $R_{m}^{(k)}$, $k =0,\ldots,5$. It can be checked
directly that this system has rank 4 and hence has a 2 dimensional kernel. This
supports an ansatz of the form
\begin{equation}
\label{eq-ansatz}
 \omega_{k}R^{(k)}(\lambda) = \chi_{1}^{(k)}Y^{(k)}(j) + \chi_{2}^{(k)}Z^{(k)}(j)
\end{equation} 
where the $Y^{(k)}$ and $Z^{(k)}$ are polynomials and $\chi_{1}^{(k)}$ and
$\chi_{2}^{(k)}$ are two linearly independent solutions of
\begin{equation}
  \chi(z) = z^{-2k}\chi(Sz) + \chi(Tz).  \label{eq:mod}
\end{equation}
Table~\ref{tab:chis} gives solutions of minimal orders at $z=0$ and $z=i\infty$.

\begin{table}[h]
  \setlength{\tabulinesep}{1mm}
  \centering
  \begin{tabu}[h]{|l|c|c|}
    \hline $k$&$\chi_1^{(k)}$&$\chi_2^{(k)}$\\\hline
    $0$&$\frac{(1+\lambda)(1-\lambda)(1-\lambda+\lambda^{2})}{\lambda^{2}}$
    &$\frac{(1+\lambda)(1-\lambda+\lambda^{2})}{\lambda(1-\lambda)}$\\\hline
    $1$&$\theta^{4}_{00}(1-\lambda)$
    &$\theta^{4}_{00}\frac{(1-\lambda)^{3}(2+3\lambda+2\lambda^{2})}{\lambda^{2}}$\\\hline
    $2$&$\theta_{00}^{8}(1-\lambda^{2})$
    &$\theta_{00}^{8}\frac{(1+\lambda)(1+3\lambda-7\lambda^{2}+3\lambda^{3}+\lambda^{4})}{\lambda(1-\lambda)}$\\\hline
    $3$&$\theta_{00}^{12}(1-\lambda)(1-\lambda+\lambda^{2})$
    &$\theta_{00}^{12}\frac{(1-\lambda+\lambda^{2})(1+3\lambda-10\lambda^{2}+3\lambda^{3}+\lambda^{4})}{\lambda(1-\lambda)}$\\\hline
    $4$&$\theta_{00}^{16}\lambda(1+\lambda)(1-\lambda)$
    &$\theta_{00}^{16}\frac{(1+\lambda)(1-\lambda +
      \lambda^{2}-\lambda^{3}+\lambda^{4}-\lambda^{5}+\lambda^{6})}{\lambda(1-\lambda)}$\\\hline
    $5$&$\theta_{00}^{20}\lambda(1-\lambda)(1-4\lambda+\lambda^{2})$
    &$\theta_{00}^{20}\frac{1-32\lambda^{3}+60\lambda^{4}-32\lambda^{5}+\lambda^{8}}{\lambda(1-\lambda)}$\\\hline
  \end{tabu}
  \medskip
  \caption{The choices for the forms $\chi_1^{(k)}$ and $\chi_2^{(k)}$}
  \label{tab:chis}
\end{table}
Putting all this information together we get that $\psi$ has the
form
\begin{equation} \label{psiform} \psi =
  \frac{1}{\Delta^{\ell}}\left(X^{(k)}(j)\omega_{k}\log\lambda +
    \chi_{1}^{(k)}Y^{(k)}(j)+\chi_{2}^{(k)}Z^{(k)}(j)\right)
\end{equation}
for polynomials $X^{(k)}, Y^{(k)}, Z^{(k)}$ that depend on the value of
$k$. Our next step will be to choose the degrees of $X^{(k)}, Y^{(k)}$, and
$Z^{(k)}$ and use the degrees of freedom given by the coefficients so that
\eqref{psiform} satisfies \eqref{eq:psi-0-exp}.  In particular this implies
that we need to choose their degrees so that $\psi$ vanishes to sufficiently
large order at $i\infty$. In particular, we want
\begin{equation} \label{eq:mod1} \begin{split}
\varphi(z) & : = z^{-2k}(X^{(k)}(j)\omega_{k}(Sz)\log\lambda(Sz) \\
& + \chi_{1}^{(k)}(Sz)Y^{(k)}(j(z))+\chi_{2}^{(k)}(Sz)Z^{(k)}(j(z))) =
\mathcal{O}(q^{\ell+\frac{1}{2}}). 
\end{split}
\end{equation} 
Before continuing in this direction however, we show two short lemmas.
\begin{lemma} \label{halflem} Suppose $\varphi(z)$ is as in \eqref{eq:mod1}.
  Then it has only half integer exponents in its Fourier expansion.
\end{lemma}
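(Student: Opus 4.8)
The plan is to reduce the lemma to a single functional relation, namely $\varphi(Tz)=-\varphi(z)$. The point is that $T:z\mapsto z+1$ acts on the local parameter $Q:=e^{\pi i z}$ by $Q\mapsto -Q$, so a function satisfying $\varphi(Tz)=-\varphi(z)$ is odd in $Q$ and therefore has a Fourier expansion supported on odd powers of $Q$, i.e. on half-integer powers of $q=Q^2$. Thus it suffices to prove the antisymmetry $\varphi\circ T=-\varphi$.

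To do this I would first put $\varphi$ into closed form. Write $\Psi:=X^{(k)}(j)\omega_k\log\lambda+\chi_1^{(k)}Y^{(k)}(j)+\chi_2^{(k)}Z^{(k)}(j)$ for the numerator in \eqref{psiform}, so that $\psi=\Delta^{-\ell}\Psi$. Since $j$ is invariant under $\Gamma$ we have $j(Sz)=j(z)$, and hence the definition \eqref{eq:mod1} reads simply $\varphi(z)=z^{-2k}\Psi(Sz)=z^{-2k}\Delta(Sz)^{\ell}\psi(Sz)$. Because $\Delta$ is modular of weight $12$ for $\Gamma$ we have $\Delta(Sz)^{\ell}=z^{12\ell}\Delta(z)^{\ell}$, and the choice $k=6\ell-\tfrac{d-4}{4}$ yields $12\ell-2k=\tfrac d2-2$. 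Therefore
\begin{equation*}
\varphi(z)=z^{\frac d2-2}\Delta(z)^{\ell}\psi(Sz)=\Delta(z)^{\ell}\,H(z),
\qquad H(z):=z^{\frac d2-2}\psi(Sz),
\end{equation*}
where $H$ is precisely the auxiliary function introduced in the proof of Proposition~\ref{prop4.1}.

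The decisive input is then already at hand: in the proof of Proposition~\ref{prop4.1} the relation \eqref{eqn1} forces $H(Tz)=-H(z)$. One checks that this applies here because the ansatz \eqref{psiform} produces a $\psi$ of exactly the form required by Proposition~\ref{prop4.1} for \emph{every} admissible choice of the polynomials $X^{(k)},Y^{(k)},Z^{(k)}$: indeed $f=\Delta^{-\ell}X^{(k)}(j)\omega_k$ is weakly holomorphic modular of weight $2-\tfrac d2$ for $\Gamma$, and since $\chi_1^{(k)},\chi_2^{(k)}$ solve \eqref{eq:mod}, the form $\omega=\Delta^{-\ell}(\chi_1^{(k)}Y^{(k)}(j)+\chi_2^{(k)}Z^{(k)}(j))$ satisfies \eqref{eq-omega}, so \eqref{eqn1} holds regardless of the coefficients. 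Since $\Delta^{\ell}$ is $T$-periodic, we conclude $\varphi(Tz)=\Delta(Tz)^{\ell}H(Tz)=-\Delta(z)^{\ell}H(z)=-\varphi(z)$, and the parity observation of the first paragraph finishes the proof. The only genuinely delicate point in this route is the weight bookkeeping producing the exponent $\tfrac d2-2$; everything else is structural. A more computational alternative avoids $H$ altogether: inserting the functional equation \eqref{eq:mod} for the $\chi_i^{(k)}$ together with the transformation $\log\lambda(Sz)=\log\lambda(z)-\log\lambda(Tz)+\pi i$ (coming from $\lambda(Sz)\lambda(Tz)=-\lambda(z)$) into \eqref{eq:mod1} gives $\varphi=\Psi(z)-\Psi(Tz)+\pi i X^{(k)}(j)\omega_k$, in which the even-power term $-\pi i X^{(k)}(j)\omega_k$ generated by the linear part $\pi i z$ of $\log\lambda$ is cancelled exactly by the explicit $+\pi i X^{(k)}(j)\omega_k$, again leaving only odd powers of $Q$. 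Here the main obstacle is tracking the additive branch constant of $\log\lambda$ so that this cancellation is exact, which is why I would favour the $\varphi=\Delta^{\ell}H$ argument.
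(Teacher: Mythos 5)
Your argument is correct, but it is organised quite differently from the paper's. The paper treats the two pieces of $\varphi$ separately: for $\chi=\chi_1^{(k)}Y^{(k)}(j)+\chi_2^{(k)}Z^{(k)}(j)$ it uses \eqref{eq:mod} to write $z^{-2k}\chi(Sz)$ as $\chi(z)-\chi(Tz)$, which is twice the odd part of the expansion of $\chi$ in $e^{\pi iz}$ and hence has no integer exponents, and for the $\log\lambda$-term it simply quotes the explicit expansion \eqref{logexps}; no global relation $\varphi\circ T=-\varphi$ is ever stated. You instead prove exactly that anti-invariance, via the identification $\varphi=\Delta^{\ell}H$ (your weight count $12\ell-2k=\tfrac d2-2$ is right) together with $H\circ T=-H$. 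Your route is more conceptual and handles both terms uniformly; its one soft spot is that you import $H\circ T=-H$ from the proof of Proposition~\ref{prop4.1}, where it is derived from \eqref{eqn1}, and you justify \eqref{eqn1} by the ``if'' direction of that proposition --- a direction the paper asserts but does not write out, and which as stated carries growth hypotheses on $\psi$ that are precisely what the polynomials have not yet been chosen to guarantee at this point of the argument. This is easily repaired without any appeal to Fourier eigenfunctions: from \eqref{eq:mod} and the modularity of $f=\Delta^{-\ell}X^{(k)}(j)\omega_k$ one gets $H(z)=f(z)\log\lambda(Sz)+\omega(z)-\omega(Tz)$ directly, and then $H(Tz)=-H(z)$ reduces to the single identity $\log\lambda(STz)=-\log\lambda(Sz)$, which follows from $\lambda(STz)\lambda(Sz)=1$ and the fact that both logarithms vanish at $i\infty$ by \eqref{logexps}. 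Finally, your ``computational alternative'' is in substance the paper's own proof, and the branch you use, $\log\lambda(Sz)=\log\lambda(z)-\log\lambda(Tz)+\pi i$, is the correct one --- it follows from \eqref{logexp} and \eqref{logexps} --- even though it differs in the sign of $\pi i$ from the printed formula \eqref{eq:log-lambda}, which appears to contain a typo; the cancellation of the term $\pi i\,X^{(k)}(j)\omega_k$ that you describe does go through.
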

\begin{proof}
Let
\begin{equation*}
\chi(z)= \chi_{1}^{(k)}(z)Y^{(k)}(j(z))+\chi_{2}^{(k)}(z)Z^{(k)}(j(z))),
\end{equation*}
denote sum of the last two terms on the right side of \eqref{eq:mod1}.  Then
$\chi$ satisfies \eqref{eq:mod} and so
\begin{equation*}
z^{-2k}\chi(Sz)=    \chi(Tz)-\chi(z),
\end{equation*}
which implies that all terms in the Fourier expansion of $z^{-2k}\chi(Sz)$ with
integer exponents vanish. Moreover, we see from \eqref{logexps} that the
expression $z^{-2k}X^{(k)}(j)\omega_{k}(Sz)\log\lambda(Sz)$ has only half
integer exponents in its Fourier expansion, giving the claim.
\end{proof}
\begin{lemma} \label{evenorder} Let $\psi$ be given by \eqref{psiform} with
  polynomials $X$, $Y$, $Z$ such that \eqref{eq:psi-0-exp} holds. Then the
  principal part of $\psi$ at $i\infty$ has only integer exponents of $q$.
\end{lemma}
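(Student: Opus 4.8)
The plan is to split the $q^{1/2}$--expansion of $\psi$ into its \emph{integer-exponent} part $\psi^{\mathrm{ev}}$ and its \emph{half-integer-exponent} part $\psi^{\mathrm{odd}}$, where the terms $z\cdot q^{m}$ coming from the linear part of $\log\lambda$ are counted with $\psi^{\mathrm{ev}}$ since they carry integer $q$--exponents. With this convention the assertion is equivalent to $\psi^{\mathrm{odd}}=\mathcal{O}(q^{1/2})$ as $z\to i\infty$, i.e.\ $\psi^{\mathrm{odd}}$ has no negative exponents. First I would record the elementary fact that under $z\mapsto z+1$ the monomial $q^{r/2}=e^{\pi irz}$ acquires the sign $(-1)^{r}$, so that, up to the contribution of the linear term, the half-integer part of any such expansion is extracted by $\tfrac12\bigl(g(z)-g(Tz)\bigr)$.

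Next I would identify $\psi^{\mathrm{odd}}$ explicitly from \eqref{psiform}. Since $f$, $\omega_{k}$, $\Delta$ and every polynomial in $j$ have $q$--expansions in integer powers only, the sole sources of half-integer powers are the factor $\log\lambda$ and the form $\omega$. Because $\lambda\mapsto\lambda/(\lambda-1)$ under $z\mapsto z+1$, equivalently $1-\lambda\mapsto(1-\lambda)^{-1}$, the function $\log(1-\lambda)$ is odd under $q^{1/2}\mapsto-q^{1/2}$ and so carries exactly the half-integer powers of $\log\lambda$; concretely $(\log\lambda)^{\mathrm{odd}}=\tfrac12\log(1-\lambda)$, the additive branch constant lying in the integer part and being irrelevant. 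Hence $\psi^{\mathrm{odd}}=\tfrac12 f\log(1-\lambda)+\omega^{\mathrm{odd}}$, and by the functional equation \eqref{eq-omega} one has $2\omega^{\mathrm{odd}}=\omega(z)-\omega(Tz)=z^{\frac d2-2}\omega(Sz)$.

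The decisive step is to recognise the auxiliary function $\varphi$ of \eqref{eq:mod1} as precisely $2\Delta^{\ell}\psi^{\mathrm{odd}}$. Using $\Delta(Sz)=z^{12}\Delta(z)$, $j(Sz)=j(z)$ and $2k=12\ell-(\tfrac d2-2)$ one checks that the weight powers cancel and $\varphi(z)=z^{\frac d2-2}\Delta^{\ell}(z)\,\psi(Sz)$. Substituting the transformation laws $f(Sz)=z^{2-\frac d2}f(z)$, $\log\lambda(Sz)=\log(1-\lambda(z))$ and $\omega(Sz)=z^{2-\frac d2}\bigl(\omega(z)-\omega(Tz)\bigr)$ collapses the remaining factors of $z$ and yields $\varphi=\Delta^{\ell}\bigl(f\log(1-\lambda)+\omega(z)-\omega(Tz)\bigr)=2\Delta^{\ell}\psi^{\mathrm{odd}}$, the last equality being the two identities of the previous paragraph. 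Since the hypothesis \eqref{eq:psi-0-exp} is imposed here through \eqref{eq:mod1} in the form $\varphi=\mathcal{O}(q^{\ell+1/2})$, and $\Delta^{\ell}=q^{\ell}(1+\mathcal{O}(q))$ is a unit times $q^{\ell}$, dividing gives $\psi^{\mathrm{odd}}=\varphi/(2\Delta^{\ell})=\mathcal{O}(q^{1/2})$. Thus no negative half-integer power survives, so every term of negative exponent in $\psi$ lies in $\psi^{\mathrm{ev}}$ and has integer exponent, which is the claim.

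I expect the main obstacle to be the bookkeeping that converts the three presentations of the same object into one another: verifying $(\log\lambda)^{\mathrm{odd}}=\tfrac12\log(1-\lambda)$ with the branch fixed in \eqref{logexp}, and confirming that the weight factors $z^{\pm(\frac d2-2)}$ together with the power $z^{12\ell}$ from $\Delta^{\ell}(Sz)$ cancel exactly, so that $\varphi=2\Delta^{\ell}\psi^{\mathrm{odd}}$ holds on the nose rather than up to a spurious power of $z$. Once this identity is secured, the vanishing order of $\varphi$ supplied by Lemma~\ref{halflem} and \eqref{eq:mod1} finishes the argument immediately.
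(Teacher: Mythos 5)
Your proof is correct, and at bottom it rests on the same mechanism as the paper's: both arguments reduce to the identity that the half-integer-exponent part of $\psi$ equals $\tfrac12 z^{\frac d2-2}\psi(Sz)$, which is $\mathcal{O}(q^{1/2})$ by the hypothesis \eqref{eq:psi-0-exp}. The execution differs, though. The paper obtains the identity in two lines without ever opening up the representation of $\psi$: writing $\psi=\psi_1+z\psi_2$ with $\psi_2$ supported on integer exponents, the second difference $\psi(Tz)-2\psi(z)+\psi(T^{-1}z)$ annihilates the $z\psi_2$ contribution and equals $2\psi_1(Tz)-2\psi_1(z)$, i.e.\ $-4$ times the half-integer part, which \eqref{eqn2} identifies with $-2z^{\frac d2-2}\psi(Sz)$. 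You instead re-derive the same identity summand by summand from the explicit form \eqref{psiform}, using the modularity of $f$, the $S$-transformation of $\log\lambda$, and the functional equation \eqref{eq-omega}, and then recognise the result as $\varphi/\Delta^{\ell}$ from \eqref{eq:mod1}. This costs extra bookkeeping — the branch constant in $\log\lambda(Sz)$ (which you correctly park in the integer part), and the cancellation of the weight factors $z^{\pm(\frac d2-2)}$ against the $z^{12\ell}$ from $\Delta^{\ell}(Sz)$ via $2k=12\ell-\frac d2+2$ — but it has the merit of making the link between the hypothesis, the auxiliary function $\varphi$, and Lemma~\ref{halflem} completely explicit. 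Both routes are sound; the paper's is simply the more economical packaging of the same fact.
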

\begin{proof}
  By our assumption $z^{\frac d2-2}\psi(Sz)=\mathcal{O}(q^{\frac12})$. 
  Since
  $\psi$ can be written as
  \begin{equation*}
    \psi(z)=\sum_{k=-m}^\infty a_kq^{\frac k2}-iz\sum_{k=-n}^\infty b_kq^k
    = \psi_1+z\psi_2,
  \end{equation*}
  \eqref{eqn2} implies that $\psi_1$ satisfies 
  \begin{multline*}
    \psi(Tz)-2\psi(z)+\psi(T^{-1}z)=
    \psi_1(Tz)-2\psi_1(z)+\psi_1(T^{-1}z)\\
    =2\psi_1(Tz)-2\psi_1(z)=
    \mathcal{O}(q^{\frac12}),
  \end{multline*}
  which gives the assertion of the lemma.
\end{proof}

In light of Lemmas~\ref{halflem} and~\ref{evenorder}, we first assume that
\eqref{eq:psi-0-exp} holds and define the subscript $n$ for the polynomial
$X_{n}^{(k)}$ so that the following polar order is achieved.
 \begin{equation}
 \label{poleorderx}
 X^{(k)}_{n}(j)\omega_{k}  = \mathcal{O}(q^{-n}),
\end{equation} 
We note that this implies that for each $k \neq 1$ the degree of the polynomial
$X_{n}^{(k)}$ is at most $n$ and for $k =1$ that it has degree at most
$n-1$. We similarly adopt the notations $Y_{n}^{(k)}$ and $Z_{n}^{(k)}$ to
refer to the polynomials that give us:
\begin{align} \label{poleorderyz}
  \chi_{1}^{(k)}(z)Y^{(k)}_{n}(j(z))+\chi_{2}^{(k)}(z)Z^{(k)}_{n}(j(z)) & =
  \mathcal{O}(q^{-n-1})\\\label{poleordersy}
  z^{-2k}(\chi_{1}^{(k)}(Sz)Y^{(k)}_{n}(j(z)))& = \mathcal{O}(q^{-n+\frac{1}{2}})\\
\label{poleordersz}
z^{-2k}(\chi_{2}^{(k)}(Sz)Z^{(k)}_{n}(j(z))) &= \mathcal{O}(q^{-n+\frac{1}{2}}). 
\end{align}
We observe that \eqref{poleorderyz}, \eqref{poleordersy}, and
\eqref{poleordersz} are sufficient to put upper bounds on the degrees of
polynomials $Y_{n}^{(k)}$ and $Z_{n}^{(k)}$. With $b(k)$ as in
Table~\ref{tab:XYZ} we can use the degrees of freedom gained from the
coefficients of $X_{n}^{(k)}$, $Y_{n}^{(k)}$, and $Z_{n}^{(k)}$ so that
 \begin{equation} \label{optorder} \begin{split}
& z^{-2k}(X^{(k)}_{n}(j)\omega_{k}(Sz)\log\lambda(Sz) 
+ \chi_{1}^{(k)}(Sz)Y^{(k)}_{n}(j(z)) \\ & +\chi_{2}^{(k)}(Sz)Z^{(k)}_{n}(j(z)))
 = \mathcal{O}(q^{2n+\frac{b(k)}{2}}), 
\end{split}
\end{equation} 
which is a strengthening of our hypothesis that \eqref{eq:psi-0-exp} is
satisfied. We then observe that \eqref{poleorderx} and \eqref{poleorderyz}
ensure by \eqref{eq:W} that the largest real second order pole of $W(s)$ is 2
less than the largest real first order pole. Altogether, this will give us the
desired sign change of the function $F$ given by \eqref{Fdef}. The degrees of
these polynomials are also detailed in Table~\ref{tab:XYZ}.

 \begin{table}[h]
  \setlength{\tabulinesep}{1mm}
  \centering
  \begin{tabu}[h]{|l|c|c|c|c|}
    \hline
    $k$&$\deg X_n^{(k)}$&$\deg Y_n^{(k)}$
    &$\deg Z_n^{(k)}$&$b(k)$\\
    \hline
    $0$&$n$&$n$&$n-1$&$3$\\
    \hline
    $1$&$n-1$&$n$&$n$&$3$\\
    \hline
    $2$&$n$&$n+1$&$n$&$5$\\
    \hline
    $3$&$n$&$n+1$&$n$&$5$\\
    \hline
    $4$&$n$&$n+2$&$n+1$&$7$\\
    \hline
    $5$&$n$&$n+2$&$n+1$&$7$\\
    \hline
  \end{tabu}
  \medskip
   \caption{Degrees of the polynomials $X_n^{(k)}$,
    $Y_n^{(k)}$, and $Z_n^{(k)}$}\label{tab:XYZ}
\end{table}
We now need to choose $n$ so that
\begin{equation}
2n+\frac{b(k)}{2} > \ell 
\end{equation} 
so that \eqref{eq:psi-0-exp} is satisfied. This then gives that the minimal
choice of $n$ is then
\begin{equation} \label{eqmin}
  n=\left\lceil\frac{2\ell-b(k)}{4}\right\rceil.
\end{equation}
Then conditions \eqref{poleorderx} and \eqref{poleorderyz} ensure that there is
a sign change of $F(\mathbf{x})$ at $\|\mathbf{x}\|^2=2n+2\ell$+2 and
$F(\mathbf{x}) \neq 0$ for $\|\mathbf{x}\|^2=2n+2\ell$. Expressing $\ell, k,$
and $n$ in terms of $d$ yields $2n + 2\ell =2\lfloor\frac{d}{16}\rfloor$.

Summing up, we have proved most of the following theorem. The theorem is
formulated with some abuse of notation, which is justified by the fact that it
discusses radial functions: we write $F_-(\mathbf{x})=F_-(\|\mathbf{x}\|)$ and
consider $F_-$ as multivariate and univariate function as appropriate.

\begin{theorem}\label{thm-odd}
  For $d\equiv0\pmod4$ set $n_{-}=\lfloor\frac{d}{16}\rfloor + 1$. Then there
  exists a radial Schwartz function $F_{-}:\mathbb{R}^d\to\mathbb{R}$ satisfying
  \begin{equation}\label{eq:f-minus}
    \begin{split}
      F_{-}(\mathbf{x})&=(-1)^{\frac{d}{4}+1}\widehat{F}_{-}(\mathbf{x})
      \quad\text{for all }\mathbf{x}\in\mathbb{R}^d\\
   F_{-}(\sqrt{2n_{-}})&=0\quad\text{and }F'_{-}(\sqrt{2n_{-}})\neq0\\
   F_{-}(\sqrt{2m})&=F'_{-}(\sqrt{2m})=0\quad\text{for }m>n_-,
   \quad m\in\mathbb{N}.
    \end{split}
  \end{equation}
  \end{theorem}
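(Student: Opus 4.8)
The plan is to proceed exactly as in the proof of Theorem~\ref{thm-even}: almost everything has already been assembled in Section~\ref{sec:determining-psi-odd}, and only the nonvanishing of the derivative at the last zero needs a separate argument. First I would fix $\psi$ of the form \eqref{psiform}, with the polynomials $X_n^{(k)},Y_n^{(k)},Z_n^{(k)}$ chosen so that the order conditions \eqref{poleorderx}, \eqref{poleorderyz}, and \eqref{optorder} hold. Proposition~\ref{prop4.1} then shows that the corresponding $F$ is an eigenfunction of the Fourier transform with eigenvalue $(-1)^{\frac d4+1}$, while \eqref{optorder} is precisely the decay condition \eqref{eq:psi-0-exp}, so that $F$ is a radial Schwartz function by Proposition~\ref{prop-FFhat}. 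Setting $F_{-}:=F$ gives the first line of \eqref{eq:f-minus} immediately.

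The remaining statements all concern the behaviour of $F_{-}(\mathbf{x})=U(\|\mathbf{x}\|^2)$ at the points $\|\mathbf{x}\|^2=2m$. The key reduction is to expand $U(s)=-4\sin\left(\frac\pi2 s\right)^2W(s)$ from \eqref{eq:F1} near an even integer $s=2m$: there $\sin\left(\frac\pi2 s\right)^2$ has a double zero, so by \eqref{eq:W} the local behaviour of $U$ depends only on the Laurent coefficients of $W$ at $s=2m$, namely the numbers $a_m,b_m$ from the expansion \eqref{eq:psi-infty} of $\psi$ (whose principal part has only integer exponents by Lemma~\ref{evenorder}). A short computation gives $U(2m)=-b_m$ and $U'(2m)=-\pi a_m$, so that, after the chain rule, $F_{-}(\sqrt{2m})=0$ precisely when $b_m=0$ and, in that case, $F_{-}'(\sqrt{2m})=0$ precisely when $a_m=0$ as well. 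Everything thus reduces to reading off $a_m$ and $b_m$ from the construction.

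Next I would identify these coefficients. The only source of explicit linear growth in $z$ in \eqref{psiform} is the factor $\log\lambda\sim\pi i z$ multiplying $f$, since $\omega$ and the $\chi_i^{(k)}$ carry none; hence the $z$-linear part of $\psi$ is $\pi i z\,f$, and by \eqref{poleorderx} its polar order is $n+\ell=n_{-}-1$. Consequently $b_m=0$ for every $m\ge n_{-}$. For the constant-in-$z$ part, the pole of highest order is contributed by $\omega=\Delta^{-\ell}\bigl(\chi_1^{(k)}Y_n^{(k)}(j)+\chi_2^{(k)}Z_n^{(k)}(j)\bigr)$, whose polar order is at most $n+1+\ell=n_{-}$ by \eqref{poleorderyz}; hence $a_m=0$ for $m>n_{-}$. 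Combining these, for every $m>n_{-}$ both $a_m$ and $b_m$ vanish, so $W$ is regular and $U$ has a double zero at $s=2m$, which yields the third line of \eqref{eq:f-minus}; and $b_{n_{-}}=0$ already gives $F_{-}(\sqrt{2n_{-}})=0$.

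The one point left untouched by this bookkeeping is $F_{-}'(\sqrt{2n_{-}})\neq0$, which by the above is equivalent to $a_{n_{-}}\neq0$. Here $a_{n_{-}}$ equals, up to a nonzero constant, the leading Laurent coefficient of $\omega$ at its pole of order $n_{-}$, that is, the coefficient of $q^{-n-1}$ in $\chi_1^{(k)}Y_n^{(k)}(j)+\chi_2^{(k)}Z_n^{(k)}(j)$. The bound \eqref{poleorderyz} only guarantees that this pole has order \emph{at most} $n+1$, and the hard part will be to prove that the order is in fact \emph{exactly} $n+1$, so that the leading coefficient does not accidentally cancel. I expect to handle this exactly as in the even case: in Section~\ref{sec:modular-diff-equat} the relevant forms are characterised as distinguished solutions of a modular differential equation, and the structure of that equation pins down the precise vanishing orders at $i\infty$ and forces the top coefficient to be nonzero. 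This nonvanishing is the main obstacle; the remainder is the translation between the poles of $W$ and the zeros of $U$ carried out above.
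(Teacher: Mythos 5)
Your proposal is correct and follows essentially the same route as the paper: the construction of Section~\ref{sec:determining-psi-odd} together with Propositions~\ref{prop4.1} and~\ref{prop-FFhat} gives the eigenfunction and Schwartz properties, the translation between the Laurent coefficients $a_m,b_m$ of $W$ at $s=2m$ and the (double) zeros of $U=-4\sin(\tfrac\pi2 s)^2W$ is exactly the paper's mechanism, and the one genuinely nontrivial point you flag --- that the pole of $\chi_1^{(k)}Y_n^{(k)}(j)+\chi_2^{(k)}Z_n^{(k)}(j)$ has order \emph{exactly} $n+1$, i.e.\ $a_{n_-}\neq0$ --- is precisely what the paper supplies via the exact vanishing orders in \eqref{eq:phiw-ord}--\eqref{eq:phiwS-ord} established by the modular differential equations of Section~\ref{sec:diff-equat-gamma}. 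Your bookkeeping of the $z$-linear part (polar order $n+\ell=n_--1$, hence $b_m=0$ for $m\geq n_-$) and of the constant part (polar order at most $n_-$) matches the paper's use of \eqref{eq:phiwT-ord} and \eqref{eq:phiw-ord}.
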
 
  \begin{proof}
    It remains to show that $F'_{-}(\sqrt{2n_{-}})\neq0$. This follows from the
    discussion of vanishing orders in Section~\ref{sec:diff-equat-gamma},
    especially equation \eqref{eq:phiw-ord} and the discussion following. More
    precisely, \eqref{eq:phiw-ord} implies that $a_{n_-}\neq0$ in
    \eqref{eq:psi-infty}, whereas \eqref{eq:phiwT-ord} implies that
    $b_{n_-}=0$. This shows by \eqref{eq:W} that $W$ has a simple pole at
    $s=2n_-$, which gives a simple zero of $U$ at this point.
  \end{proof}
  \begin{remark}\label{rem:extra-odd}
    Notice that by \eqref{optorder} the order increases by $2$, if $n$
    increases by $1$. Together with \eqref{eqmin} this gives that for certain
    $d\pmod{48}$ the order of vanishing of $\psi$ is one more than
    required. This means that for these values of the dimension there is one
    extra degree of freedom. This is the case for dimensions
  \begin{equation}
    \label{eq:extra-odd}
    d\equiv\{0,4,16,20,32,36\}\pmod{48}.
  \end{equation}
\end{remark}

\begin{table}
  \setlength{\tabulinesep}{1mm}
  \begin{tabu}[h]{|l|>{\raggedleft}p{6mm}|>{\raggedleft}p{6mm}|>{\raggedleft}p{5mm}|>{\raggedright}p{2.2cm}|>{\raggedright}p{2.2cm}|>{\raggedright}p{2.2cm}|}
    \cline{2-7}
    \multicolumn{1}{c}{}&\multicolumn{2}{|c|}{$d$}&$n$&\small$X_n^{(k)}(w)$&\small$Y_n^{(k)}(w)$&\small$Z_n^{(k)}(w)$\\
    \hline \multirow{2}{*}[2mm]{$k=0$} &
    $4$&$28$&$0$&\tiny$2$&\tiny$1$&\tiny$0$\\\cline{2-7}
    &$52$&$76$&$1$&\tiny$120\times (7
    w+384)$&\tiny$63 w+171776$&\tiny$91392$\\\hline
    \hline \multirow{2}{*}[2mm]{$k=1$} &
    &$24$&$0$&\tiny$0$&\tiny$0$&\tiny$1$\\\cline{2-7} &
    $48$&$72$&$1$&\tiny$840$&\tiny$514304-840 w$&\tiny$63 w+131584$\\\hline
    \hline \multirow{2}{*}[2mm]{$k=2$} &
    $20$&$44$&$0$&\tiny$6144$&\tiny$5 w+8192$&\tiny$-1280$ \\\cline{2-7}
    &$68$&$92$&$1$&\tiny$53760\times (3
    w+512)$&\tiny$33 w^2+202688 w+117014528$ &\tiny$-256\times (33 w+88256)$
    \\\hline \hline \multirow{2}{*}[2mm]{$k=3$} &
    $16$&$40$&$0$&\tiny$1536$&\tiny$5 w-9856$&\tiny$640$ \\\cline{2-7}
    &$64$&$88$&$1$&\tiny$215040\times (3 w+128)$
    &\tiny$231 w^2-26752 w-1267400704$ &\tiny$128\times (231 w+1002752)$
    \\\hline \hline \multirow{2}{*}[2mm]{$k=4$} &
    &$12$&$-1$&\tiny$0$&\tiny$w+768$&\tiny$-256$\\\cline{2-7}
    &$36$&$60$&$0$&\tiny$7864320$&\tiny$-7 w^2-14080
    w-3670016$&\tiny$256 (7 w+8704)$ \\\hline
    \hline \multirow{2}{*}[2mm]{$k=5$} &
    &$8$&$-1$&\tiny$0$&\tiny$w+1408$&\tiny$-256$\\\cline{2-7}
    &$32$&$56$&$0$&\tiny$55050240$&\tiny$-35 w^2-19456 w+89587712$
    &\tiny$256\times (35 w-29824)$
    \\\hline
  \end{tabu}
  \caption{The polynomials $X_n^{(k)}$, $Y_n^{(k)}$, and $Z_n^{(k)}$; the
    dimensions in the left column are those covered by
    Remark~\eqref{eq:extra-odd}.}
  \label{tab:XYZ1}
\end{table}


\section{Modular differential equations}
\label{sec:modular-diff-equat}
In Section~\ref{sec:determining-psi} we discussed the existence of the form
$\psi(Sz)z^{\frac d2-2}$ given by \eqref{eq:psi-quasi} in
Proposition~\ref{prop-even} by a simple dimension count. A similar argument was
used in Section~\ref{sec:determining-psi-odd} to obtain certain modular forms
for $\Gamma(2)$ that satisfy the requirements of Proposition~\ref{prop4.1}. In
the present section we give a direct construction by an explicit linear
recurrence, which we obtain via modular differential equations. For this
purpose we set
\begin{equation}
  \label{eq:f-def}
  f_{12n+2k+4}=\Delta^{n}\left(\omega_{k+2}P_n^{(k)}(j)-
    2\omega_{k+1}E_2Q_n^{(k)}(j)+\omega_{k}E_2^2R_n^{(k)}(j)\right),
\end{equation}
where $P_n^{(k)}$, $Q_n^{(k)}$, and $R_n^{(k)}$ are the polynomials used in the
ansatz \eqref{eq:psi-ansatz} studied in Section~\ref{sec:determining-psi}.
Similarly, we set
\begin{equation}
  \label{eq:phi-def}
  \phi_{12n+2k+12}=\Delta^{n+1}\left(\omega_kX_n^{(k)}\llambda+
    \chi_1^{(k)}Y_n^{(k)}(j)+
  \chi_2^{(k)}Z_n^{(k)}(j)\right),
\end{equation}
where $X_n^{(k)}$, $Y_n^{(k)}$, and $Z_n^{(k)}$ are the polynomials used in the
ansatz \eqref{psiform} in Section~\ref{sec:determining-psi-odd}.  In this
section we use the weight $w=12n+2k+4$ of the form $f_w$ and $w=12n+2k+12$ of
the form $\phi_w$ as the parameter.

\subsection{Differential equations for quasimodular forms}
\label{sec:diff-equat-quasi}
The quasimodular form $f_w$ of weight $w$ and depth $2$ is then given by the
three requirements
\begin{align}
  \label{eq:fw-ord}
  f_w&=\mathcal{O}(q^{3n+a(k)-1})=\mathcal{O}(q^{\lfloor\frac w4\rfloor-1})\\
  g_w&=\Delta^n\left(\omega_{k+1}Q_n^{(k)}(j)-\omega_{k}E_2R_n^{(k)}(j)\right)=
  \mathcal{O}(q)\label{eq:gw-ord}\\
  h_w&=\Delta^n\omega_kR_n^{(k)}(j)=\mathcal{O}(1)\label{eq:hw-ord}.
\end{align}
The third equation \eqref{eq:hw-ord} is of course trivially satisfied, we
mention it only, because we will show in the sequel that
\eqref{eq:fw-ord}--\eqref{eq:hw-ord} give the exact order for $z\to i\infty$ of
the functions $f_w$, $g_w$, and $h_w$ obtained by our construction and that the
solution is uniquely characterised by these conditions. We notice that forms
satisfying $f_w=\mathcal{O}(q^{\lfloor\frac w4\rfloor})$ are called
\emph{extremal} quasimodular forms; they were studied in
\cite{Kaneko_Koike2006:extremal_quasimodular_forms,
  Yamashita2010:construction_extremal_quasimodular}. We will adapt the methods
used in these two papers to our situation.

In \cite{Kaneko_Nagatomo_Sakai2017:third_order_modular} third order linear
differential equations are characterised, whose solution set is invariant under
the modular group. Especially, several cases of third order linear differential
equations are shown to have modular or quasimodular solutions. In
\cite{Grabner2020:quasimodular_forms} modular differential equations of orders
$\leq5$ having quasimodular solutions are fully characterised. Such
differential equations are especially useful for finding quasimodular forms
$f_w$ with prescribed behaviour of $f_w$, $g_w$ and $h_w$ for $q\to0$, because
the corresponding orders have to be solutions of the indicial equation. In this
section we will present this approach to finding the quasimodular forms
satisfying the requirements of Proposition~\ref{prop-even}.

Assuming the defining properties of the forms $f_w$ for even weights $w\geq8$,
there exist coefficients $a_w,b_w,c_w$ such that
\begin{equation}
  \label{eq:f_w-recurr}
  f_{w+4}=a_wE_4f_w+b_wE_4^2f_{w-4}+c_w\Delta f_{w-8}.
\end{equation}
This comes from considering the orders of the three forms on the right hand
side: the last two terms have the same orders for $q\to0$, thus the
coefficients $b_w$ and $c_w$ can be chosen so that the order of the sum
$b_wE_4^2f_{w-4}+c_w\Delta f_{w-8}$ equals the order of $E_4f_w$. Then $a_w$
can be chosen to again increase the order by $1$. Since the corresponding
functions $g_w$ and $h_w$ satisfy the same recurrence relation, these functions
automatically satisfy \eqref{eq:gw-ord} and \eqref{eq:hw-ord} by induction.  At
this moment this argument involves some heuristics, namely that there is no
higher order of vanishing in any term than expected. We will make this rigorous
by showing that the solutions of the linear recurrence satisfy certain
differential equations, which show that they have precisely the assumed order
of vanishing. This will then allow us to find values for $a_w$, $b_w$, and
$c_w$ and suitable initial values for $f_w$.

For the purpose of the proof we recall the definition of the Serre derivative
(see~\cite{Zagier2008:elliptic_modular_forms})
\begin{equation}
  \label{eq:serre}
  \partial_wf=f'-\frac w{12}E_2f.
\end{equation}
Notice that the Serre derivative $\partial_w$ applied to a modular form of
weight $w$ gives a modular form of weight $w+2$; similarly, the Serre
derivative $\partial_{w-2}$ applied to a quasimodular form of weight $w$ and
depth $2$ gives a quasimodular form of weight $w+2$ and depth $\leq2$; this
can be seen from
\begin{equation}
  \label{eq:serre-quasi}
  \begin{split}
    &\partial_{w-2}\left(A_w+E_2B_{w-2}+E_2^2C_{w-4}\right)=
    \partial_wA_w-\frac1{12}E_4B_{w-2}\\
    +&E_2\left(\frac16A_w+\partial_{w-2}B_{w-2}-\frac16E_4C_{w-4}\right)\\
    +&E_2^2\left(\frac1{12}B_{w-2}+\partial_{w-4}C_{w-4}\right),
  \end{split}
\end{equation}
where $A_w$, $B_{w-2}$, and $C_{w-4}$ are modular forms of respective weights
$w$, $w-2$, and $w-4$.

Furthermore, we recall the definition of the Rankin-Cohen bracket
(see~\cite{Zagier1994:modular_forms_differential})
\begin{equation}
  \label{eq:Rankin-Cohen}
  \left[f,g\right]_n^{(k,\ell)}=\sum_{i=0}^n(-1)^i\binom{n+k-1}{n-i}
  \binom{n+\ell-1}i f^{(i)}g^{(n-i)}.
\end{equation}
The cases $w\equiv0\pmod4$ (this corresponds to $k=0,2,4$) and $w\equiv2\pmod4$
(this corresponds to $k=1,3,5$) have to be treated slightly differently. In the
second case the underlying modular differential equation takes a somehow
non-standard form, which was not covered by the general study
\cite{Kaneko_Nagatomo_Sakai2017:third_order_modular}.
\goodbreak

\begin{proposition}\label{prop-equiv-0}
  Consider the sequence $(f_w)_w$, \textup{(}$w\equiv0\pmod4$,
  $w\geq8$\textup{)} given by the initial elements
\begin{equation}\label{eq:f-init}
\begin{split}
  f_8&=E_4^2-2E_2E_6+E_2^2E_4=\frac{36}5E_4''\\
  f_{12}&=\frac1{6000} (E_6^2 - 2 E_2 E_4 E_6 + E_2^2 E_4^2)=
  \frac1{3000}(E_4^2)''-\frac4{25}\Delta\\
  f_{16}&=\frac1{2540160000}\left(-25E_4^4 \!+ 49 E_4E_6^2 \!- 48 E_2 E_4^2 E_6
    \!+ E_2^2 (49 E_4^3 \!- 25 E_6^2)\right)\\
  &=\frac{(25E_6^2-49E_4^3)''}{2751840000}-\frac{\Delta E_4}{45500}  
\end{split}
\end{equation}
and the recurrence
\begin{equation}
  \label{eq:f0mod4-recurr}
  \begin{split}
    f_{w+4}&=\frac1{16000(w+2)(w-3)(w-5)(w-9)(w-10)(w-11)}\\
    &\times\Bigl(200(w-8)(w-9)(w^2-15w+38)E_4f_w\\&-
      \frac58(w-8)(w-12)E_4^2f_{w-4}+
      \Delta f_{w-8}\Bigr).
    \end{split}
\end{equation}
Then for every $w\equiv0\pmod4$, $w\geq8$ the quasimodular form $f_w$
satisfies \eqref{eq:fw-ord} and the corresponding functions $g_w$ and $h_w$
satisfy \eqref{eq:gw-ord} and \eqref{eq:hw-ord}, where $q^{\frac w4-1}$, $q$,
and $1$ are the exact orders of these functions for $q\to0$.
\end{proposition}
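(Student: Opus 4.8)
The plan is to upgrade the heuristic order count preceding the statement into a rigorous one by exhibiting a third-order modular differential equation satisfied by the forms $f_w$ and reading the orders off from its indicial equation at the cusp. I would begin with two reductions. Since \eqref{eq:f0mod4-recurr} multiplies only by the genuine modular forms $E_4$, $E_4^2$ and $\Delta$, the recurrence acts identically on each graded piece of the depth filtration; hence $g_w$ and $h_w$ satisfy the \emph{same} three-term recurrence as $f_w$, and it suffices to propagate exact orders along one coupled triple, seeded by the base cases $w=8,12,16$ which are checked directly from \eqref{eq:f-init}. Moreover the exact order $1$ of $h_w$ is immediate from its shape $h_w=\Delta^n\omega_kR_n^{(k)}(j)$: the pole of $j^n$ (and, when $k=1$, of $\omega_1$) precisely compensates the zero of $\Delta^n$, so that the constant term is a nonzero product of leading coefficients. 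Thus the substantive claims are the exact orders of $f_w$ and $g_w$.

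The heart of the matter is to recognise \eqref{eq:f0mod4-recurr} as the contiguity relation of a one-parameter family of third-order modular linear differential equations, in the spirit of the treatment of extremal quasimodular forms in \cite{Kaneko_Koike2006:extremal_quasimodular_forms, Yamashita2010:construction_extremal_quasimodular, Grabner2020:quasimodular_forms}. Using the Serre derivative \eqref{eq:serre} and its action on depth-two forms \eqref{eq:serre-quasi}, I would prove by induction on $w$ that the form produced by the recurrence is annihilated by an operator of the form
\begin{equation*}
  \partial_{w+4}\partial_{w+2}\partial_w f_w-\alpha_w E_4\,\partial_w f_w-\beta_w E_6 f_w=0,
\end{equation*}
with $\alpha_w,\beta_w$ rational in $w$; the three-term recurrence is then exactly the relation linking solutions of consecutive members of this family after multiplication by $E_4$, $E_4^2$ and $\Delta$. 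I expect this identification to be the main obstacle: one must match the denominator $16000(w+2)(w-3)(w-5)(w-9)(w-10)(w-11)$ and the two numerators of \eqref{eq:f0mod4-recurr} against the coefficient functions of the operator, and verify that the excluded weights $w\in\{-2,3,5,9,10,11\}$ are precisely the resonances at which the normalisation degenerates, so that no spurious cancellation occurs along the induction.

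Granting the differential equation, the exactness of the order of $f_w$ follows from a Frobenius analysis at the cusp. As $q\to0$ one has $E_2,E_4,E_6\to1$ and $\partial_m\sim\theta-\tfrac m{12}$ with $\theta=q\tfrac{d}{dq}$, so the equation degenerates to an indicial cubic in the exponent $\rho$; I would compute its three roots explicitly and show that their largest value is exactly $w/4-1$. Since every nonzero solution begins at one of the indicial exponents, no solution can vanish to order greater than $w/4-1$, while the cancellations built into \eqref{eq:f0mod4-recurr} already force $f_w=\mathcal{O}(q^{w/4-1})$; hence the order of $f_w$ is exactly $w/4-1$.

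Finally, the exact order $1$ of $g_w$ is obtained by the same circle of ideas. Because $g_w$ obeys the identical recurrence, I would propagate its nonzero $q^1$-coefficient inductively, using the relation between $g_w$ and the now-controlled leading behaviour of $f_w$ and $h_w$ supplied by \eqref{eq:serre-quasi} to exclude an accidental vanishing at order $q$. In the notation of \eqref{eq:psi-infty} this amounts to the top coefficient $a_{n}$ being nonzero while $b_{n}$ vanishes, which is exactly the input invoked in the proof of Theorem~\ref{thm-even} to conclude that $W$ has a simple pole and hence that $U$ has a simple zero at the last sign change.
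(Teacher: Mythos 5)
Your proposal is correct and follows essentially the same route as the paper: both arguments establish by induction along the recurrence \eqref{eq:f0mod4-recurr} that $f_w$ satisfies a third-order modular differential equation (the paper's \eqref{eq:ODE1}, written via Serre derivatives as $\partial_{w+2}\partial_w\partial_{w-2}f-\frac{3w^2-36w+140}{144}E_4\partial_{w-2}f-\frac{(w-2)(w-5)(w-14)}{864}E_6f=0$, with the descent step supplied by a Rankin--Cohen bracket identity), and then read the exact orders $q^{w/4-1}$, $q$, $1$ off the indicial roots $\mu\in\{0,1,\frac{w-4}{4}\}$, using that $g_w$ and $h_w$ obey the same recurrence. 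The only differences are cosmetic (your Serre-derivative indexing and your direct leading-coefficient argument for $h_w$), not substantive.
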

\begin{proof}
  We will prove the proposition by showing that $f_w$ satisfies the
  differential equation
\begin{equation}
  \label{eq:ODE1}
  \begin{split}
 f'''&-\frac{w}4E_2f''+\left(\frac{w-4}4E_4+\frac{w(w-1)}4E_2'\right)f'\\
    &-\left(\frac{(w-2)(w-4)}{16}E_4'+\frac{w(w-1)(w-2)}{24}E_2''\right)f=0.
  \end{split}
\end{equation}
This is the case $\alpha=\frac{w-4}4$ and $\beta=0$ of the general form of
linear differential equations admitting modular and quasimodular solutions
given in \cite[Theorem~1]{Kaneko_Nagatomo_Sakai2017:third_order_modular}. We
will write \eqref{eq:ODE1}\textsubscript{$w$} to indicate the dependence on the
parameter $w$ (so, $f_{w-4}$ is a solution of
\eqref{eq:ODE1}\textsubscript{$w-4$}). The indicial equation of \eqref{eq:ODE1}
then reads as
\begin{equation}
  \label{eq:index3}
  \mu^3-\frac{w}4\mu^2+\frac{w-4}4\mu=
  \mu\left(\mu-1\right)\left(\mu-\frac{w-4}4\right)=0.
\end{equation}
The roots of this equation correspond to the exponents in
\eqref{eq:fw-ord}--\eqref{eq:hw-ord}.

The differential equation \eqref{eq:ODE1} can be rewritten in terms of the
Serre derivative as
\begin{equation}
  \label{eq:ODE1Serre}
  \begin{split}
    \partial_{w+2}\partial_{w}\partial_{w-2}f&-
    \frac{3w^2-36w+140}{144}E_4\partial_{w-2}f\\
    &-\frac{(w-2)(w-5)(w-14)}{864}E_6f=0.
  \end{split}
\end{equation}

The following lemma can be checked by direct computation.
\begin{lemma}\label{lem-ode1}
  If $F_w$ is a solution of \eqref{eq:ODE1}\textsubscript{$w$}, then
the function
\begin{equation}
  \label{eq:soluw-4}
  \frac1{\Delta}\left(\left[F_w,E_4\right]_2^{(w-2,4)}+
    \frac53\left[F_w,E_6\right]_1^{(w-2,6)}\right)
\end{equation}
is a solution of \eqref{eq:ODE1}\textsubscript{$w-4$}.  
\end{lemma}

We now proceed by induction to show that the right-hand-side of
\eqref{eq:f0mod4-recurr} is a solution of \eqref{eq:ODE1}\textsubscript{$w+4$}.
Assume that we have proved that $f_m$ is a solution of
\eqref{eq:ODE1}\textsubscript{$m$} and that
\begin{equation}\label{eq:delta-f_m-4}
  \Delta f_{m-4}=\left[f_{m},E_4\right]_2^{(m-2,4)}+
    \frac53\left[f_{m},E_6\right]_1^{(m-2,6)}
\end{equation}
for $12\leq m\leq w$, $m\equiv0\pmod4$. Inserting \eqref{eq:delta-f_m-4} for
$m=w-4$ and $m=w$ into \eqref{eq:f0mod4-recurr} and using that $f_{w-4}$ and
$f_w$ are solutions of \eqref{eq:ODE1}\textsubscript{$w-4$} and
\eqref{eq:ODE1}\textsubscript{$w$} gives that the right-hand-side of
\eqref{eq:f0mod4-recurr} equals
\begin{equation}\label{eq:f-diff-recurr}
  \frac{(w-5)(w-6)E_4f_w-
    36\partial_w\partial_{w-2}f_w}{120(w+2)(w-3)(w-5)(w-10)}.
\end{equation}
This can be checked to be a solution of
\eqref{eq:ODE1}\textsubscript{$w+4$}. In order to complete the induction step,
we verify \eqref{eq:delta-f_m-4} for $m=w+4$ inserting the expression
\eqref{eq:f-diff-recurr}. This shows that indeed \eqref{eq:f0mod4-recurr} holds.

The solution of the recurrence \eqref{eq:f0mod4-recurr} is thus also a solution
of the differential equation \eqref{eq:ODE1}. Since $f_w$ vanishes to at least
second order in $q$ for $q\to0$, it can only be the solution of \eqref{eq:ODE1}
associated to the root $\mu=\frac{w-4}4$ of the indicial equation. Thus
\eqref{eq:fw-ord} holds. Similarly, since $g_w$ and $h_w$ satisfy the same
linear recurrence relation, we prove that \eqref{eq:gw-ord} and
\eqref{eq:hw-ord} hold.
\end{proof}


\begin{proposition}\label{prop-equiv-2}
  Consider the sequence $(f_w)_w$, \textup{(}$w\equiv2\pmod4$,
  $w\geq10$\textup{)} given by the initial elements
\begin{equation}\label{eq:f-init-2}
\begin{split}
  f_{10}&=-E_4E_6+2E_2E_4^2-E_2^2E_6=-\frac{24}7E_6''\\
  f_{14}&=-\frac1{8400} \left(E_4^2E_6-E_2(E_4^3+E_6^2)+E_2^2E_4E_6\right)\\
  &=
  -\frac{3}{19250}(E_4E_6)''-\frac{36}{875}E_2\Delta\\
  f_{18}&=-\frac1{237600}\left(5E_4^3E_6+7E_6^3-E_2(5E_4^4+19E_4E_6^2)
    +12E_2^2E_4^2E_6\right)\\
  &=-\frac1{28875}(E_4^2E_6)''+\frac{2 \Delta (181 E_6-185 E_2E_4)}{9625}
\end{split}
\end{equation}
and the recurrence
\begin{equation}
  \label{eq:f2mod4-recurr}
  \begin{split}
    f_{w+4}&=\frac1{16000(w-3)(w-4)(w-5)(w-9)(w-11)(w-16)}\\
    &\times\Bigl(200(w-9)(w-10)(w^2-21w+92)E_4f_w\\
    &-\frac58(w-10)(w-14)E_4^2f_{w-4}+\Delta f_{w-8}
    \Bigr).
    \end{split}
\end{equation}
Then for every $w\equiv2\pmod4$, $w\geq10$ the quasimodular form $f_w$
satisfies \eqref{eq:fw-ord} and the corresponding functions
$g_w$ and $h_w$ satisfy \eqref{eq:gw-ord} and \eqref{eq:hw-ord}, where
$q^{\frac{w-6}4}$, $q$, and $1$ are the exact orders of these functions for
$q\to0$.
\end{proposition}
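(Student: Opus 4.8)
The plan is to mirror the proof of Proposition~\ref{prop-equiv-0} step for step; the only genuine novelty is the determination of the correct third order differential equation, which in this congruence class falls outside the family classified in \cite{Kaneko_Nagatomo_Sakai2017:third_order_modular} and so must be found by hand.

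First I would pin down the candidate equation, which I denote $\mathrm{ODE}_w$. The asserted exact orders $q^{\frac{w-6}4}$, $q$, and $1$ of $f_w$, $g_w$, and $h_w$ force the indicial equation
\begin{equation*}
  \mu\left(\mu-1\right)\left(\mu-\frac{w-6}4\right)=
  \mu^3-\frac{w-2}4\mu^2+\frac{w-6}4\mu=0 ,
\end{equation*}
which fixes the $q^0$-part of the coefficients just as \eqref{eq:index3} does in the even case. I would then seek the equation in a Serre-derivative form parallel to \eqref{eq:ODE1Serre}, built from $\partial_{w+2}\partial_w\partial_{w-2}f$ together with a term $E_4\,\partial_{w-2}f$ and a term $E_6f$ carrying polynomial-in-$w$ coefficients. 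These coefficients are determined by the twin requirements that the indicial equation above hold and that the solution space be invariant under the modular group with the weight-$w$ automorphy factor; since the shortcut through \cite{Kaneko_Nagatomo_Sakai2017:third_order_modular} is unavailable here, this matching is where I expect to have to compute, and it is the main obstacle.

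Next I would establish the analogue of Lemma~\ref{lem-ode1}: that for a suitable constant $c$ (the counterpart of $\tfrac53$) the Rankin--Cohen combination
\begin{equation*}
  \frac1\Delta\left(\left[F_w,E_4\right]_2^{(w-2,4)}+
    c\left[F_w,E_6\right]_1^{(w-2,6)}\right)
\end{equation*}
sends a solution $F_w$ of $\mathrm{ODE}_w$ to a solution of $\mathrm{ODE}_{w-4}$. As in the even case this is a direct verification, most conveniently carried out in the Serre-derivative formulation using \eqref{eq:serre-quasi} and the weight-raising property of the brackets, with $c$ fixed by forcing the output to satisfy $\mathrm{ODE}_{w-4}$.

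With the lemma available the induction is formally identical to the even case. I would take $f_{10}$, $f_{14}$, $f_{18}$ from \eqref{eq:f-init-2} as base data and check directly that they solve $\mathrm{ODE}_{10}$, $\mathrm{ODE}_{14}$, $\mathrm{ODE}_{18}$ and the bracket identity $\Delta f_{m-4}=[f_m,E_4]_2^{(m-2,4)}+c\,[f_m,E_6]_1^{(m-2,6)}$. Assuming this identity for $m=w-4$ and $m=w$, substituting into the recurrence \eqref{eq:f2mod4-recurr} and using that $f_{w-4}$ and $f_w$ solve their equations collapses the right-hand side to a single expression of the shape \eqref{eq:f-diff-recurr}, namely a rational-in-$w$ multiple of a linear combination of $E_4f_w$ and $\partial_w\partial_{w-2}f_w$; one then verifies that this solves $\mathrm{ODE}_{w+4}$ and re-establishes the bracket identity for $m=w+4$, closing the loop. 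Finally the order statement follows from the indicial data as before: since the recurrence raises the vanishing order by one at each step, $f_w$ vanishes to order at least $2$ at $i\infty$ for $w\geq14$, so among the three exponents $0$, $1$, $\frac{w-6}4$ (which are distinct in that range) it must be attached to the root $\mu=\frac{w-6}4$ and hence has exactly that order, while $g_w$ and $h_w$, solving the same recurrence and vanishing to orders at least $1$ and $0$, are attached to $\mu=1$ and $\mu=0$, giving \eqref{eq:gw-ord} and \eqref{eq:hw-ord}.
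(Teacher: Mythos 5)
Your overall strategy (indicial data, a Rankin--Cohen descent lemma, induction on the recurrence, and reading off the exact orders from the distinct indicial roots) is the same as the paper's, and the final order argument is fine. But there is a genuine gap at the step you yourself flag as the main obstacle: the shape you propose for the differential equation cannot work. You look for an equation of the form $\partial_{w+2}\partial_w\partial_{w-2}f+\alpha E_4\,\partial_{w-2}f+\beta E_6 f=0$, i.e.\ the same shape as \eqref{eq:ODE1Serre}. For any equation of that shape the coefficient of $\mu^2$ in the indicial polynomial is forced: since $E_2,E_4,E_6=1+\mathcal{O}(q)$ and $\partial_k q^\mu=(\mu-\tfrac k{12})q^\mu+\mathcal{O}(q^{\mu+1})$, the indicial polynomial is $\prod_{j}(\mu-\tfrac{w+2j}{12})+\alpha(\mu-\tfrac{w-2}{12})+\beta$ with $j\in\{-1,0,1\}$, so its roots always sum to $\tfrac w4$ regardless of $\alpha,\beta$. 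The roots you need, $0$, $1$, and $\tfrac{w-6}4$, sum to $\tfrac{w-2}4$. Hence no choice of $\alpha,\beta$ produces the required indicial equation; the matching you defer to computation is not merely laborious but impossible within your ansatz. Adding a genuine second-order correction $\gamma\,\partial_w\partial_{w-2}f$ would fix the root sum, but its coefficient would have to be a holomorphic modular form of weight $2$ for $\Gamma$, and $\mathcal{M}_2(\Gamma)=0$.

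The paper's resolution, which is the real content of this case, is to multiply the whole equation by $E_6$: see \eqref{eq:ODE2} and its Serre form \eqref{eq:ODE2-Serre}, whose leading term is $E_6\partial_{w+2}\partial_w\partial_{w-2}f$ and which carries the second-order term $\tfrac12E_4^2\partial_w\partial_{w-2}f$ (weight $8=6+2$, so now admissible). The factor $\tfrac12$ shifts the root sum from $\tfrac w4$ to $\tfrac w4-\tfrac12=\tfrac{w-2}4$ as required. This is exactly why the equation falls outside the family of \cite{Kaneko_Nagatomo_Sakai2017:third_order_modular}: you correctly note that the classification there is unavailable, but your ansatz stays inside the classified (monic) family all the same. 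Once the correct equation \eqref{eq:ODE2-Serre} is in hand, the rest of your argument goes through as you describe; the descent lemma holds with the same constant $\tfrac53$ as in the even case (Lemma~\ref{lem-ode2}), and the collapsed recurrence is \eqref{eq:f-diff-recurr-2}.
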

\begin{proof}
    We will prove the proposition by showing that $f_w$ satisfies the
  differential equation
\begin{equation}
  \label{eq:ODE2}
  \begin{split}
 &E_6f'''-\left(\frac{w-2}4E_4^2+\frac w2E_6'\right)f''\\
    +&\left(\frac{w-6}4E_4E_6+\frac{(w-1)(w-2)}8E_4E_4'+
      \frac{w(w-1)}{14}E_6''\right)f'\\
    -&\left(\frac{(w-2)(w-6)}{24}E_4E_6'+\frac{5(w-8)(w-9)(w-10)}{384}(E_4')^2
    \right.\\
    &\left.+\frac{w^3+105w^2-1162w+3576}{480}E_4E_4''+
      \frac{w(w-1)(w-2)}{336}E_6'''\right)f=0.
  \end{split}
\end{equation}
The extra factor $E_6$ in front of the highest derivative is motivated by the
computations in \cite{Yamashita2010:construction_extremal_quasimodular}. The
indicial equation is then given by
\begin{equation*}
  \mu^3-\frac{w-2}4\mu^2+\frac{w-6}4\mu=
  \mu(\mu-1)\left(\mu-\frac{w-6}4\right)=0.
\end{equation*}
The equation \eqref{eq:ODE2} can be written in terms of Serre derivatives as
\begin{equation}
  \label{eq:ODE2-Serre}
  \begin{split}
    &E_6\partial_{w+2}\partial_w\partial_{w-2}f+
    \frac12E_4^2\partial_w\partial_{w-2}f-
    \frac{3w^2-48w+224}{144}E_4E_6\partial_{w-2}f\\
    &- \left(\frac{3w^2-48w+224}{288}E_4^3+
      \frac{w^3-33w^2+300w-896}{864}E_6^2\right)f=0
  \end{split}
\end{equation}
Then a relation similar to Lemma~\ref{lem-ode1} holds.
\begin{lemma}\label{lem-ode2}
  If $F_w$ is a solution of \eqref{eq:ODE2}\textsubscript{$w$}, then
the function
\begin{equation}
  \label{eq:soluw-4-1}
  \frac1{\Delta}\left(\left[F_w,E_4\right]_2^{(w-2,4)}+
    \frac53\left[F_w,E_6\right]_1^{(w-2,6)}\right)
\end{equation}
is a solution of \eqref{eq:ODE2}\textsubscript{$w-4$}.  
\end{lemma}

We proceed in a similar manner as in the proof of
Proposition~\ref{prop-equiv-0}. The main equation after inserting
\eqref{eq:soluw-4-1} into the recurrence and using that $f_w$ is a solution of
\eqref{eq:ODE2} reads
\begin{equation}
  \label{eq:f-diff-recurr-2}
  f_{w+4}=\frac{(w-8)(w-9)E_4f_w-36\partial_w\partial_{w-2}f_w}
  {120(w-3)(w-4)(w-5)(w-16)}
\end{equation}
This can be checked again to satisfy \eqref{eq:ODE2}\textsubscript{w+4} and
\eqref{eq:delta-f_m-4}\textsubscript{$w+4$}. The remaining arguments are also
similar.
\end{proof}
\subsection{Differential equations for $\Gamma(2)$-modular
  forms}\label{sec:diff-equat-gamma}
The forms $\phi_w$ are given by the requirements
\begin{align}
  \label{eq:phiw-ord}
  \phi_w(z)&=\mathcal{O}(1)\\
  \phi_w(z)-\phi_w(Tz)&=\mathcal{O}(q)\label{eq:phiwT-ord}\\
  z^{-w}\phi_w(Sz)&=\mathcal{O}(q^{\lfloor\frac w4\rfloor-\frac12}).
  \label{eq:phiwS-ord}
\end{align}
Notice that \eqref{eq:phiwT-ord} is equivalent to $f\Delta^{n+1}$, with $f$ as
in \eqref{eq:psi-sol}, being a cusp form. Furthermore, we will show that the
orders of vanishing for $z\to i\infty$ given in
\eqref{eq:phiw-ord}--\eqref{eq:phiwS-ord} are exact. Similarly to
\eqref{eq:f_w-recurr} we expect a linear recurrence equation for $\phi_w$ by
the same heuristic argument. The fact that such recurrences indeed hold, is the
content of the following two propositions. Their proofs follow exactly the same
lines as the proofs of Propositions~\ref{prop-equiv-0} and~\ref{prop-equiv-2}.

\begin{proposition}\label{prop-phi-equiv-0}
  Consider the sequence $(\phi_w)_w$, \textup{(}$w\equiv0\pmod4$,
  $w\geq8$\textup{)} given by the initial elements
\begin{equation}\label{eq:phi-init}
\begin{split}
  \phi_8&=\theta_{01}^{12} \left(\theta_{01}^4+2 \theta_{10}^4\right)\\
  \phi_{12}&=\frac8{175} \Delta \llambda
  +\frac{\theta_{01}^{12}}{11200}\left(2 \theta_{10}^{12}+
    3 \theta_{01}^{4} \theta_{10}^{8}+
    3 \theta_{01}^{8} \theta_{10}^4+\theta_{01}^{12}
  \right)\\
  \phi_{16}&=\frac1{231000} \Delta E_4 \llambda\\&+
  \frac{\theta_{01}^{12}}{1419264000}
  \left(24\theta_{10}^{20}+60 \theta_{01}^{4} \theta_{10}^{16}
    +68 \theta_{01}^{8} \theta_{10}^{12}+42 \theta_{01}^{12} \theta_{10}^8\right)
\end{split}
\end{equation}
and the recurrence (for $w\geq16$)
\begin{equation}
  \label{eq:phi0mod4-recurr}
  \begin{split}
    \phi_{w+4}&=\frac1{16000(w+4)(w-1)(w-3)(w-7)(w-8)(w-9)}\\
    &\times\Bigl(200(w-6)(w-7)(w^2-11w+12)E_4\phi_w\\&-
      \frac58(w-6)(w-10)E_4^2\phi_{w-4}+
      \Delta \phi_{w-8}\Bigr).
    \end{split}
\end{equation}
Then for every $w\equiv0\pmod4$, $w\geq8$ the modular form $\phi_w$ satisfies
\eqref{eq:phiw-ord}, \eqref{eq:phiwT-ord}, and~\eqref{eq:phiwS-ord}, where $1$,
$q$, and $q^{\frac {w-2}4}$ are the exact orders of these functions for
$q\to0$.
\end{proposition}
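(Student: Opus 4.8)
The plan is to follow the proofs of Propositions~\ref{prop-equiv-0} and~\ref{prop-equiv-2} step for step. First I would exhibit a third-order linear differential equation with $\Gamma$-modular coefficients, of the same Kaneko--Nagatomo--Sakai type as \eqref{eq:ODE1}, whose solution space is invariant under the modular group and whose indicial equation at $i\infty$ factors as $\mu(\mu-1)(\mu-\frac{w-2}4)=0$. The three roots $0$, $1$, and $\frac{w-2}4=\lfloor\frac w4\rfloor-\frac12$ are exactly the three exponents prescribed by \eqref{eq:phiw-ord}, \eqref{eq:phiwT-ord}, and \eqref{eq:phiwS-ord}; concretely, the natural candidate is the member of the family with $\alpha=\frac{w-2}4$ and $\beta=0$. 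I would then recast it in Serre-derivative form, as in \eqref{eq:ODE1Serre}, to prepare the induction.

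Next I would prove the weight-lowering lemma parallel to Lemma~\ref{lem-ode1}: if $\Phi_w$ solves the equation at weight $w$, then $\frac1\Delta\bigl([\Phi_w,E_4]_2^{(w-2,4)}+\frac53[\Phi_w,E_6]_1^{(w-2,6)}\bigr)$ solves it at weight $w-4$. This is again a direct Rankin--Cohen computation. With it the induction runs as before: assuming $\phi_m$ solves its equation and $\Delta\phi_{m-4}=[\phi_m,E_4]_2^{(m-2,4)}+\frac53[\phi_m,E_6]_1^{(m-2,6)}$ for $m\le w$, I substitute these into \eqref{eq:phi0mod4-recurr} and, using that $\phi_w$ and $\phi_{w-4}$ satisfy their equations, reduce the right-hand side to a clean expression of the shape $\bigl((\cdots)E_4\phi_w-36\,\partial_w\partial_{w-2}\phi_w\bigr)/(\cdots)$, analogous to \eqref{eq:f-diff-recurr}. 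One then checks that this solves the equation at weight $w+4$ and verifies the bracket relation for $m=w+4$, closing the induction. The base cases $\phi_8,\phi_{12},\phi_{16}$ from \eqref{eq:phi-init} must be checked by hand to solve their respective equations and to realise the stated orders, the theta-function identities and the expansion of $\log\lambda$ handling the bookkeeping.

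To read off the exact orders I would exploit the $\Gamma$-modularity of the coefficients: since the solution space is preserved by the modular action, both $\phi_w(Tz)$ and $z^{-w}\phi_w(Sz)$ are again solutions, hence so is $\phi_w(z)-\phi_w(Tz)$. Thus the three quantities in \eqref{eq:phiw-ord}--\eqref{eq:phiwS-ord} are three solutions of one third-order equation, and the leading $q$-exponent of each nonzero solution must be one of the indicial roots $\{0,1,\frac{w-2}4\}$. Combining this with the a priori lower bounds supplied by the construction---namely \eqref{optorder} after the $\Delta^{n+1}$ normalisation for $z^{-w}\phi_w(Sz)$, and the cusp form condition equivalent to \eqref{eq:phiwT-ord}, whose coefficient of $\log\lambda$ is the cusp form $f\Delta^{n+1}$---and using that the three roots are distinct for the relevant $w$, forces each bound to be attained exactly: $z^{-w}\phi_w(Sz)$ is attached to $\frac{w-2}4$, the difference $\phi_w(z)-\phi_w(Tz)$ to the root $1$, and $\phi_w$ to the root $0$.

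The main obstacle I anticipate is twofold. On the computational side, producing the correct equation together with its Serre-derivative form and the Rankin--Cohen weight-lowering identity is delicate, and the base-case verifications are complicated by the fact that the forms live on $\Gamma(2)$ and carry the transcendental factor $\log\lambda$, so that only half-integer powers of $q$ occur in $z^{-w}\phi_w(Sz)$ (Lemma~\ref{halflem}). The more conceptual difficulty is the order-reading argument above: one must argue cleanly that the $S$- and $T$-translates genuinely span the solution space alongside $\phi_w$, and that no coincidence of exponents or logarithmic solution of the indicial equation spoils the separation of the three orders. This $\Gamma(2)$ geometry, rather than the routine bracket algebra, is where the genuine work lies.
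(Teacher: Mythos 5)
Your proposal follows the paper's proof essentially verbatim: the paper likewise shows that $\phi_w$ satisfies \eqref{eq:ODE1} with parameter $w+2$ (your choice $\alpha=\frac{w-2}{4}$, $\beta=0$), uses the same Rankin--Cohen weight-lowering identity and induction, reduces the recurrence to the relation \eqref{eq:phi-diff-recurr-0}, and reads the exact orders off the indicial roots $0$, $1$, $\frac{w-2}{4}$ exactly as you describe. The only cosmetic discrepancy is that the Serre derivatives carry the shifted weights, $\partial_{w+2}\partial_w\phi_w$ rather than $\partial_w\partial_{w-2}\phi_w$, since $\phi_w$ solves the equation at parameter $w+2$.
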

\begin{proof}
  As pointed out, the proof follows the same lines as  the proof of
  Proposition~\ref{prop-equiv-0}; it only has to be shown that $\phi_w$
  satisfies \eqref{eq:ODE1}\textsubscript{$w+2$}. This is done by showing the
  relation
  \begin{equation}
    \label{eq:phi-diff-recurr-0}
    \phi_{w+4}=\frac{(w-3)(w-4)E_4\phi_w-36\partial_{w+2}\partial_w\phi_w}
    {120(w+4)(w-1)(w-3)(w-8)}.
  \end{equation}
\end{proof}
\begin{proposition}\label{prop-phi-equiv2}
  Consider the sequence $(\phi_w)_w$, \textup{(}$w\equiv2\pmod4$,
  $w\geq10$\textup{)} given by the initial elements
\begin{equation}\label{eq:phi-init-2}
  \begin{split}
    \phi_{10}&=\theta_{01}^{12} \left(5 \theta_{10}^8+
      5 \theta_{01}^4 \theta_{10}^4+
      2 \theta_{01}^8\right)\\
    \phi_{14}&=\frac{\theta_{01}^{20}}{13440} \left(
      7 \theta_{10}^8+ 7 \theta_{01}^4 \theta_{10}^4+
    2 \theta_{01}^8\right)\\
  \phi_{18}&=\frac1{600600} \Delta E_6 \llambda\\&+
  \frac{\theta_{01}^{12}}{1845043200}\Bigl(-12  \theta_{10}^{24}
    -36 \theta_{01}^{4} \theta_{10}^{20}-  13 \theta_{01}^{8} \theta_{10}^{16}
    +34 \theta_{01}^{12} \theta_{10}^{12}\\
    &\qquad\qquad\qquad\quad+68 \theta_{01}^{16} \theta_{10}^8
    +45 \theta_{01}^{20} \theta_{10}^4+10 \theta_{01}^{24}\Bigr)\\
\end{split}
\end{equation}
and the recurrence (for $w\geq18$)
\begin{equation}
  \label{eq:phi2mod4-recurr}
  \begin{split}
    \phi_{w+4}&=\frac1{16000(w-1)(w-2)(w-3)(w-7)(w-9)(w-14)}\\
    &\times\Bigl(200(w-7)(w-8)(w^2-17w+54)E_4\phi_w\\
    &-\frac58(w-8)(w-12)E_4^2\phi_{w-4}+\Delta \phi_{w-8}
    \Bigr).
    \end{split}
\end{equation}
Then for every $w\equiv2\pmod4$, $w\geq10$ the modular form $\phi_w$ satisfies
\eqref{eq:phiw-ord}, \eqref{eq:phiwT-ord}, and~\eqref{eq:phiwS-ord}, where $1$,
$q$, and $q^{\frac{w-4}4}$ are the exact orders of these functions for $q\to0$.
\end{proposition}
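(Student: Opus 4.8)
The plan is to follow the proof of Proposition~\ref{prop-equiv-2} verbatim, the only change being a uniform shift of the weight parameter by $2$. Concretely, I would prove that for every $w\equiv2\pmod4$ with $w\geq10$ the form $\phi_w$ lies in the solution space of the $E_6$-weighted modular differential equation \eqref{eq:ODE2}\textsubscript{$w+2$}. Substituting $w\mapsto w+2$ in the indicial equation attached to \eqref{eq:ODE2} turns its roots $0,1,\frac{w-6}4$ into $0,1,\frac{w-4}4$, and these three exponents are exactly the claimed leading $q$-orders in \eqref{eq:phiw-ord}, \eqref{eq:phiwT-ord}, and \eqref{eq:phiwS-ord}; note that $\lfloor\frac w4\rfloor-\frac12=\frac{w-4}4$ for $w\equiv2\pmod4$. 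Once membership in that solution space is established, the three order statements follow by matching $\phi_w$, its difference $\phi_w(z)-\phi_w(Tz)$, and its transform $z^{-w}\phi_w(Sz)$ to the indicial solutions belonging to the exponents $0$, $1$, and $\frac{w-4}4$, the $S$-transform necessarily realizing the largest exponent.

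The induction is organized as in Proposition~\ref{prop-equiv-2}. The weight-lowering operator of Lemma~\ref{lem-ode2}, applied with its parameter set equal to $w+2$, carries a solution of \eqref{eq:ODE2}\textsubscript{$w+2$} to a solution of \eqref{eq:ODE2}\textsubscript{$w-2$}, so I would propagate the bracket identity
\begin{equation*}
  \Delta\phi_{w-4}=[\phi_w,E_4]_2^{(w,4)}+\frac53[\phi_w,E_6]_1^{(w,6)}
\end{equation*}
as part of the inductive hypothesis, the exact analogue of \eqref{eq:delta-f_m-4}. Inserting this identity at the two relevant weights into the recurrence \eqref{eq:phi2mod4-recurr}, and using that $\phi_w$ and $\phi_{w-4}$ already solve their equations, should collapse the right-hand side to the purely differential expression
\begin{equation*}
  \phi_{w+4}=\frac{(w-6)(w-7)E_4\phi_w-36\,\partial_{w+2}\partial_w\phi_w}
  {120(w-1)(w-2)(w-3)(w-14)},
\end{equation*}
which is the $w\mapsto w+2$ shift of \eqref{eq:f-diff-recurr-2} and the $E_6$-weighted counterpart of \eqref{eq:phi-diff-recurr-0}. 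One then checks directly that this expression solves \eqref{eq:ODE2}\textsubscript{$w+2$} and re-derives the bracket identity at weight $w+4$, closing the induction. To start it, I would verify that the three initial forms $\phi_{10}$, $\phi_{14}$, $\phi_{18}$ of \eqref{eq:phi-init-2} satisfy \eqref{eq:ODE2}\textsubscript{$12$}, \eqref{eq:ODE2}\textsubscript{$16$}, \eqref{eq:ODE2}\textsubscript{$20$} respectively, together with the base instances of the bracket identity feeding \eqref{eq:phi2mod4-recurr}; rewriting \eqref{eq:ODE2} through Serre derivatives as in \eqref{eq:ODE2-Serre} reduces these to finite computations.

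The step I expect to be the genuine obstacle---and the only real departure from Proposition~\ref{prop-equiv-2}, even though the lines are the same---is confirming that $\phi_w$ is annihilated by \eqref{eq:ODE2}\textsubscript{$w+2$} although it carries the transcendental factor $\log\lambda$ and is only a $\Gamma(2)$-object rather than a form for the full modular group. The resonance of the indicial roots $0$ and $1$, which differ by an integer, is precisely what forces a logarithmic solution, so the summand $\Delta^{n+1}\omega_kX_n^{(k)}\log\lambda$ in \eqref{eq:phi-def} is not incidental but is the third indicial branch. To make the direct verification succeed I would use the transformation law of $\log\lambda$, together with the fact that its derivative is a weight-$2$ form for $\Gamma(2)$, so that the Serre-derivative combination in \eqref{eq:ODE2-Serre} returns holomorphic modular data and the coefficients of \eqref{eq:ODE2} act consistently on the logarithmic part. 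Establishing this compatibility, rather than the bookkeeping of the recurrence, is where the real work lies.
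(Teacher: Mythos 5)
Your proposal is correct and follows essentially the same route as the paper: the published proof likewise reduces everything to showing that $\phi_w$ solves \eqref{eq:ODE2}\textsubscript{$w+2$}, via the collapsed differential relation obtained by inserting the bracket identity of Lemma~\ref{lem-ode2} (with shifted parameters) into the recurrence, exactly as you describe. One discrepancy worth flagging: the paper's displayed relation \eqref{eq:phi-diff-recurr-2} carries the numerator coefficient $(w-6)(w-9)$, whereas your $(w-6)(w-7)$ is the exact $w\mapsto w+2$ shift of \eqref{eq:f-diff-recurr-2}; since \eqref{eq:phi2mod4-recurr} is itself the exact $w\mapsto w+2$ shift of \eqref{eq:f2mod4-recurr}, and the analogous pair \eqref{eq:f-diff-recurr}/\eqref{eq:phi-diff-recurr-0} in the $w\equiv0\pmod 4$ case does shift exactly, your coefficient is the one forced by the structure and the paper's appears to be a typo.
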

\begin{proof}
  The proof is again similar to the proof of Proposition~\ref{prop-equiv-2}; it
  only has to be shown that $\phi_w$ satisfies
  \eqref{eq:ODE2}\textsubscript{$w+2$}. This is done by showing the relation
  \begin{equation}
    \label{eq:phi-diff-recurr-2}
    \phi_{w+4}=\frac{(w-6)(w-9)E_4\phi_w-36\partial_{w+2}\partial_w\phi_w}
    {120(w-1)(w-2)(w-3)(w-14)}.
  \end{equation}
\end{proof}
\section{Positivity of the coefficients}\label{sec:posit-coeff}

In this section we will show that the forms $f_w$ obtained in
Propositions~\ref{prop-equiv-0} and~\ref{prop-equiv-2} can be written in the
form
\begin{equation}
  \label{eq:fw-decomp}
  f_w=(-1)^{w/2}\frac{144\mu_w}{(w-3)(w-4)}E_{w-4}''+\Delta \tilde{f}_{w-12},
\end{equation}
where $\tilde{f}_{w-12}$ is a quasimodular form of weight $w-12$ and depth
$2$. The factor $\mu_w$ is given by
\begin{equation}
  \label{eq:mu-def}
  \mu_w=
  \begin{cases}
    \frac{3(w/4-2)!}{80^{w/4-2}(w-7)!!(w/2-1)!!}&\text{for }w\equiv0\pmod4\\
   \frac{3((w-10)/4)!}{80^{(w-10)/4}(w-7)!!(w/2-4)!!}&\text{for }w\equiv2\pmod4,
  \end{cases}
\end{equation}
where $m!!=m(m-2)(m-4)\ldots$ denotes the double factorial.
From this we derive the following proposition.
\begin{proposition}\label{prop-pos}
  Let $f_w$ be the quasimodular forms given by Propositions~\ref{prop-equiv-0}
  and~\ref{prop-equiv-2}. Then all Fourier coefficients of $f_w$ are positive
  with possibly finitely many exceptions.
\end{proposition}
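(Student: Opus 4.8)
The plan is to exploit the decomposition \eqref{eq:fw-decomp}, which expresses $f_w$ as a main term proportional to $E_{w-4}''$ plus a correction $\Delta\tilde f_{w-12}$ of lower weight. The strategy is a two-part analysis: first establish that the Fourier coefficients of the main term $E_{w-4}''$ are eventually positive with a controlled (in fact exponential) rate of growth, and second show that the correction term $\Delta\tilde f_{w-12}$, while possibly contributing negative coefficients, grows strictly more slowly so that it cannot overturn the sign of the main term except for finitely many indices. The factor $\mu_w$ from \eqref{eq:mu-def} together with the sign $(-1)^{w/2}$ ensures that the leading coefficient of the main term is positive, which is the starting point for the whole argument.

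First I would analyse the coefficients of $E_{w-4}''$. Writing $E_{w-4}=1-\frac{2(w-4)}{B_{w-4}}\sum_{m\geq1}\sigma_{w-5}(m)q^m$, the second derivative in the normalisation $f'=q\,df/dq$ multiplies the $m$-th coefficient by $m^2$, so the $m$-th Fourier coefficient of $(-1)^{w/2}\mu_w\cdot\frac{144}{(w-3)(w-4)}E_{w-4}''$ is, up to the positive factor, proportional to $m^2\sigma_{w-5}(m)$ times $-(w-4)/B_{w-4}$. Because $(-1)^{w/2}$ and the sign of $B_{w-4}$ conspire correctly (the Bernoulli numbers $B_{w-4}$ alternate in sign with the parity controlled by $(w-4)/2$), this quantity is positive for all $m\geq1$. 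Thus the main term contributes coefficients that grow like $m^{w-3}$ and are positive for every $m\geq1$; only the constant coefficient vanishes.

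Next I would bound the correction $\Delta\tilde f_{w-12}$. Since $\tilde f_{w-12}$ is a quasimodular form of weight $w-12$ and depth $2$, it lies in a finite-dimensional space, and its Fourier coefficients grow at most like $m^{w-13}$ (polynomially in $m$ of degree one less than the weight, by standard estimates for the coefficients of $E_2$, $E_4$, $E_6$ and their products, since the relevant forms here are holomorphic at $i\infty$ by the order conditions \eqref{eq:fw-ord}--\eqref{eq:hw-ord}). Multiplying by $\Delta=q\prod(1-q^n)^{24}$ keeps this growth rate. Therefore the $m$-th coefficient of the correction is $O(m^{w-13})$, which is negligible compared with the $m^{w-3}$ growth of the main term. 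Consequently there is a threshold $M_w$ such that for all $m\geq M_w$ the coefficient of $f_w$ is dominated by the positive main term and hence positive; only finitely many indices $m<M_w$ can fail.

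The main obstacle will be establishing \eqref{eq:fw-decomp} itself together with the explicit formula \eqref{eq:mu-def} for $\mu_w$, and in particular verifying the sign $(-1)^{w/2}$ and that $\mu_w>0$. The cleanest route is induction using the recurrences \eqref{eq:f0mod4-recurr} and \eqref{eq:f2mod4-recurr}: one writes each $f_w$, $f_{w-4}$, $f_{w-8}$ in the form \eqref{eq:fw-decomp}, substitutes into the recurrence, and collects the coefficient of $E_{w-4}''$ by comparing leading Fourier coefficients, thereby deriving a recurrence for $\mu_w$ whose solution is the ratio of factorials and double factorials in \eqref{eq:mu-def}. Checking this against the explicit initial forms \eqref{eq:f-init} and \eqref{eq:f-init-2} anchors the induction. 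The delicate bookkeeping is the tracking of the double-factorial denominators and the sign as $w$ increases by $4$, which must be done separately for the two residue classes $w\equiv0$ and $w\equiv2\pmod4$; once the sign and positivity of $\mu_w$ are secured, the dominance argument above completes the proof.
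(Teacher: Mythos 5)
Your overall strategy coincides with the paper's: both rest on the decomposition \eqref{eq:fw-decomp} and a growth comparison between the Eisenstein-derivative main term (coefficients of exact order $n^{w-3}$, positive because the sign of $\mathcal{B}_{w-4}$ cancels against $(-1)^{w/2}$) and the correction. However, there are two concrete problems. First, your bound on the correction term is wrong as stated: multiplying a $q$-series with coefficients $O(m^{w-13})$ by $\Delta$ does \emph{not} preserve that growth rate. For example, with $w=16$ and $\tilde f_{w-12}=E_4$, the product $\Delta E_4$ is the weight-$16$ cusp form, whose coefficients have order $n^{15/2}$, not $O(n^{3})$. The correct route -- and the one the paper takes -- is to observe that the correction, written in the form $\widetilde{\alpha}_{w-4}''+\beta_{w-2}'+\gamma_w$ with \emph{cusp} forms, has coefficients $O\bigl(n^{(w-1)/2}\sigma_0(n)\bigr)$ by Deligne's estimate; since $(w-1)/2<w-3$, dominance follows. (Even the crude convolution bound $O(n^{w-13/2+\varepsilon})$ would suffice here, but the bound you assert, and its justification, are both false.)

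Second, and more seriously, the step you defer -- establishing \eqref{eq:fw-decomp} -- is the actual content of the proof, and your sketch of it ("collect the coefficient of $E_{w-4}''$ by comparing leading Fourier coefficients") does not explain why the remainder $f_w-(-1)^{w/2}\tfrac{144\mu_w}{(w-3)(w-4)}E_{w-4}''$ is divisible by $\Delta$ as a quasimodular form, i.e.\ why no other Eisenstein-type contribution of order $n^{w-3}$ survives in the remainder. The paper's mechanism is to write $f_w=A_w+E_2B_{w-2}+E_2^2C_{w-4}$, derive from the reduced recurrence \eqref{eq:f-diff-recurr} together with \eqref{eq:serre-quasi2} a $3\times3$ matrix recursion for the constant coefficients $(a_w,b_{w-2},c_{w-4})$ of the three depth components, and show these are proportional to $(1,-2,1)$ with proportionality factor $\mu_w$. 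It is precisely this $(1,-2,1)$ pattern which, after converting to the representation $f_w=\alpha_{w-4}''+\beta_{w-2}'+\gamma_w$ via \eqref{eq:fw-deriv-ABC}, forces $\beta_{w-2}$ and $\gamma_w$ to be cusp forms and isolates the entire Eisenstein contribution in the single term $E_{w-4}''$. Matching a single leading Fourier coefficient, as you propose, fixes $\mu_w$ but does not rule out lower-order Eisenstein contamination; without the $(1,-2,1)$ computation (or an equivalent), the decomposition, and hence the whole dominance argument, is not established.
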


\begin{proof}
  We set
  \begin{equation}\label{eq:f_w-ABC}
    f_w=A_w+E_2B_{w-2}+E_2^2C_{w-4}.
  \end{equation}
  Applying \eqref{eq:serre-quasi} twice we obtain
  \begin{equation}
    \label{eq:serre-quasi2}
    \begin{split}
      &\partial_w\partial_{w-2}\left(A_w+E_2B_{w-2}+E_2^2C_{w-4}\right)=\\
      &\frac1{72}\left(72\partial_{w+2}\partial_wA_w-E_4A_w-
        12E_4\partial_{w-2}B_{w-2}
      +2E_6B_{w-2}+E_4^2C_{w-4}\right)\\+
      &\frac{E_2}{36}\left(12\partial_wA_w+\partial_w\partial_{w-2}B_{w-2}
        -E_4B_{w-2}-12E_4\partial_{w-4}C_{w-4}+
        2E_6C_{w-4}\right)\\+
      &\frac{E_2^2}{72}\left(A_w+12\partial_{w-2}B_{w-2}+
        72\partial_{w-2}\partial_{w-4}C_{w-4} -E_4C_{w-4}\right).
    \end{split}
  \end{equation}
Setting $a_w$, $b_{w-2}$, and $c_{w-4}$ for the constant coefficients of $A_w$,
$B_{w-2}$, and $C_{w-4}$ and using \eqref{eq:f-diff-recurr} we
obtain \tiny
\begin{equation*}
  \left(\begin{array}{l}
    a_{w+4}\\b_{w+2}\\c_w
  \end{array}\right)
  =\frac{\left(
\begin{array}{ccc}
 3 w^2-46 w+122 & -2 w & -2 \\
 4 w & 3 w^2-42 w+124 & 8-4 w \\
 -2 & 2 (w-2) & 3 w^2-38 w+114 \\
\end{array}
\right)
 \left(\begin{array}{l}
    a_{w}\\b_{w-2}\\c_{w-4}
  \end{array}\right)}{480 (w - 10) (w - 5) (w - 3) (w + 2)},
\end{equation*}
\normalsize
from which we obtain
\begin{equation*}
  \left(\begin{array}{l}
    a_{w}\\b_{w-2}\\c_{w-4}
  \end{array}\right)=\mu_w
\left(\begin{array}{r}
  1\\-2\\1
\end{array}\right)
\end{equation*}
for $w\equiv0\pmod4$; using \eqref{eq:f-diff-recurr-2} for $w\equiv2\pmod4$
gives a similar recursion. This shows that all
three forms $A_w$, $B_{w-2}$, and $C_{w-4}$ have non-vanishing Eisenstein
parts.

The form $f_w$ can be rewritten as
\begin{equation}\label{eq:fw-deriv}
  f_w=\alpha_{w-4}''+\beta_{w-2}'+\gamma_w
\end{equation}
with forms $\alpha_{w-4}$, $\beta_{w-2}$, and $\gamma_w$ of respective weights
$w-4$, $w-2$, and $w$. The two representations \eqref{eq:f_w-ABC} and
\eqref{eq:fw-deriv} are related by
\begin{equation}
  \label{eq:fw-deriv-ABC}
  \begin{split}
    \alpha_{w-4}&=\frac{144}{(w-3)(w-4)}C_{w-4}\\
    \beta_{w-2}&=\frac{12}{w-2}\left(B_{w-2}-
      \frac{24}{w-4}\partial_{w-4}C_{w-4}\right)\\
    \gamma_w&=A_w-\frac{12}{w-2}\partial_{w-2}B_{w-2}+\frac1{w-3}E_4C_{w-4}\\
    &+\frac{144}{(w-2)(w-3)}\partial_{w-2}\partial_{w-4}C_{w-4}.
  \end{split}
\end{equation}
Inserting our previous result that the constant coefficients of
$A_w$,$B_{w-2}$, and $C_{w-4}$ are proportional to $(1,-2,1)$ yields that
$\beta_{w-2}$ and $\gamma_w$ are cusp forms, thus multiples of $\Delta$. This
yields the decomposition \eqref{eq:fw-decomp} and more precisely
\begin{equation}\label{eq:f_w-eisenstein}
  f_w=(-1)^{w/2}\frac{144\mu_w}{(w-3)(w-4)}E_{w-4}''+\widetilde{\alpha}_{w-4}''
  +\beta_{w-2}'+\gamma_w
\end{equation}
with cusp forms $\widetilde{\alpha}_{w-4}$, $\beta_{w-2}$, and $\gamma_w$.

The Fourier coefficients of $E_{w-4}''$ are
$-\frac{2(w-4)}{\mathcal{B}_{w-4}}n^2\sigma_{w-5}(n)$ and thus of order
$n^{w-3}$ (here $\mathcal{B}_{w-4}$ denotes the Bernoulli numbers). Using
Deligne's estimate (see~\cite{Deligne1974:la_conjecture_weil}) for the
coefficients of cusp forms to estimate the coefficients of
$\widetilde{\alpha}_{w-4}''+\beta_{w-2}'+\gamma_w$ gives an estimate of order
$n^{\frac{w-1}2}\sigma_0(n)$ ($\sigma_0(n)$ being the number of positive
divisors of $n$) for the coefficients of $\Delta\tilde{f}_{w-12}$ in
\eqref{eq:fw-decomp}. Thus the coefficient of the second term in
\eqref{eq:fw-decomp} is of smaller order than the coefficient of the first
term, which then determines the sign of all but possibly finitely many Fourier
coefficients.
\end{proof}
We explain shortly, how to prove positivity of all coefficients for a fixed
value of $w$: we start from \eqref{eq:f_w-eisenstein}. Then the
coefficients of the cusp forms can be estimated by an explicit bound obtained
in \cite{Jenkins_Rouse2011:bounds_coefficients_cusp}.
\begin{theorem}[Theorem~1 in \cite{Jenkins_Rouse2011:bounds_coefficients_cusp}]
  Let
  \begin{equation*}
    G(z)=\sum_{n=1}^\infty g(n)q^n
  \end{equation*}
  be a cusp form of weight $w$. Then
  \begin{equation}
    \label{eq:jenkins-rouse}
    \begin{split}
      |g(n)|&\leq\sqrt{\log w}\Biggl(
        11\sqrt{\sum_{m=1}^\ell\frac{|g(m)|^2}{m^{w-1}}}\\
      &+
        \frac{e^{18.72}(41.41)^{w/2}}{w^{(w-1)/2}} \left|\sum_{m=1}^\ell
          g(m)e^{-7.288m}\right|\Biggr)n^{\frac{w-1}2}\sigma_0(n),
    \end{split}
\end{equation}
where $\ell$ is the dimension of the space of cusp forms of weight $w$.
\end{theorem}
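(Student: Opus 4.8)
The exponent $\tfrac{w-1}2$ together with the divisor factor $\sigma_0(n)$ is precisely the shape of Deligne's bound for the Hecke-normalised Fourier coefficients of a single eigenform, which is available to us through the Weil conjectures as in \cite{Deligne1974:la_conjecture_weil}. The plan is therefore to reduce the estimate for an arbitrary cusp form $G$ to this known bound for eigenforms, to peel off the factor $\sigma_0(n)n^{(w-1)/2}$ entirely, and to express the remaining $n$-independent constant through the $\ell$ initial coefficients $g(1),\dots,g(\ell)$, which already determine $G$ since $\ell=\dim$. The two summands on the right of \eqref{eq:jenkins-rouse} should then emerge as two complementary estimates for this constant.

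First I would fix the Miller basis $g_1,\dots,g_\ell$ of the space of cusp forms of weight $w$, normalised in echelon form so that $g_j=q^j+\mathcal{O}(q^{\ell+1})$. Matching the first $\ell$ Fourier coefficients gives the exact identity $G=\sum_{j=1}^\ell g(j)\,g_j$, so that the combination constants are literally the computable head coefficients, with no matrix inversion required. Writing $b_j(n)$ for the $n$-th coefficient of $g_j$ and decomposing each $g_j=\sum_i c_{j,i}f_i$ into normalised Hecke eigenforms $f_i$, Deligne's bound $|a_{f_i}(n)|\le\sigma_0(n)\,n^{(w-1)/2}$ yields $|b_j(n)|\le C_j\,\sigma_0(n)\,n^{(w-1)/2}$ with $C_j:=\sum_i|c_{j,i}|$ an $n$-independent quantity. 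The triangle inequality then collapses everything to $|g(n)|\le\bigl(\sum_{j=1}^\ell|g(j)|\,C_j\bigr)\sigma_0(n)\,n^{(w-1)/2}$.

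For the first summand I would estimate $\sum_j|g(j)|C_j$ by Cauchy--Schwarz against the weight $j^{w-1}$, giving the factorisation $\sum_j|g(j)|C_j\le\bigl(\sum_{j\le\ell}|g(j)|^2/j^{w-1}\bigr)^{1/2}\bigl(\sum_{j\le\ell}j^{w-1}C_j^2\bigr)^{1/2}$. The first factor is exactly the head norm appearing in \eqref{eq:jenkins-rouse}, while the second is an $n$-independent number depending only on the weight, which should be bounded by the constant $11$ after estimating the eigenform combination constants $C_j$ of the fixed basis elements; the stray factor $\sqrt{\log w}$ enters here through the effective lower bounds for the Petersson norms $\|f_i\|^2\asymp L(1,\mathrm{Sym}^2 f_i)$ that are needed to control $c_{j,i}=\langle g_j,f_i\rangle/\|f_i\|^2$. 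The second summand is a complementary bound for $\sum_j|g(j)|C_j$ that is needed when the $C_j$ are not uniformly small: here one controls $G$ through its value at the fixed height $y_0$ with $e^{-2\pi y_0}=e^{-7.288}$ (so $y_0\approx1.16$), at which the tail of the $q$-expansion is negligible and $|G(iy_0)|$ is captured by the exponential sum $|\sum_{j\le\ell}g(j)e^{-7.288j}|$, and then converts this point value back to the coefficient scale by the classical optimised integral representation $g(n)=e^{2\pi ny}\int_0^1G(x+iy)e^{-2\pi inx}\,dx$ with $y\sim\tfrac{w-1}{4\pi n}$, which is exactly what produces the factor $w^{-(w-1)/2}$ and the geometric constant $(41.41)^{w/2}$.

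The only non-elementary ingredient is Deligne's bound itself, which we take as given; everything else is linear algebra and classical complex analysis on $\mathbb{H}$. Accordingly, the main obstacle is entirely quantitative: one must make every constant explicit and, crucially, uniform in the weight $w$. This requires an effective lower bound for $L(1,\mathrm{Sym}^2 f_i)$ with no dependence on unproven zero-free regions, an explicit upper bound for the combination constants $C_j$ of the Miller basis elements (hence for $\sum_{j\le\ell}j^{w-1}C_j^2$), and a careful tail estimate showing that at the height $y_0$ the displayed head sum really does dominate the full $q$-expansion. These explicit, weight-uniform estimates, rather than the underlying analytic structure, are where the real work lies.
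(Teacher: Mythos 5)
This statement is not proved in the paper at all: it is quoted verbatim as Theorem~1 of \cite{Jenkins_Rouse2011:bounds_coefficients_cusp} and used as an external black box in Section~\ref{sec:posit-coeff}, so there is no in-paper proof to measure your attempt against. Judged against the actual argument of Jenkins and Rouse, your outline points in the right direction: decompose into Hecke eigenforms, apply Deligne's bound $|a_f(n)|\le\sigma_0(n)n^{(w-1)/2}$, control the combination coefficients $\langle G,f_i\rangle/\|f_i\|^2$ through effective lower bounds for $\|f_i\|^2$ via $L(1,\mathrm{Sym}^2 f_i)$ (the source of $\sqrt{\log w}$), and obtain the two displayed terms from two complementary estimates, one through the head coefficients and one through a point evaluation at the height $y_0$ with $e^{-2\pi y_0}=e^{-7.288}$. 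Your insertion of the echelon (Miller) basis as an intermediate step is a variant of this (Jenkins and Rouse expand $G$ directly in eigenforms, which avoids the extra constants $C_j$), and your identity $G=\sum_{j\le\ell}g(j)g_j$ is correct.

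The gap is that what you have written is a strategy, not a proof, and the part you defer is the entire content of the theorem. The qualitative bound $|g(n)|\ll_G \sigma_0(n)n^{(w-1)/2}$ is immediate from Deligne's estimate plus finite-dimensionality; what makes the statement a theorem is the explicit, weight-uniform constants $11$, $e^{18.72}$, $41.41$ and $7.288$, none of which you derive. Concretely: (i) you give no effective, unconditional lower bound for $L(1,\mathrm{Sym}^2 f)$, and without one the factor $11\sqrt{\log w}$ cannot be produced; (ii) the bound you need on $\sum_{j\le\ell}j^{w-1}C_j^2$ for the Miller basis is asserted rather than proved, and it is in fact the hardest point of your variant, since echelon basis elements can have large eigenform coordinates; (iii) the claim that the tail of the $q$-expansion at $y_0\approx1.16$ is ``negligible'' is circular as stated --- controlling the tail requires a bound on the very coefficients $g(n)$ with $n>\ell$ that you are trying to estimate, so it must be bootstrapped from a cruder preliminary bound or from the basis identity together with explicit bounds on the $g_j(iy_0)$. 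Until these three items are supplied with numerical constants, the inequality \eqref{eq:jenkins-rouse} is not established.
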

Using this bound we obtain that the coefficients of
$\widetilde{\alpha}_{w-4}''$, $\beta_{w-2}'$, and $\gamma_w$ are bounded by
\begin{equation*}
  C_in^{\frac{w-1}2}\sigma_0(n)\quad\text{for }i=\alpha,\beta,\gamma
\end{equation*}
for explicit constants $C_\alpha$, $C_\beta$, and $C_\gamma$. Then the
coefficients of $f_w$ are positive for all $n$ satisfying
\begin{equation}
  \label{eq:fw-fourier-bound}
  \frac{144\mu_w}{(w-3)(w-4)}\frac{2(w-4)}{|\mathcal{B}_{w-4}|}
  n^2\sigma_{w-5}(n)>(C_\alpha+C_\beta+C_\gamma)n^{\frac{w-1}2}\sigma_0(n).
\end{equation}
Estimating $\sigma_{w-5}(n)\geq n^{w-5}$ and $\sigma_0(n)<2\sqrt n$, this can
be solved explicitly for the minimal value of $n$; the remaining finitely many
values of $n$ can be checked with the help of a computer. The number of
coefficients to be checked was up to $3300$ in the cases we studied. For the
values $w\leq22$ the cusp forms are either trivial or Hecke eigenforms, because
the space of cusp forms has dimension $\leq1$. In this case Deligne's estimate
can be used directly, and the number of coefficients to be checked is less than
$10$.

\begin{remark}\label{rem1}
  We have checked the positivity of the Fourier coefficients of the forms $f_w$
  for even $w$ in the range $8,\ldots,94$, which corresponds to dimensions
  $d=4,\ldots,312$ ($d$ divisible by $4$). Notice that the weight depends on
  dimension by the relation
  \begin{equation*}
    w=12\left\lfloor\frac{d+4}{16}\right\rfloor-\frac d2+16.
  \end{equation*}
  As pointed out in Theorem~\ref{thm-even} this implies that
  $F_+(\mathbf{x})\leq0$ for $\|\mathbf{x}\|^2\geq 2n+2\ell-2$ for these
  dimensions.
\end{remark}

The numerical experiments support the following conjecture.
\begin{conjecture}\label{conj1}
  Let $f_w$ ($w\equiv0\pmod2$) be the quasimodular form given by
  Propositions~\ref{prop-equiv-0} and~\ref{prop-equiv-2}. Then
  \begin{equation*}
    f_w=\sum_{m=\lfloor\frac w4\rfloor-1}^\infty a_w(m)q^m
  \end{equation*}
  with $a_w(m)>0$ for $m\geq \lfloor\frac w4\rfloor-1$.
\end{conjecture}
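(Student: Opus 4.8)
The plan is to upgrade Proposition~\ref{prop-pos}, which settles all but finitely many coefficients for each fixed $w$, into a bound that is both effective and uniform in $w$. Everything rests on the exact decomposition \eqref{eq:fw-decomp}, which I rewrite via \eqref{eq:f_w-eisenstein} as
\[
f_w=(-1)^{w/2}\frac{144\mu_w}{(w-3)(w-4)}E_{w-4}''+\widetilde{\alpha}_{w-4}''+\beta_{w-2}'+\gamma_w,
\]
with cusp forms $\widetilde{\alpha}_{w-4},\beta_{w-2},\gamma_w$. As in the proof of Proposition~\ref{prop-pos}, the $m$-th coefficient of the Eisenstein term equals $\frac{144\mu_w}{(w-3)(w-4)}\cdot\frac{2(w-4)}{|\mathcal{B}_{w-4}|}\,m^2\sigma_{w-5}(m)$, which since $\mu_w>0$ is strictly positive for every $m\geq1$ and is at least a positive $w$-dependent constant times $m^{w-3}$ (using $\sigma_{w-5}(m)\geq m^{w-5}$). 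Thus the conjecture reduces to showing that this positive main term exceeds, in absolute value, the $m$-th coefficient of the correction $\widetilde{\alpha}_{w-4}''+\beta_{w-2}'+\gamma_w$ for every $m\geq\lfloor w/4\rfloor-1$.

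I would split the range at a threshold $m_0(w)$. For $m\geq m_0(w)$ the domination is furnished by Deligne's estimate and the Jenkins--Rouse inequality \eqref{eq:jenkins-rouse}, exactly as in the discussion following Proposition~\ref{prop-pos}; the task is to pin down $m_0(w)$. Feeding the explicit value of $\mu_w$ from \eqref{eq:mu-def} together with the constants $C_\alpha,C_\beta,C_\gamma$ into \eqref{eq:fw-fourier-bound} turns the required inequality into $m^{(w-5)/2}\gg_w\sigma_0(m)$ with an implied constant that I would make fully explicit, and then solve for $m_0(w)$ using $\sigma_0(m)<2\sqrt m$. The constants $C_\alpha,C_\beta,C_\gamma$ are governed, through \eqref{eq:jenkins-rouse}, by the low-order Fourier coefficients $g(m)$ ($m\leq\ell$, $\ell\asymp w$) of the three cusp forms, and these can in principle be tracked inductively in $w$ from the initial data \eqref{eq:f-init}, \eqref{eq:f-init-2} and the recurrences \eqref{eq:f0mod4-recurr}, \eqref{eq:f2mod4-recurr}.

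The genuine obstacle is the \emph{head} $\lfloor w/4\rfloor-1\leq m<m_0(w)$. Remark~\ref{rem1} shows that this head is not bounded as $w$ grows --- up to $3300$ coefficients were checked by machine --- so no purely asymptotic estimate can dispose of it, and the tail estimate is itself only as good as the uniform control one has over exactly these low-order coefficients. A naive positivity-preserving induction on $w$ via \eqref{eq:f0mod4-recurr}, \eqref{eq:f2mod4-recurr} also fails: the middle coefficient $-\tfrac58(w-8)(w-12)$ is negative for $w>12$, and $\Delta$ itself has sign-changing coefficients, so positivity is not propagated. What appears to be needed is a representation of $f_w$ that is manifestly nonnegative in this intermediate regime --- for instance, a proof that the renormalised forms $\mu_w^{-1}f_w$ converge coefficientwise to a limit $q$-series with strictly positive coefficients, accompanied by a uniform error bound, or an expression of $f_w$ through the third-order operators underlying \eqref{eq:ODE1}, \eqref{eq:ODE2} as a combination that is visibly positive. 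I expect this intermediate-range positivity to be the crux and the reason the statement is posed as a conjecture; by contrast the Eisenstein main term, the dimension bookkeeping, and the large-$m$ tail are routine once the constants are made explicit.
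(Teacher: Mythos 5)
The statement you are asked to prove is stated in the paper as a \emph{conjecture}, and the paper does not prove it; your proposal, as you yourself concede in the final paragraph, does not prove it either. What you do carry out coincides with the paper's own partial progress: the decomposition \eqref{eq:f_w-eisenstein} with a positive Eisenstein main term of size $\asymp_w m^{w-3}$, the Deligne/Jenkins--Rouse domination of the cusp-form contribution by $O_w\bigl(m^{\frac{w-1}{2}}\sigma_0(m)\bigr)$, and the resulting explicit threshold from \eqref{eq:fw-fourier-bound} --- this is exactly Proposition~\ref{prop-pos} and the discussion following it, which yields positivity of all but finitely many coefficients for each fixed $w$, supplemented by the finite machine verification of Remark~\ref{rem1} for $w\le 94$. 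None of this reaches the conjecture, because the number of ``head'' coefficients $\lfloor w/4\rfloor-1\le m<m_0(w)$ left uncontrolled grows with $w$.

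Your diagnosis of the obstruction is accurate and worth recording: the threshold $m_0(w)$ coming from \eqref{eq:fw-fourier-bound} is not bounded in $w$, the constants $C_\alpha,C_\beta,C_\gamma$ in \eqref{eq:jenkins-rouse} depend on precisely the low-order coefficients one is trying to control, and a sign-preserving induction through \eqref{eq:f0mod4-recurr} or \eqref{eq:f2mod4-recurr} is blocked by the negative middle coefficient $-\frac58(w-8)(w-12)$ and by the sign changes of the coefficients of $\Delta$. But the two remedies you sketch --- coefficientwise convergence of $\mu_w^{-1}f_w$ to a positive limit series with uniform error, or a manifestly positive expression through the operators in \eqref{eq:ODE1} and \eqref{eq:ODE2} --- are stated as desiderata, with no construction or estimate offered. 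So the crux of the statement remains untouched; what you have is a correct reconstruction of the paper's Proposition~\ref{prop-pos} together with a clear explanation of why the full statement is still open, not a proof of the conjecture.
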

This conjecture is similar to Conjecture~2 in
\cite{Kaneko_Koike2006:extremal_quasimodular_forms}, which states the
positivity of the Fourier coefficients of extremal quasimodular forms of
depth $\leq4$.

\begin{remark}
  The Fourier coefficients of the modular forms $\phi_w$ do not seem to be
  positive for small values of $w$. Numerical experiments indicate that the
  functions $\phi_w(it)$ are positive for $t>0$. Except for $w=8,10,14$
  (corresponding to dimensions $8$, $12$, and $24$) this
  seems to be difficult to prove due to the presence of the $\llambda$-term.
\end{remark}

\section{Examples: small dimensions}
\label{sec:concluding-remarks}
For some small dimensions the functions we constructed in
Sections~\ref{sec:determining-psi} and~\ref{sec:determining-psi-odd} are of
special interest. These of course include the dimensions $8$, $12$, and $24$
studied in \cite{Cohn-Kumar-Miller+2017:sphere_packing_24,
  Viazovska2017:sphere_packing_8,
  Cohn_Goncalves2019:optimal_uncertainty_principle}. In the following we
express all $\Gamma(2)$-modular functions in terms of $\theta$-functions by
replacing $\lambda$ using \eqref{eq:lambda-theta}.

\subsection*{Dimension $4$}
The function $\lambda$ maps the positive imaginary axis to the interval
$(0,1)$. Thus the function $\llambda$ produces a non-positive $+1$
eigenfunction for the Fourier transform. The function $\chi_1^{(0)}$ turns into
a $+1$ eigenfunction vanishing at $\mathbf{0}$ and having a last sign change at
distance $\sqrt2$. This shows $A_+(4)\leq\sqrt2$

The function
\begin{equation*}
  8400\frac{f_{14}}\Delta=\frac{E_4^2E_6-E_2(E_4^3+E_6^2)+E_2^2E_4E_6}\Delta
\end{equation*}
produces a Fourier eigenfunction for the eigenvalue $-1$, which does not vanish
at $\mathbf{0}$ and has a last sign change at distance $\sqrt2$.  This shows
$A_-(4)\leq\sqrt2$.
\subsection*{Dimension $8$}
Our results recover the functions used in \cite{Viazovska2017:sphere_packing_8}
to prove the optimal upper bound for sphere packings in dimension $8$: the
quasimodular form used there is
\begin{equation*}
  6000\frac{f_{12}}{\Delta}=\frac{E_6^2-2E_2E_4E_6+E_2^2E_4^2}{\Delta}.
\end{equation*}
The second modular form giving the $-1$ Fourier eigenfunction is
\begin{equation*}
  \frac{\phi_{10}}\Delta=\frac{\theta_{01}^{12}\left(5 \theta_{10}^8+
      5 \theta_{01}^4 \theta_{10}^4+
      2 \theta_{01}^8\right)}\Delta.
\end{equation*}

\subsection*{Dimension $12$}
Similarly, the function used in
\cite{Cohn_Goncalves2019:optimal_uncertainty_principle} to show an optimal
uncertainty principle in dimension $12$ is given by
\begin{equation*}
  \frac{\phi_8}\Delta=\frac{\theta_{01}^{12} \left(\theta_{01}^4+2
      \theta_{10}^4\right)}\Delta.
\end{equation*}

The function
\begin{multline*}
 C_{22}\frac{f_{22}}{\Delta^2}=\frac1{\Delta^2}\Bigl(205E_4^4E_6-637E_4E_6^3
 +E_2(70E_4^5+ 794E_4^2E_6^2)\\
 +E_2^2(275E_6^3-707E_4^3E_6)\Bigr)
\end{multline*}
($C_{22}$ being some large integer constant) provides a $-1$ eigenfunction,
which does not vanish at $\mathbf{0}$ and has a last sign change at
$2$. In this case Remark~\ref{rem:extra-even} applies and we can achieve a
more suitable function by forming a linear combination of $f_{22}$ and
$E_4f_{18}$ to obtain the function
\begin{multline*}
  \frac1{\Delta^2}\Bigl(415 E_4^4 E_6+161 E_4 E_6^3)
  -2 E_2 \left(431 E_4^2 E_6^2+145 E_4^5\right)\\
  +E_2^2 \left(451 E_4^3 E_6+125 E_6^3\right)\Bigr),
\end{multline*}
which transforms into an eigenfunction vanishing at $\mathbf{0}$ and having a
last sign change at distance $2$. This shows $A_-(12)\leq2$.
\subsection*{Dimension $16$}
This is the smallest dimension for which both Remarks~\ref{rem:extra-even}
and~\ref{rem:extra-odd} apply.

The function
\begin{multline*}
  C_{20}\frac{f_{20}}{\Delta^2}=
  \frac1{\Delta^2}\Bigl(-469 E_4^2 E_6^2+325 E_4^5+
  E_2 \left(358 E_4^3 E_6-70 E_6^3\right)\\
  +E_2^2 \left(395 E_4 E_6^2-539 E_4^4\right)\Bigr)
\end{multline*}
transforms into a Fourier eigenfunction for the eigenvalue $1$, which has a
last sign change at distance $2$. By Remark~\ref{rem:extra-even} we have an
extra degree of freedom by forming a linear combination of $f_{20}$ and
$E_4f_{16}$ to obtain the function
\begin{multline*}
  \frac1{\Delta^2}\Bigl(203 E_4^2 E_6^2+85 E_4^5-2 E_2 \left(211 E_4^3 E_6+77
    E_6^3\right)\\
  +  E_2^2 \left(239 E_4 E_6^2+49 E_4^4\right)\Bigr),
\end{multline*}
which transforms into a Fourier eigenfunction vanishing at $\mathbf{0}$ and
having a last sign change at distance $2$. This shows $A_+(16)\leq2$.

The functions
\begin{multline*}
  C_{18}\frac{\phi_{18}}{\Delta^2}=\frac{3072E_6}{\Delta}\llambda+
  \frac{\theta_{01}^{12}}{\Delta^2}\Bigl(-12  \theta_{10}^{24}
    -36 \theta_{01}^{4} \theta_{10}^{20}-  13 \theta_{01}^{8} \theta_{10}^{16}\\
    +34 \theta_{01}^{12} \theta_{10}^{12}+68 \theta_{01}^{16} \theta_{10}^8
    +45 \theta_{01}^{20} \theta_{10}^4+10 \theta_{01}^{24}\Bigr)
  \end{multline*}
  and
  \begin{equation*}
    C_{14}\frac{E_4\phi_{14}}{\Delta^2}=
    \frac{\theta_{01}^4
      \left(\theta_{01}^8+\theta_{01}^4 \theta_{10}^4+\theta_{10}^8\right)
      \left(7\theta_{10}^8+7 \theta_{01}^4 \theta_{10}^4+2
        \theta_{01}^8\right)}{\theta_{10}^{16} \theta_{00}^{16}}
  \end{equation*}
  both transform into Fourier eigenfunctions with last sign change at distance
  $2$. The transform of the second function vanishes at $\mathbf{0}$. This
  shows $A_-(16)\leq2$.
\subsection*{Dimension $20$}
The function
\begin{equation*}
  C_{18}\frac{f_{18}}{\Delta^2}=
  \frac1{\Delta^2}\Bigl(5 E_4^3 E_6+7 E_6^3
  +E_2 \left(-19 E_4 E_6^2-5 E_4^4\right)+
  12 E_2^2 E_4^2 E_6\Bigr)
\end{equation*}
transforms into a Fourier eigenfunction for eigenvalue $-1$, which has its last
sign change at distance $2$. This eigenfunction does not vanish at
$\mathbf{0}$. This shows $A_-(20)\leq2$.

The functions
\begin{multline*}
  C_{16}\frac{\phi_{16}}{\Delta^2}=3\frac{E_4}\Delta\llambda\\
  +  \frac{32 \left(5 \theta_{01}^{20}+20 \theta_{01}^{16} \theta_{10}^{4}
      +42 \theta_{01}^{12} \theta_{10}^8+68 \theta_{01}^8 \theta_{10}^{12}
      +60 \theta_{01}^4 \theta_{10}^{16}+24
   \theta_{10}^{20}\right)}{\theta_{01}^4 \theta_{10}^{16} \theta_{00}^{16}}
\end{multline*}
and
\begin{equation*}
  C_{12}\frac{E_4\phi_{12}}{\Delta^2}=\frac{E_4}\Delta\llambda+
  \frac{128 (\theta_{01}^4+2 \theta_{10}^4) \left(\theta_{01}^8
      +\theta_{01}^4 \theta_{10}^4+\theta_{10}^8\right)^2}
  {\theta_{01}^4 \theta_{10}^{16} \theta_{00}^{16}}
\end{equation*}
both transform into Fourier eigenfunctions with a last sign change at distance
$2$. Forming the linear combination
\begin{equation*}
  \frac{\theta_{01}^4 \left(\theta_{01}^{12}+4 \theta_{01}^8 \theta_{10}^4
      +6 \theta_{01}^4 \theta_{10}^8+4 \theta_{10}^{12}\right)}
  {\theta_{10}^{16}\theta_{00}^{16}}
\end{equation*}
gives a function, which transforms into a Fourier eigenfunction vanishing at
$\mathbf{0}$ and having sign changes exactly at distances $\sqrt{2}$ and
$2$. This shows $A_+(20)\leq2$.
\subsection*{Dimension $24$}
The functions used in \cite{Cohn-Kumar-Miller+2017:sphere_packing_24} to obtain
the optimal upper bound for sphere packings in dimension $24$ were
\begin{equation*}
  2540160000\frac{f_{16}}{\Delta^2}=
  \frac{-25E_4^4 \!+ 49 E_4E_6^2 \!- 48 E_2 E_4^2 E_6
      \!+ E_2^2 (49 E_4^3 \!- 25 E_6^2)}{\Delta^2}
\end{equation*}
and
\begin{equation*}
  13440\frac{\phi_{14}}{\Delta^2}=\frac{\theta_{01}^{20}\left(
      7 \theta_{10}^8+ 7 \theta_{01}^4 \theta_{10}^4+
    2 \theta_{01}^8\right)}{\Delta^2}.
\end{equation*}

\subsection*{Dimension $48$}
The function
\begin{equation*}
  \begin{split}
    C_{28}\frac{f_{28}}{\Delta^4}&=\frac1{\Delta^4} \Bigl(-9565298 E_4^4
    E_6^2+1101373 E_4 E_6^4+7254325 E_4^7\\
    &+ 48 E_2 \left(133387 E_4^5 E_6-82987
      E_4^2 E_6^3\right)\\
    &+ E_2^2 \left(10650578 E_4^3 E_6^2-11603053
      E_4^6-257125 E_6^4\right)\Bigr)
  \end{split}
\end{equation*}
  ($C_{28}$ being some large integer constant) transforms into a Fourier
eigenfunction for the eigenvalue $1$, which has a last sign change at distance
$\sqrt8$. In this case
Remark~\ref{rem:extra-even} applies and we can achieve a more suitable function
by forming a linear combination of $f_{28}$ and $E_4f_{24}$ to obtain the
function
\begin{multline*}
  \frac1{\Delta^4}\Bigl(-21672478
  E_4^4 E_6^2+29822429 E_4 E_6^4+16373825 E_4^7\\
  +  24 E_2 \left(53329 E_4^5 E_6-2096977 E_4^2 E_6^3\right)\\
  +  E_2^2 \left(58616494 E_4^3 E_6^2-23223893 E_4^6-10868825 E_6^4\right)
  \Bigr),
\end{multline*}
which transforms into a Fourier eigenfunction vanishing at $\mathbf{0}$ with
last sign change at $\sqrt8$. This shows $A_+(48)\leq\sqrt8$

The functions
\begin{multline*}
  C_{26}\frac{\phi_{26}}{\Delta^3}=267\frac{E_4^2E_6}{\Delta^2}\llambda+
  \frac{2048}{\theta_{01}^{12} \theta_{10}^{24}\theta_{00}^{24}}\\
 \times \Bigl(570 \theta_{01}^{40}+3705 \theta_{01}^{36} \theta_{10}^4
      +14204 \theta_{01}^{32} \theta_{10}^8
      +41639 \theta_{01}^{28} \theta_{10}^{12}\\
      +71710 \theta_{01}^{24} \theta_{10}^{16}
      +50531 \theta_{01}^{20} \theta_{10}^{20}
      -26351 \theta_{01}^{16}   \theta_{10}^{24}
      -88288 \theta_{01}^{12} \theta_{10}^{28}\\
      -86152 \theta_{01}^8 \theta_{10}^{32}
      -42720 \theta_{01}^4 \theta_{10}^{36}-8544 \theta_{10}^{40}\Bigr)
\end{multline*}
and
\begin{multline*}
  C_{22}\frac{E_4\phi_{22}}{\Delta^3}=3\frac{E_4^2E_6}{\Delta^2}\llambda+
  \frac{1024 \left(\theta_{01}^8+\theta_{01}^4
      \theta_{10}^4+\theta_{10}^8\right)}
  {\theta_{01}^{12} \theta_{10}^{24} \theta_{00}^{24}}\\
    \times\Bigl(30 \theta_{01}^{32}+165 \theta_{01}^{28} \theta_{10}^4
      +395 \theta_{01}^{24} \theta_{10}^8+636 \theta_{01}^{20} \theta_{10}^{12}
      +566 \theta_{01}^{16} \theta_{10}^{16}
      -240 \theta_{01}^{12} \theta_{10}^{20}\\
      -976 \theta_{01}^8 \theta_{10}^{24}
      -768 \theta_{01}^4 \theta_{10}^{28}
      -192 \theta_{10}^{32}\Bigr)  
\end{multline*}
transform into Fourier eigenfunctions with a last sign change at distance
$\sqrt6$. The linear combination
\begin{multline*}
  \frac{\theta_{01}^{16}} {\theta_{10}^{24} \theta_{00}^{24}}
  \Bigl(170 \theta_{01}^{24}
      +1105 \theta_{01}^{20} \theta_{10}^4
      +2678 \theta_{01}^{16} \theta_{10}^8\\
      +2574 \theta_{01}^{12} \theta_{10}^{12}
      -143 \theta_{01}^8 \theta_{10}^{16}-1716
      \theta_{01}^4 \theta_{10}^{20}-572 \theta_{10}^{24}\Bigr)
\end{multline*}
of these two functions transforms into a Fourier eigenfunction vanishing at
$\mathbf{0}$ with sign changes exactly at distances $\sqrt2$, $2$, and
$\sqrt6$. This shows $A_-(48)\leq\sqrt6$.

This should be compared to \cite{Rolen_Wagner2020:note_schwartz_functions}.
There it is erroneously stated that for dimension $48$ only a $+1$
eigenfunction with last sign change at distance $\sqrt6$ can be obtained by
this method. Our result shows that this method produces a $-1$ eigenfunction
with last sign change at $\sqrt6$, whereas the optimal $+1$ eigenfunction
obtained by this method has its last sign change at distance $\sqrt8$.

\appendix
\section{Some preliminaries on modular forms and
  functions}\label{sec:modular}

In this appendix we provide some basic facts about modular and quasimodular
forms, which are required as background for Sections~\ref{sec:even-eigen}
and~\ref{sec:odd}. For further reading and more details we refer to
\cite{Bruinier_Geer_Harder+2008:1_2_3-modular} and
\cite{Cohen_Stroemberg2017:modular_forms,
  Iwaniec1997:topics_classical_automorphic,
  Shimura2012:modular_forms_basics,
  Stein2007:modular_forms_computational,
  Diamond_Shurman2005:first_course_modular}. For introductions to quasimodular
forms we refer to \cite{Zagier2008:elliptic_modular_forms,
  Royer2012:quasimodular_forms_introduction,Choie_Lee2019:jacobi_like_forms}.

We recall the definition of the modular group
\begin{equation*}
  \Gamma=\mathrm{PSL}(2,\mathbb{Z})=\mathrm{SL}(2,\mathbb{Z})/\{\pm I\}
\end{equation*}
and the congruence subgroup
\begin{equation*}
  \Gamma(2)=\{\gamma\in\mathrm{SL}(2,\mathbb{Z})\mid
  \gamma\equiv I\pmod2\}/\{\pm I\},
\end{equation*}
where $I$ denotes the identity matrix.  We also recall that $\Gamma$ is
generated by $T:z\mapsto z+1$ and $S:z\mapsto-\frac1z$; $\Gamma(2)$ is
generated by $ST^2S=\frac{-z}{2z-1}$ and $T^2:z\mapsto z+2$.
\subsection{Modular forms for $\Gamma$, Eisenstein series}
\label{sec:modular-forms-gamma}
A holomorphic function $f:\mathbb{H}\to\mathbb{C}$ is called a \emph{weakly
  holomorphic modular form of weight $k$}, if it satisfies
\begin{equation}
  \label{eq:modular}
  (cz+d)^{-k}f\left(\frac{az+b}{cz+d}\right)=f(z)
\end{equation}
for all $z\in\mathbb{H}$ and all $\bigl( \begin{smallmatrix}a & b\\ c &
  d\end{smallmatrix}\bigr)\in\Gamma$. The space of weakly holomorphic modular
forms is denoted by $\mathcal{M}_k^!(\Gamma)$. This space is non-trivial only
for even values of $k$. A form $f$ is called holomorphic, if
\begin{equation*}
  f(i\infty):=\lim_{\Im z\to+\infty}f(z)
\end{equation*}
exists. The subspace $\mathcal{M}_k(\Gamma)$ of holomorphic modular forms is
non-trivial only for even $k\geq4$. Its dimension equals
\begin{equation*}
  \dim\mathcal{M}_k(\Gamma)=
  \begin{cases}
    \left\lfloor\frac k{12}\right\rfloor&\text{for }k\equiv 2\pmod{12}\\
    \left\lfloor\frac k{12}\right\rfloor+1&\text{otherwise.}
  \end{cases}
\end{equation*}
A holomorphic form $f$ is called a \emph{cusp form}, if $f(i\infty)=0$. The
space of cusp forms is denoted by $\mathcal{S}_k(\Gamma)$.

Prototypical examples of modular forms of weight $2k$ are given by the
Eisenstein series
\begin{equation}
  \label{eq:Eisenstein}
  E_{2k}(z)=\frac1{2\zeta(2k)}\sum_{(m,n)\in\mathbb{Z}\setminus\{(0,0)\}}
  \frac1{(mz+n)^{2k}}=1-\frac{4k}{\mathcal{B}_{2k}}\sum_{n=1}^\infty
  \sigma_{2k-1}(n)q^n
\end{equation}
for $k\geq2$ with
\begin{equation*}
  \sigma_{2k-1}(n)=\sum_{d\mid n}d^{2k-1}
\end{equation*}
and $\mathcal{B}_{2k}$ denoting the Bernoulli numbers. As usual in the context
of modular forms we use the notation $q=e^{2\pi iz}$; $q$ is called the
\emph{nome}.

The two forms
\begin{equation}\label{eq:E4-E6}
     E_4(z)=1+240\sum_{n=1}^\infty \sigma_3(n)q^n,\quad
    E_6(z)=1-504\sum_{n=1}^\infty \sigma_5(n)q^n,
 \end{equation}
of respective weights $4$ and $6$ are especially important, since they generate
the ring of all holomorphic modular forms
\begin{equation*}
  \bigoplus_{k=0}^\infty\mathcal{M}_{2k}(\Gamma)=\mathbb{C}[E_4,E_6].
\end{equation*}

The \emph{modular discriminant} is the prototype of a cusp form
\begin{equation}\label{eq:delta}
  \Delta(z)=\frac1{1728}(E_4(z)^3-E_6(z)^2)=
  q\prod_{n=1}^\infty\left(1-q^n\right)^{24}.
\end{equation}
Its weight is $12$. The relation
\begin{equation*}
  \mathcal{S}_{2k}(\Gamma)=\Delta\mathcal{M}_{2k-12}(\Gamma)
\end{equation*}
characterises the spaces of cusp forms. Furthermore, we have
\begin{equation*}
  \mathcal{M}_{2k}(\Gamma)=\mathbb{C}E_{2k}\oplus\mathcal{S}_{2k}(\Gamma).
\end{equation*}
This decomposition is used in Section~\ref{sec:posit-coeff} to split forms into
an \emph{Eisenstein part} (a form with non-vanishing constant coefficient, for
instance a multiple of $E_{2k}$) and a cusp form.

Klein's modular function
\begin{equation}\label{eq:klein-j}
  j(z)=\frac{E_4(z)^3}{\Delta(z)}
\end{equation}
generates the field of all modular functions (forms of weight $0$)
\begin{equation*}
  \left\{f:\mathbb{H}\to\mathbb{C}\mid \forall\gamma\in\Gamma:
    f(\gamma z)=f(z)\quad\text{and }f\quad\text{meromorphic}\right\}=
  \mathbb{C}(j).
\end{equation*}
This fact is expressed by calling $j$ a \emph{Hauptmodul} for $\Gamma$.

\subsection{Modular forms for $\Gamma(2)$, Theta series}
\label{sec:modular-forms-gamma2}
Powers of the Jacobi theta functions
\begin{align*}
  \theta_{00}(z)&=\sum_{n=-\infty}^\infty e^{\pi in^2z}\\
  \theta_{01}(z)&=\sum_{n=-\infty}^\infty (-1)^ne^{\pi in^2z}\\
  \theta_{10}(z)&=\sum_{n=-\infty}^\infty e^{\pi i(n+\frac12)^2z}
\end{align*}
are examples of modular forms for $\Gamma(2)$. From these definitions it
follows that
\begin{equation}
  \label{eq:theta4}
  \begin{split}
    \theta_{00}(z)^4&=1+\sum_{n=1}^\infty r_4(n)q^{\frac n2}\\
    \theta_{01}(z)^4&=1+\sum_{n=1}^\infty (-1)^nr_4(n)q^{\frac n2},
  \end{split}
\end{equation}
where
\begin{equation} \label{r4def}
r_{4}(k) := |\{\mathbf{x}\in\mathbb{Z}^4\mid\|\mathbf{x}\|^2=k\}|,
\end{equation} 
denotes the number of possibilities to express $n$ as a sum of four squares.
Jacobi's famous four-square theorem gives the following representation of
$r_4(k)$
\begin{equation}\label{r4sigma1}r_{4}(k) =
    8\sigma_{1}(k) - 32\sigma_{1}\left(\frac{k}{4}\right),
\end{equation}
where we use the convention for arithmetic functions that they are defined to
be $0$ for non-integer arguments.

The $\theta$-functions satisfy the transformation formulas
\begin{equation}
\begin{aligned}
  \theta_{00}(Tz)^4&=\theta_{01}(z)^4,&\quad
  z^{-2}\theta_{00}(Sz)^4&=-\theta_{00}(z)\\
  \theta_{01}(Tz)^4&=\theta_{00}(z)^4,&\quad
z^{-2}\theta_{01}(Sz)^4&=-\theta_{10}(z)^4\\
\theta_{10}(Tz)^4&=-\theta_{10}(z)^4,&\quad
z^{-2}\theta_{10}(Sz)^4&=-\theta_{01}(z)^4
\end{aligned}\label{eq:theta}
\end{equation}
and Jacobi's famous relation
\begin{equation}\label{eq:jacobi1}
  \theta_{01}(z)^4+\theta_{10}(z)^4=\theta_{00}(z)^4.
\end{equation}

The modular $\lambda$-function
\begin{equation}\label{eq:lambda-theta}
  \lambda(z)=\frac{\theta_{10}(z)^4}{\theta_{00}(z)^4}=
  \frac{\theta_{10}(z)^4}{\theta_{01}(z)^4+\theta_{10}(z)^4}
\end{equation}
is a Hauptmodul for $\Gamma(2)$ and satisfies
\begin{equation}
  \label{eq:j-lambda}
  j=256\frac{(1-\lambda+\lambda^2)^3}{\lambda^2(1-\lambda)^2}.
\end{equation}
The fact that
\begin{equation*}
  \left[\mathbb{C}(\lambda):\mathbb{C}(j)\right]=6
\end{equation*}
is used in Section~\ref{sec:odd}. The following transformation formulas follow
from \eqref{eq:theta}
\begin{equation}
  \label{eq:lambda-ST}
  \begin{split}
    \lambda(Tz)&=\frac{\lambda(z)}{\lambda(z)-1}\\
    \lambda(Sz)&=1-\lambda(z).
  \end{split}
\end{equation}

The function $\lambda$ is holomorphic on $\mathbb{H}$, attains the
value $1$ at $z=0$, and has no zeros in $\mathbb{H}$.  Hence, we may define
a holomorphic logarithm of $\lambda$ by
\begin{equation} \label{loglambda}
  \llambda(z) : = 2\pi i\int^{z}_{0}\frac{\lambda'(w)}{\lambda(w)}\,dw= \pi
  i\int_0^z\theta_{01}^4(w)\,dw,
  \noindent \end{equation}
where the second equation follows from \eqref{eq:lambda'}. Notice that
$\llambda(it)\in\mathbb{R}_{<0}$ for $t\in\mathbb{R}_{>0}$.
We observe via direct computation with the contour integral and the properties
of $\lambda$ that:
   \begin{align}
\log\lambda(T^{2}z) & = \log\lambda(z) + 2\pi i\\ \label{logform2}
2\log\lambda(Sz) & = \log\lambda(T^{-1}z) - 2\log\lambda(z) + \log\lambda(Tz).
\end{align}
Notice that these equations imply 
\begin{equation}
  \label{eq:log-lambda}
  \log\lambda(z)=\log\lambda(Tz)+\log\lambda(Sz)+\pi i,
\end{equation}
which is used in Section~\ref{sec:odd}.

Using the second equality of \eqref{loglambda} and \eqref{eq:theta4} we obtain
the following expansion of $\log\lambda$ at the cusp $i\infty$:
\begin{equation} \label{logexp}
  \log\lambda(z) = \pi iz + 4\log(2) +
  \sum_{k=1}^{\infty}(-1)^{k}\frac{r_{4}(k)}{k}q^{\frac{k}{2}}, 
\end{equation}
where $r_{4}$ is defined in \eqref{r4def}.  Then \eqref{logform2},
\eqref{logexp} together with \eqref{r4sigma1} give the expansion
\begin{equation}
\label{logexps}
\log\lambda(Sz) =
-16\sum_{k=0}^{\infty}\frac{\sigma_{1}(2k+1)}{2k+1}q^{k+\frac{1}{2}}.
\end{equation}

\subsection{Derivatives of modular
  forms and quasimodular forms}\label{sec:quasimodular-forms}
The ring of modular forms is not closed under differentiation, which can be
seen by differentiating \eqref{eq:modular}. Notice that it is common and
convenient in the context of modular forms to set
$f'=\frac1{2\pi i}\frac{df}{dz}=q\frac{df}{dq}$.  By adjoining the Eisenstein
series of weight $2$
\begin{equation}
  \label{eq:E2}
    E_2(z)=1-24\sum_{n=1}^\infty \sigma_1(n)q^n,
  \end{equation}
  which satisfies the transformation formula
\begin{equation}
  \label{eq:E2S}
  z^{-2}E_2(Sz)=E_2(z)+\frac6{\pi iz},
\end{equation}
we obtain the ring $\mathbb{C}[E_2,E_4,E_6]$ of quasimodular forms, which is
closed under differentiation. This can be seen from
Ramanujan's identities
\begin{equation}
  \label{eq:ramanujan}
  \begin{split}
    E_2'&=\frac1{12}\left(E_2^2-E_4\right)\\
    E_4'&=\frac13\left(E_2E_4-E_6\right)\\
    E_6'&=\frac12\left(E_2E_6-E_4^2\right).
  \end{split}
\end{equation}
Furthermore, we have
\begin{equation*}
  j'=-\frac{E_4^2E_6}\Delta=-\frac{E_6}{E_4}j
\end{equation*}
and
\begin{equation}
  \label{eq:lambda'}
  \lambda'(z)=\frac12\theta_{01}^4(z)\lambda(z),
\end{equation}
as well as
\begin{equation}
  \label{eq:theta'}
  \begin{split}
    \left(\theta_{00}^4\right)'&=\frac16\left(E_2\theta_{00}^4
      -\theta_{01}^8+\theta_{10}^8\right)\\
    \left(\theta_{01}^4\right)'&=\frac16\left(E_2\theta_{01}^4
    -\theta_{01}^8-2\theta_{01}^4\theta_{10}^4\right)\\
    \left(\theta_{10}^4\right)'&=\frac16\left(E_2\theta_{10}^4
      +2\theta_{01}^4\theta_{10}^4+\theta_{10}^8\right).
  \end{split}
\end{equation}

A quasimodular form $f$ of weight $2k$ can be written as
\begin{equation}
  \label{eq:quasi}
  f(z)=\sum_{\ell=0}^k E_2^\ell(z)f_{2k-2\ell}(z),
\end{equation}
where $f_{2k-2\ell}$ is a modular form of weight $2k-2\ell$; the term for
$\ell=k-1$ is of course trivial. Quasimodular forms are invariant under $T$
and transform under $S$ by
\begin{equation*}
  z^{-2k}f(Sz)=\sum_{m=0}^k\left(\frac6{\pi iz}\right)^m
  \sum_{\ell=0}^{k-m}\binom {m+\ell}\ell E_2^{\ell}(z)f_{2k-2\ell-2m}(z).
\end{equation*}
Notice that the terms
\begin{equation*}
  \sum_{\ell=0}^{k-m}\binom {m+\ell}\ell E_2^{\ell}(z)f_{2k-2\ell-2m}(z)
\end{equation*}
are quasimodular forms of weight $2k-2m$. The largest value $\ell$, for which
$f_{2k-2\ell}$ in \eqref{eq:quasi} is non-zero is called the depth of the
quasimodular form.

The derivative of a quasimodular form of weight $2k$ and depth $s$ is a
quasimodular form of weight $2k+2$ and depth at most $s+1$. In particular, the
derivative of a modular form of weight $2k$ is a quasimodular form of weight
$2k+2$ and depth $1$ for $k>0$. 


\begin{ackno}
  This material is based upon work supported by the National Science Foundation
  under Grant No. DMS-1439786 while the authors were in residence at the
  Institute for Computational and Experimental Research in Mathematics in
  Providence, RI, during the
  Spring 2018 semester.\\
  The second author is grateful to Masanobu Kaneko for very helpful email
  conversations and for providing the Master thesis
  \cite{Yamashita2010:construction_extremal_quasimodular}.\\
  The authors are grateful to the anonymous referee for her/his valuable
  suggestions which increased the quality of this paper.
\end{ackno}

\bibliographystyle{amsplain}
\bibliography{hermite}

\providecommand{\bysame}{\leavevmode\hbox to3em{\hrulefill}\thinspace}
\providecommand{\MR}{\relax\ifhmode\unskip\space\fi MR }
\providecommand{\MRhref}[2]{%
  \href{http://www.ams.org/mathscinet-getitem?mr=#1}{#2}
}
\providecommand{\href}[2]{#2}
\begin{thebibliography}{10}

\bibitem{Aste_Weaire2008:pursuit_perfect_packing}
T.~Aste and D.~Weaire, \emph{The pursuit of perfect packing}, second ed.,
  Taylor \& Francis, New York, 2008.

\bibitem{Boas1954:entire_functions}
R.~P. Boas, Jr., \emph{{E}ntire functions}, Academic Press Inc., New York,
  1954.

\bibitem{Bourgain_Clozel_Kahane2010:principe_dheisenberg}
J.~Bourgain, L.~Clozel, and J.-P. Kahane, \emph{Principe d'{H}eisenberg et
  fonctions positives}, Ann. Inst. Fourier (Grenoble) \textbf{60} (2010),
  no.~4, 1215--1232.

\bibitem{Bruinier_Geer_Harder+2008:1_2_3-modular}
J.~H. Bruinier, G.~van~der Geer, G.~Harder, and D.~Zagier, \emph{{T}he 1-2-3 of
  modular forms}, Universitext, Springer-Verlag, Berlin, 2008, Lectures from
  the Summer School on Modular Forms and their Applications held in
  Nordfjordeid, June 2004, Edited by Kristian Ranestad.

\bibitem{Choie_Lee2019:jacobi_like_forms}
Y.~J. Choie and M.~H. Lee, \emph{{J}acobi-{L}ike {F}orms, {P}seudodifferential
  {O}perators, and {Q}uasimodular {F}orms}, Monographs in Mathematics, Springer
  International Publishing, 2019.

\bibitem{Cohen_Stroemberg2017:modular_forms}
H.~Cohen and F.~Str{\"o}mberg, \emph{Modular forms}, Graduate Studies in
  Mathematics, vol. 179, American Mathematical Society, Providence, RI, 2017, A
  classical approach.

\bibitem{Cohn2002:new_upper_bounds}
H.~Cohn, \emph{{N}ew upper bounds on sphere packings. {II}}, Geom. Topol.
  \textbf{6} (2002), 329--353.

\bibitem{Cohn2017:conceptual_breakthrough_packing}
\bysame, \emph{A conceptual breakthrough in sphere packing}, Notices Amer.
  Math. Soc. \textbf{64} (2017), no.~2, 102--115.

\bibitem{Cohn_Elkies2003:new_upper_bounds}
H.~Cohn and N.~Elkies, \emph{New upper bounds on sphere packings. {I}}, Ann. of
  Math. (2) \textbf{157} (2003), no.~2, 689--714.

\bibitem{Cohn_Goncalves2019:optimal_uncertainty_principle}
H.~Cohn and F.~Gon\c{c}alves, \emph{{A}n optimal uncertainty principle in
  twelve dimensions via modular forms}, Invent. Math. \textbf{217} (2019),
  no.~3, 799--831.

\bibitem{Cohn-Kumar-Miller+2017:sphere_packing_24}
H.~Cohn, A.~Kumar, S.~D. Miller, D.~Radchenko, and M.~Viazovska, \emph{{T}he
  sphere packing problem in dimension 24}, Ann. of Math. (2) \textbf{185}
  (2017), no.~3, 1017--1033.

\bibitem{Cohn_Kumar_Miller+2019:universal_optimality}
\bysame, \emph{{U}niversal optimality of the {$E_8$} and {L}eech lattices and
  interpolation formulas}, \url{https://arxiv.org/abs/1902.05438}, Feb 2019.

\bibitem{Laat_Vallentin2016:breakthrough_sphere_packing}
D.~de~Laat and F.~Vallentin, \emph{{A} breakthrough in sphere packing: the
  search for magic functions}, Nieuw Arch. Wiskd. (5) \textbf{17} (2016),
  no.~3, 184--192, Includes an interview with Henry Cohn, Abhinav Kumar,
  Stephen D. Miller and Maryna Viazovska.

\bibitem{Deligne1974:la_conjecture_weil}
P.~Deligne, \emph{{L}a conjecture de {W}eil}, Publ. Math. I.H.E.S. \textbf{43}
  (1974), 273--307.

\bibitem{Diamond_Shurman2005:first_course_modular}
F.~Diamond and J.~Shurman, \emph{A first course in modular forms}, Graduate
  Texts in Mathematics, vol. 228, Springer-Verlag, New York, 2005.

\bibitem{Fejes1943:uber_dichteste_kugellagerung}
L.~Fejes, \emph{\"{U}ber die dichteste {K}ugellagerung}, Math. Z. \textbf{48}
  (1943), 676--684.

\bibitem{Goncalves_Oliveira_Steinerberger2017:hermite_polynomials_linear}
F.~Gon\c{c}alves, D.~Oliveira~e Silva, and S.~Steinerberger, \emph{Hermite
  polynomials, linear flows on the torus, and an uncertainty principle for
  roots}, J. Math. Anal. Appl. \textbf{451} (2017), no.~2, 678--711.

\bibitem{Grabner2020:quasimodular_forms}
P.~J. Grabner, \emph{{Q}uasimodular forms as solutions of modular differential
  equations}, Int. J. Number Theory (2020), to appear, available at
  \url{https://arxiv.org/abs/2002.02736}.

\bibitem{Hales2005:proof_kepler_conjecture}
T.~C. Hales, \emph{A proof of the {K}epler conjecture}, Ann. of Math. (2)
  \textbf{162} (2005), no.~3, 1065--1185.

\bibitem{Iwaniec1997:topics_classical_automorphic}
H.~Iwaniec, \emph{{T}opics in classical automorphic forms}, Graduate Studies in
  Mathematics, vol.~17, American Mathematical Society, Providence, RI, 1997.

\bibitem{Jenkins_Rouse2011:bounds_coefficients_cusp}
P.~Jenkins and J.~Rouse, \emph{Bounds for coefficients of cusp forms and
  extremal lattices}, Bull. Lond. Math. Soc. \textbf{43} (2011), no.~5,
  927--938.

\bibitem{Kabatjanskii_Levenstein1978:bounds_for_packings}
G.~A. Kabatjanski\u{\i} and V.~I. Leven\v{s}te\u{\i}n, \emph{Bounds for
  packings on the sphere and in space}, Problemy Pereda\v{c}i Informacii
  \textbf{14} (1978), no.~1, 3--25.

\bibitem{Kaneko_Koike2006:extremal_quasimodular_forms}
M.~Kaneko and M.~Koike, \emph{{O}n extremal quasimodular forms}, Kyushu J.
  Math. \textbf{60} (2006), no.~2, 457--470.

\bibitem{Kaneko_Nagatomo_Sakai2017:third_order_modular}
M.~Kaneko, K.~Nagatomo, and Y.~Sakai, \emph{{T}he third order modular linear
  differential equations}, J. Algebra \textbf{485} (2017), 332--352.

\bibitem{Kaneko_Zagier1995:generalized_jacobi_theta}
M.~Kaneko and D.~Zagier, \emph{{A} generalized {J}acobi theta function and
  quasimodular forms}, The moduli space of curves ({T}exel {I}sland, 1994),
  Progr. Math., vol. 129, Birkh\"{a}user Boston, Boston, MA, 1995,
  pp.~165--172.

\bibitem{Paley_Wiener1934:fourier_transformations}
R.~E. A.~C. Paley and N.~Wiener, \emph{{F}ourier transformations in the complex
  domain}, AMS Colloquium Publications, vol. XIX, American Mathematical
  Society, New York, 1934.

\bibitem{Radchenko_Viazovska2019:fourier_interpolation}
D.~Radchenko and M.~Viazovska, \emph{Fourier interpolation on the real line},
  Publ. Math. Inst. Hautes \'{E}tudes Sci. \textbf{129} (2019), 51--81.

\bibitem{Rolen_Wagner2020:note_schwartz_functions}
L.~Rolen and I.~Wagner, \emph{{A} note on {S}chwartz functions and modular
  forms}, Arch. Math. (2020), to appear.

\bibitem{Royer2012:quasimodular_forms_introduction}
E.~Royer, \emph{{Q}uasimodular forms: an introduction}, Ann. Math. Blaise
  Pascal \textbf{19} (2012), no.~2, 297--306.

\bibitem{Shimura2012:modular_forms_basics}
G.~Shimura, \emph{Modular forms: basics and beyond}, Springer Monographs in
  Mathematics, Springer, New York, 2012.

\bibitem{Stein2007:modular_forms_computational}
W.~Stein, \emph{Modular forms, a computational approach}, Graduate Studies in
  Mathematics, vol.~79, American Mathematical Society, Providence, RI, 2007,
  With an appendix by Paul E. Gunnells.

\bibitem{Viazovska2017:sphere_packing_8}
M.~S. Viazovska, \emph{{T}he sphere packing problem in dimension 8}, Ann. of
  Math. (2) \textbf{185} (2017), no.~3, 991--1015.

\bibitem{Widder1941:laplace_transform}
D.~V. Widder, \emph{{T}he {L}aplace {T}ransform}, Princeton Mathematical
  Series, v. 6, Princeton University Press, Princeton, N. J., 1941.

\bibitem{Yamashita2010:construction_extremal_quasimodular}
T.~Yamashita, \emph{{O}n a construction of extremal quasimodular forms of depth
  two}, Master's thesis, Tsukuba University, 2010, Japanese.

\bibitem{Zagier1994:modular_forms_differential}
D.~Zagier, \emph{{M}odular forms and differential operators}, Proc. Indian
  Acad. Sci. Math. Sci. \textbf{104} (1994), no.~1, 57--75, K. G. Ramanathan
  memorial issue.

\bibitem{Zagier2008:elliptic_modular_forms}
\bysame, \emph{{E}lliptic modular forms and their applications}, in \emph{The
  1-2-3 of modular forms} \cite{Bruinier_Geer_Harder+2008:1_2_3-modular},
  pp.~1--103.

\end{thebibliography}
\end{document}